\numberwithin{equation}{section} 
\newtheoremstyle{italic}
{5pt}
{5pt}
{\itshape}
{}
{}
{}
{.5em}
{\bfseries{\thmname{#1}~\thmnumber{#2}.}\thmnote{~\textnormal{(#3)}}}
\newtheoremstyle{upright}
{5pt}
{5pt}
{\upshape}
{}
{\bfseries}
{}
{.5em}
{\bfseries{\thmname{#1}~\thmnumber{#2}.}\thmnote{~\textnormal{(\textit{#3}\textrm{)}}}}
\theoremstyle{italic}
\newtheorem{theorem}{Theorem}[section]
\newtheorem{lemma}[theorem]{Lemma}
\newtheorem{proposition}[theorem]{Proposition}
\theoremstyle{upright}
\newtheorem{remark}[theorem]{Remark}
\newcommand\blfootnote[1]{%
  \begingroup%
  \renewcommand\thefootnote{}\footnote{#1}%
  \addtocounter{footnote}{-1}%
  \endgroup%
}%
\newcommand{\euler}{\mathrm{e}}
\newcommand{\BigO}{\mathrm{O}}
\newcommand{\defi}{\coloneqq}
\newcommand{\ifed}{\eqqcolon}
\newcommand{\RealPart}[1]{\mathrm{Re}(#1)}
\newcommand{\complexunit}{\mathrm{i}}
\newcommand{\ind}[1]{\mathds{1}\{#1\}} 
\newcommand{\dinf}{\textup{d}} 
\newcommand{\Nat}{\mathbb{N}} 
\newcommand{\Complex}{\mathbb{C}} 
\NewDocumentCommand \E { m g }{%
    \IfNoValueTF{#2}
        {\mathbb{E}[#1]}
        {\mathbb{E}_{#1}[#2]} 
    }%
\NewDocumentCommand \Efxd { m g }{%
    \IfNoValueTF{#2}
        {\mathbb{E}\Bigl[#1\Bigr]}
        {\mathbb{E}_{#1}\!\Bigl[#2\Bigr]} 
    }%
\NewDocumentCommand \Prob { m g }{%
    \IfNoValueTF{#2}
        {\mathbb{P}(#1)}
        {\mathbb{P}_{#1}(#2)} 
    }%
\newcommand{\bld}[1]{\mathbf{#1}} 
\NewDocumentCommand \eb { m g }{%
    \IfNoValueTF{#2}
        {\bld{e}_{#1}}
        {\bld{e}^{(#1)}_{#2}} 
    }%
\NewDocumentCommand \zerob { g }{%
    \IfNoValueTF{#1}
        {\bld{0}}
        {\bld{0}^{(#1)}} 
    }%
\newcommand{\la}{\lambda} 
\newcommand{\al}{\alpha}
\newcommand{\La}{\Lambda}
\newcommand{\statespace}{\mathbb{S}} 
\newcommand{\ori}{{(0,0)}} 
\newcommand{\lvli}[1]{U_{#1}} 
\newcommand{\lvlb}[1]{L_{#1}} 
\newcommand{\lvlbunion}[1]{C_{#1}} 
\newcommand{\ph}[1]{P_{#1}} 
\newcommand{\mtrx}[1]{\mathbf{#1}} 
\newcommand{\diag}[1]{\operatorname{diag}(#1)} 
\newcommand{\vc}[1]{\mathbf{#1}} 
\newcommand{\alt}[1]{\tilde{#1}} 
\newcommand{\ta}{\alt{\tau}} 
\newcommand{\na}{\alt{\eta}} 
\newcommand{\rp}[1]{\frac{q(\alt{X}_{#1},\alt{X}_{#1 - 1})}{\alt{q}(\alt{X}_{#1 - 1},\alt{X}_{#1})} } 
\newcommand{\rone}{v} 
\newcommand{\Rone}{V} 
\NewDocumentCommand \rtwo { m g g }{
    \IfNoValueTF{#2}
        {w_{#1}}
        {w^{(#1)}_{#2}(#3)} 
    }%
\newcommand{\Rtwo}{W} 
\title{Time-dependent analysis of an $M/M/c$ preemptive priority system with two priority classes}%
\author{Jori Selen\footnotemark[1] \footnotemark[2]~~and Brian Fralix\footnotemark[3]}%
\begin{document}%

\maketitle%

\renewcommand{\thefootnote}{\fnsymbol{footnote}}%
\footnotetext[1]{Department of Mathematics and Computer Science, Eindhoven University of Technology}%
\footnotetext[2]{Department of Mechanical Engineering, Eindhoven University of Technology}%
\footnotetext[3]{Department of Mathematical Sciences, Clemson University}%
\blfootnote{E-mail address: {\tt j.selen@tue.nl}}%
\renewcommand{\thefootnote}{\arabic{footnote}} \setcounter{footnote}{0}%

\begin{abstract}%
We analyze the time-dependent behavior of an $M/M/c$ priority queue having two customer classes, class-dependent service rates, and preemptive priority between classes. More particularly, we develop a method that determines the Laplace transforms of the transition functions when the system is initially empty. The Laplace transforms corresponding to states with at least $c$ high-priority customers are expressed explicitly in terms of the Laplace transforms corresponding to states with at most $c - 1$ high-priority customers. We then show how to compute the remaining Laplace transforms recursively, by making use of a variant of Ramaswami's formula from the theory of $M/G/1$-type Markov processes. While the primary focus of our work is on deriving Laplace transforms of transition functions, analogous results can be derived for the stationary distribution: these results seem to yield the most explicit expressions known to date.
\end{abstract}%

\section{Introduction}%
\label{sec:introduction}%

Priority models with multiple servers constitute an important class of queueing systems, having applications in areas as diverse as manufacturing, wireless communication and the service industry. Studies of these models date back to at least the 1950's (see, e.g., Cobham \cite{Cobham1954_Priority_assignment}, Davis \cite{Davis1966_M_n-M-c_N-class_prio_waiting_time_distributions}, and Jaiswal \cite{Jaiswal1961_Preemptive_resume_priority,Jaiswal1968_Priority_queues}) yet many properties of these systems still do not appear to be well understood: recent work addressing priority models include Sleptchenko et al.~\cite{Sleptchenko2015_M-M-1_N-class_prio}, and Wang et al.~\cite{Wang2015_M-M-c_2-class_prio}. We refer the reader to~\cite{Wang2015_M-M-c_2-class_prio} for more specific examples of applications of priority queueing models.

Our contribution to this stream of literature is an analysis of the time-dependent behavior of a Markovian multi-server queue with two customer classes, class-dependent service rates and preemptive priority between classes. To the best of our knowledge, the joint stationary distribution of the $M/M/1$ 2-class preemptive priority system was first studied in Miller \cite{Miller1981_M-M-1_2-class_prio_matrix-geometric}, who makes use of matrix-geometric methods to study the joint stationary distribution of the number of high- and low-priority customers in the system. More particularly, in \cite{Miller1981_M-M-1_2-class_prio_matrix-geometric} this queueing system is modeled as a quasi-birth--and--death (QBD) process having infinitely many levels, with each level containing infinitely many phases. Miller then shows how to recursively compute the elements of the rate matrix of this QBD process: once enough elements of this rate matrix have been found, the joint stationary distribution can be approximated by appropriately truncating this matrix.

This single-server model is featured in many works that have recently appeared in the literature. In Sleptchenko et al.~\cite{Sleptchenko2015_M-M-1_N-class_prio} an exact, recursive procedure is given for computing the joint stationary distribution of an $M/M/1$ preemptive priority queue that serves an arbitrary finite number of customer classes. The $M/M/1$ 2-class priority model is briefly discussed in Katehakis et al.~\cite{Katehakis2015_Successive_lumping_lattice_path_counting}, where they explain how the Successive Lumping technique can be used to study $M/M/1$ 2-class priority models when both customer classes experience the same service rate.  Interesting asymptotic properties of the stationary distribution of the $M/M/1$ 2-class preemptive priority model can be found in the work of Li and Zhao \cite{Li2009_M-M-1_2-class_prio_tail_asymptotics}.

Multi-server preemptive priority systems with two customer classes have also received some attention in the literature. One of the earlier references allowing for different service requirements between customer classes is Gail et al.~\cite{Gail1992_M_n-M_n-c_2-class_prio_GF}, see also the references therein. In \cite{Gail1992_M_n-M_n-c_2-class_prio_GF}, the authors derive the generating function of the joint stationary probabilities by expressing it in terms of the stationary probabilities associated with states where there is no queue. A combination of a generating function approach and the matrix-geometric approach is used in Sleptchenko et al.~\cite{Sleptchenko2005_M-M-c_2-class_prio_matrix-geometric_GF} to compute the joint stationary distribution of an $M/M/c$ 2-class preemptive priority queue. The $M/PH/c$ queue with an arbitrary number of preemptive priority classes is studied in Harchol-Balter et al.~\cite{Harchol-Balter2005_M-PH-c_N-class_prio} using a Recursive Dimensionality Reduction technique that leads to an accurate approximation of the mean sojourn time per customer class. Furthermore, in Wang et al.~\cite{Wang2015_M-M-c_2-class_prio} the authors present a procedure for finding, for an $M/M/c$ 2-class priority model, the generating function of the distribution of the number of low-priority customers present in the system in stationarity.

Our work deviates from all of the above approaches, in that we construct a procedure for computing the Laplace transforms of the transition functions of the $M/M/c$ 2-class preemptive priority model. Our method first makes use of a slight tweak of the \textit{clearing analysis on phases} (CAP) method featured in Doroudi et al.~\cite{Doroudi2015_CAP}, in that we show how CAP can be modified to study Laplace transforms of transition functions. The specific dynamics of our priority model allow us to take the analysis a few steps further, by showing each Laplace transform can be expressed \textit{explicitly} in terms of transforms corresponding to states contained within a strip of states that is infinite in only one direction. Finally, we show how to compute these remaining transforms recursively, by making use of a slight modification of Ramaswami's formula \cite{Ramaswami1988_Matrix-analytic_stable_recursion}. While the focus of our work is on Laplace transforms of transition functions, analogous results can be derived for the stationary distribution of the $M/M/c$ 2-class preemptive priority model as well. We are not aware of any studies that obtain explicit expressions for the Laplace transforms of the transition functions, or even the stationary distribution, as we do here: these results seem to yield the most explicit expressions known to date.

The Laplace transforms we derive can easily be numerically inverted to retrieve the transition functions with the help of the algorithms of Abate and Whitt \cite{Abate1995_LT_inversion_probability_distributions,Abate2006_LT_inversion_unified_framework} or den Iseger \cite{denIseger2006_Transform_inversion}. These transition functions can be used to study---as a function of time---key performance measures such as the mean number of customers of each priority class in the system; the mean total number of customers in the system; or the probability that an arriving customer has to wait in the queue. The time-dependent performance measures can, for example, be used to analyze and dimension priority systems when one is interested in the behavior of such systems over a finite time horizon. Using the equilibrium distribution as an approximation of the time-dependent behavior to dimension the system can result in either over- or underdimensioning, which can lead to poor performance. So, our method yields a way of understanding the time-dependent behavior of multi-server priority queues. This type of behavior cannot be analyzed using any methods found in previous work pertaining to this system: until now, one would have to resort to simulation in order to study the time-dependent behavior. Having explicit expressions for the Laplace transforms of the transition functions greatly simplifies the computation of some performance measures: for instance, these transforms yield explicit expressions for the Laplace transforms of the distribution of the number of low-priority customers in the system at time $t$.

We now present some numerical examples of the time-dependent performance measures, where we will make use of the notation introduced in Section~\ref{sec:model_description}. In Figure~\ref{fig:transient_mean_number_class-1_customers}, we plot the mean number of low-priority customers in the system as a function of time. Similarly, in Figure~\ref{fig:transient_delay_probability} we plot the time-dependent delay probabilities for each priority class. The Laplace transforms used to obtain Figures~\ref{fig:transient_mean_number_class-1_customers} and \ref{fig:transient_delay_probability} can be computed numerically using the approach discussed in Section~\ref{subsec:numerical_implementation}: here we used an error tolerance of $\epsilon = 10^{-8}$. Once these transforms have been found, numerical inversion can be done via the Euler summation algorithm of \cite{Abate1995_LT_inversion_probability_distributions} where we again used an error tolerance of $10^{-8}$. From Figure~\ref{fig:transient_mean_number_class-1_customers} we can also informally derive the mixing times of each scenario. It seems that the mixing time vastly increases with an increase in the load. As expected, in Figure~\ref{fig:transient_delay_probability} we see that the delay probability of a high-priority customer is much lower than the delay probability of a low-priority customer. Furthermore, as time passes, the delay probability of the high-priority customer tends to the delay probability in an $M/M/c$ queue with only high-priority customers. Finally, in Table~\ref{tbl:computation_time} we show the computation times of the algorithm, which was implemented in Matlab and run on a 64-bit desktop with an Intel Core i7-3770 processor. The computation time scales reasonably well with the number of servers and therefore the algorithm can be used to evaluate any practical instance.

\begin{figure}%
\centering%
\includegraphics{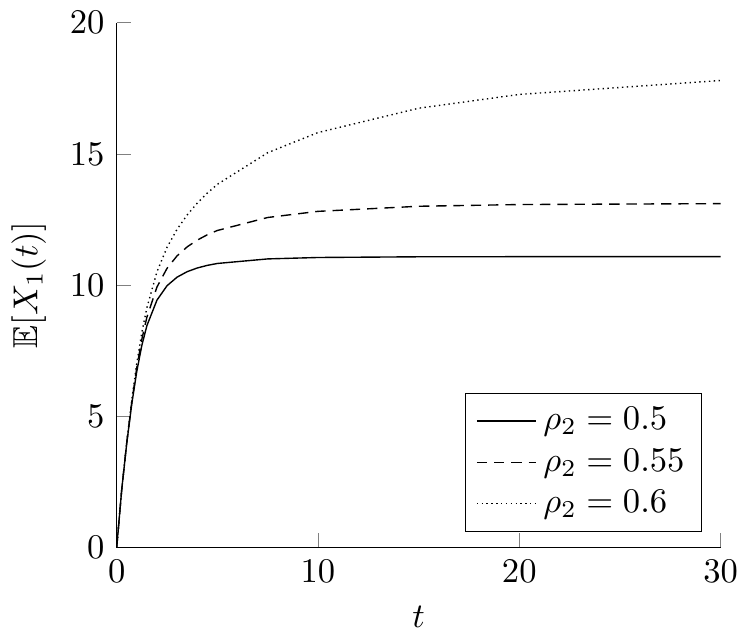}%
\caption{Mean number of low-priority customers in the system as a function of time for increasing load of the high-priority customers. Parameter settings are $c = 10$, $\rho_1 = 1/3$, and $\rho_2$ varies.}%
\label{fig:transient_mean_number_class-1_customers}%
\end{figure}%

\begin{figure}%
\centering%
\includegraphics{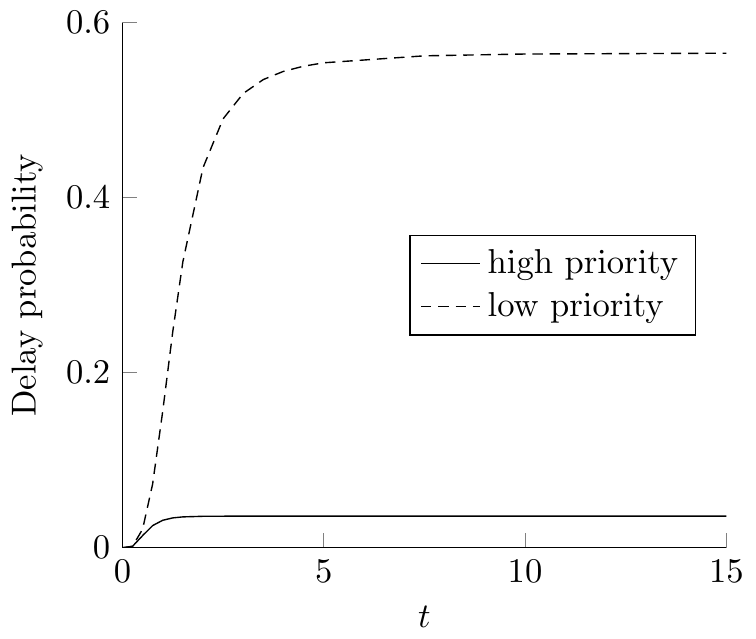}%
\caption{Probability that an arriving customer has to wait in the queue as a function of time for the two priority classes. Parameter settings are $c = 10$, $\rho_1 = 1/3$, and $\rho_2 = 1/2$.}%
\label{fig:transient_delay_probability}%
\end{figure}%

\begin{table}
\centering%
\begin{tabular}{c|*{6}{c}}%
number of servers        & 10   & 20   & 30  & 50  & 70  & 100 \\
\hline
stationary probabilities & 0.35 & 0.66 & 1.1 & 2.9 & 7.1 & 21  \\
Laplace transforms       & 0.52 & 1.3  & 2.7 & 8.7 & 23  & 62
\end{tabular}%
\caption{Computation time in seconds required to calculate, for all states in $\statespace_k$, the stationary probabilities or the Laplace transforms for a specific $\al = 1/2 + 1/2 \complexunit$. We use the numerical implementation outlined in Section~\ref{subsec:numerical_implementation} with accuracies $\epsilon = 10^{-8}$. Parameter settings are $\rho_1 = 1/3$, $\rho_2 = 1/2$ and $c$ varies.}%
\label{tbl:computation_time}%
\end{table}

This paper is organized as follows. Section~\ref{sec:model_description} describes both the $M/M/c$ 2-class preemptive priority queueing system, as well as the two-dimensional Markov process used to model the dynamics of this system. In the same section we introduce relevant notation and terminology, and detail the outline of the approach. In Sections~\ref{sec:modification_of_CAP}--\ref{sec:horizontal_boundary} we describe this approach for calculating the Laplace transforms of the transition functions. We discuss the simplifications in the single-server case in Section~\ref{sec:single-server_case}. In Section~\ref{sec:conclusion} we summarize our contributions and comment on the derivation of the stationary distribution. The appendices provide supporting results on combinatorial identities and single-server queues used in deriving the expressions for the Laplace transforms.


\section{Model description and outline of approach}%
\label{sec:model_description}%

We consider a queueing system consisting of $c$ servers, where each server processes work at unit rate. This system serves customers from two different customer classes, referred to here as class-1 and class-2 customers. The class index indicates the priority rank, meaning that among the servers, class-2 customers have preemptive priority over class-1 customers in service. Recall that the term `preemptive priority' means that whenever a class-2 customer arrives to the system, one of the servers currently serving a class-1 customer immediately drops that customer and begins serving the new class-2 arrival, and the dropped class-1 customer waits in the system until a server is again available to receive further processing, i.e., the priority rule is preemptive resume. Therefore, if there are currently $i$ class-1 customers and $j$ class-2 customers in the system, the number of class-2 customers in service is $\min(c,j)$, while the number of class-1 customers in service is $\max(\min(i,c - j),0)$.

Class-$n$ customers arrive in a Poisson manner with rate $\la_n$, $n = 1,2$, and the Poisson arrival processes of the two populations are assumed to be independent. Each class-$n$ arrival brings an exponentially distributed amount of work with rate $\mu_n$, independently of everything else. We denote the total arrival rate by $\la \defi \la_1 + \la_2$, the load induced by class-$n$ customers as $\rho_n \defi \la_n/(c \mu_n)$, and the load induced by both customer classes as $\rho \defi \rho_1 + \rho_2$.

The dynamics of this queueing system can be described with a continuous-time Markov chain (CTMC). For each $t \ge 0$, let $X_n(t)$ represent the number of class-$n$ customers in the system at time $t$, and define $X(t) \defi (X_1(t),X_2(t))$. Then, $X \defi \{ X(t) \}_{t \ge 0}$ is a CTMC on the state space $\statespace = \Nat_0^2$. Given any two distinct elements $x,y \in \statespace$, the element $q(x,y)$ of the transition rate matrix $\mtrx{Q}$ associated with $X$ denotes the transition rate from state $x$ to state $y$. The row sums of $\mtrx{Q}$ are 0, meaning for each $x \in \statespace$, $q(x,x) = - \sum_{y \neq x} q(x,y) \ifed - q(x)$, where $q(x)$ represents the rate of each exponential sojourn time in state $x$. For our queueing system, the non-zero transition rates of $\mtrx{Q}$ are given by
\begin{align*}%
q((i,j),(i + 1,j)) &= \la_1, \quad i,j \ge 0, \\
q((i,j),(i,j + 1)) &= \la_2, \quad i,j \ge 0, \\
q((i,j),(i - 1,j)) &= \max(\min(i,c - j),0) \mu_1, \quad i \ge 1, ~ j \ge 0, \\
q((i,j),(i,j - 1)) &= \min(c,j) \mu_2, \quad i \ge 0, ~ j \ge 1.
\end{align*}%
Figure~\ref{fig:transition_rate_diagram} displays the transition rate diagram.

\begin{figure}%
\centering%
\includegraphics{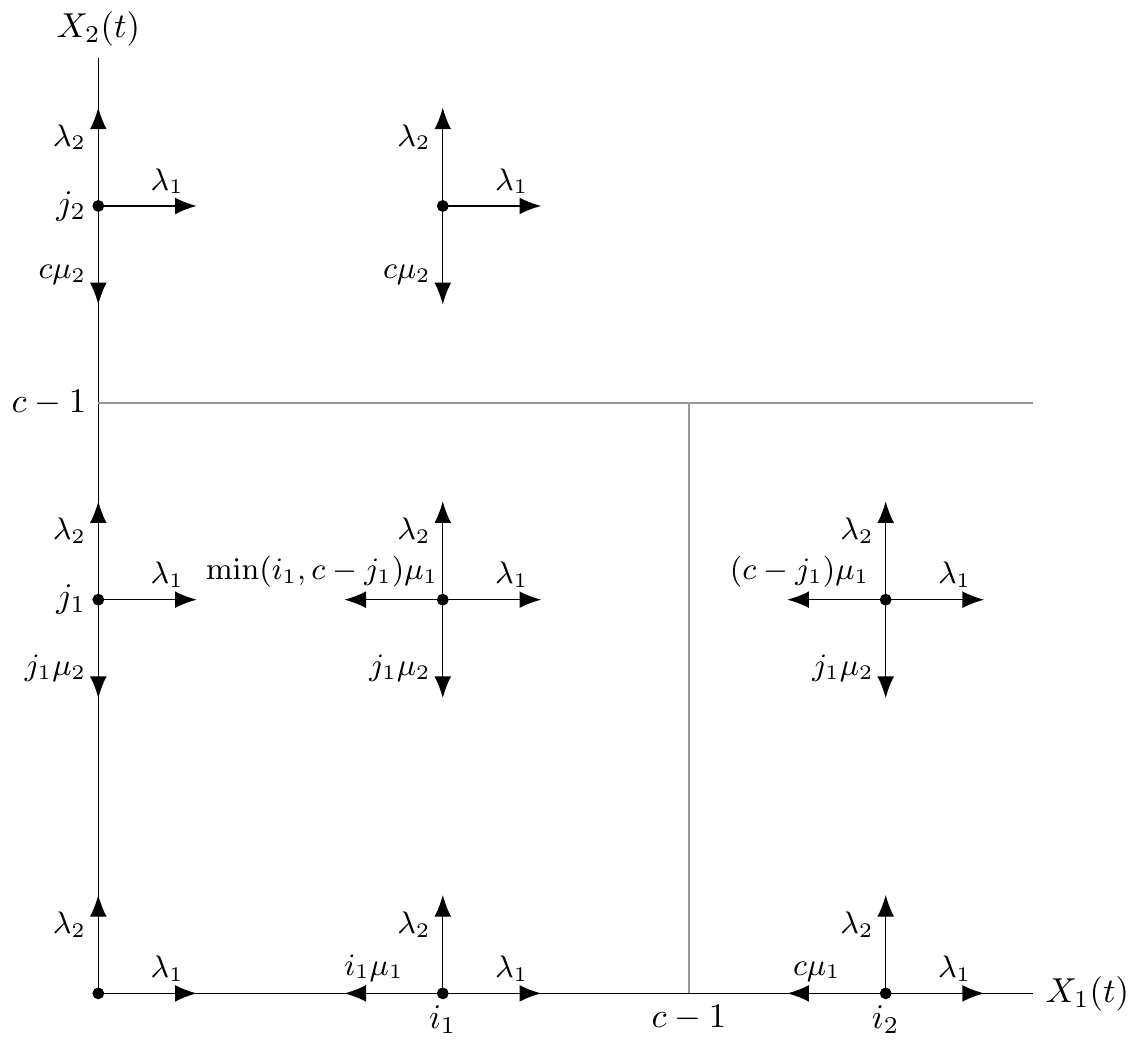}%
\caption{Transition rate diagram of the Markov process $X$.}%
\label{fig:transition_rate_diagram}%
\end{figure}%

We further associate with the Markov process $X$ the collection of transition functions $\{ p_{x,y}(\cdot) \}_{x,y \in \statespace}$, where for each $x, y \in \statespace$ (with possibly $x = y$) the function $p_{x,y} : [0,\infty) \to [0,1]$ is defined as
\begin{equation}\label{eqn:definition_transition_function}%
p_{x,y}(t) \defi \Prob{X(t) = y \mid X(0) = x}, \quad t \ge 0.
\end{equation}%
Each transition function $p_{x,y}(\cdot)$ has a Laplace transform $\pi_{x,y}(\cdot)$ that is well-defined on the subset of complex numbers $\Complex_+ \defi \{ \al \in \Complex : \RealPart{\al} > 0 \}$ as
\begin{equation}\label{eqn:definition_LT_transition_function}%
\pi_{x,y}(\al) \defi \int_{0}^{\infty} \euler^{-\al t} p_{x,y}(t) \, \dinf t, \quad \al \in \Complex_+.
\end{equation}%
We restrict our interest to transition functions of $X$ when $X(0) = \ori$ with probability one (w.p.1), and so we drop the first subscript on both transition functions and Laplace transforms, i.e., $p_{x}(t) \defi p_{\ori,x}(t)$ for each $t \ge 0$ and $\pi_{x}(\al) \defi \pi_{\ori,x}(\al)$ for each $\al \in \Complex_{+}$. Our goal is to derive efficient numerical methods for calculating each Laplace transform $\pi_{x}(\al), ~ x \in \statespace$. We often refer to the Laplace transform $\pi_x(\al)$ associated with the state $x$ as the Laplace transform for state $x$.


\subsection{Notation and terminology}%
\label{subsec:notation}%

It helps to decompose the state space $\statespace$ into a countable number of levels, where for each integer $i \ge 0$, the $i$-th level is the set $\{(i,0), (i,1), \ldots \}$. We further decompose the $i$-th level into an upper level and a lower level: upper level $i$ is defined as $\lvli{i} \defi \{ (i,c),(i,c + 1),\ldots \}$, while lower level $i$ is simply $\lvlb{i} \defi \{ (i,0),(i,1),\ldots,(i,c - 1) \}$ and the union of lower levels $\lvlb{0}, \lvlb{1}, \ldots, \lvlb{i}$ is denoted by $\lvlbunion{i} = \bigcup_{k = 0}^{i} \lvlb{k}$. The set of all states in phase $j$ is denoted by $\ph{j} \defi \{ (0,j),(1,j),\ldots \}$.

We sometimes refer to upper level $\lvli{0}$ as the \textit{vertical boundary}. The union of upper levels
\begin{equation}%
\lvli{1} \cup \lvli{2} \cup \cdots
\end{equation}%
is called the \textit{interior} of the state space. Finally, the union
\begin{equation}%
\lvlb{0} \cup \lvlb{1} \cup \cdots
\end{equation}%
is called the \textit{horizontal boundary} or the \textit{horizontal strip of boundary states}. Figure~\ref{fig:terminology_sets_of_states} depicts these sets.

\begin{figure}%
\centering%
\includegraphics{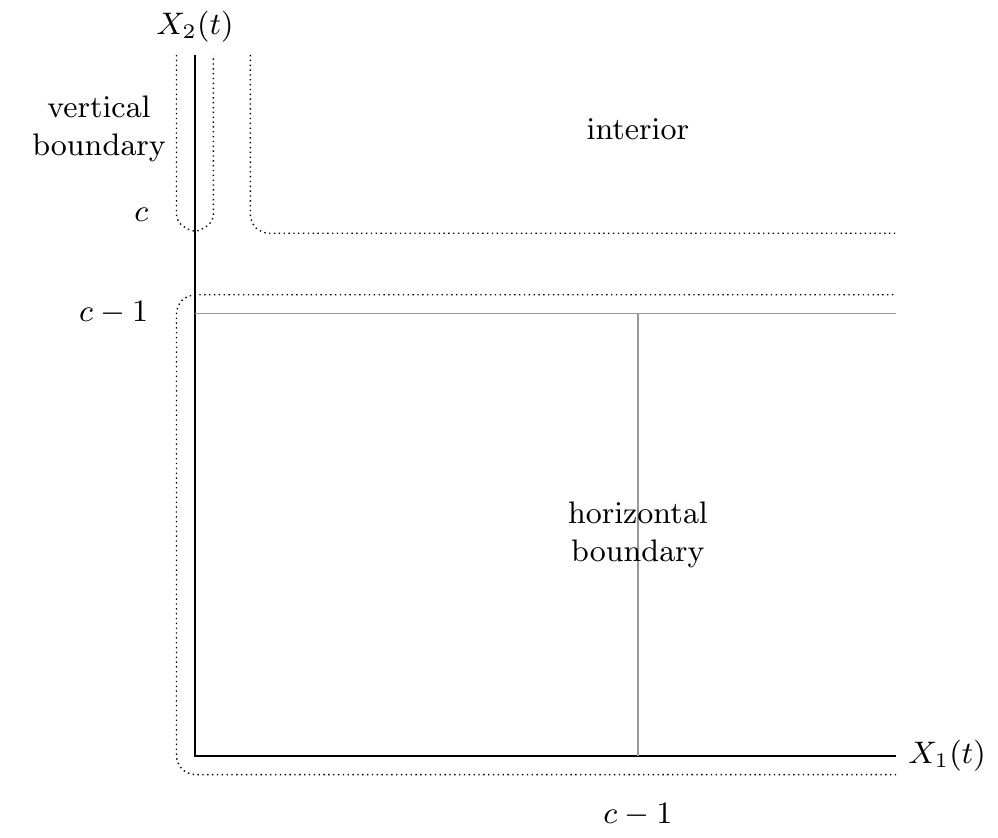}%
\caption{Terminology of the various sets of states.}%
\label{fig:terminology_sets_of_states}%
\end{figure}%

The indicator function $\ind{A}$ equals 1 if $A$ is true and 0 otherwise. Given an arbitrary CTMC $Z$, we let $\E{z}{f(Z)}$ represent the expectation of a functional of $Z$, conditional on $Z(0) = z$, and $\Prob{z}{\cdot}$ denotes the conditional probability associated with $\E{z}{\cdot}$. In our analysis it should be clear from the context what is being conditioned on when we write $\Prob{z}{\cdot}$ or $\E{z}{\cdot}$.

We will also need to make use of hitting-time random variables. We define for each set $A \subset \statespace$,
\begin{equation}%
\tau_A \defi \inf \{ t > 0 : \lim_{s \uparrow t} X(s) \neq X(t) \in A \}
\end{equation}%
as the first time $X$ makes a transition into the set $A$ (so note $X(0) \in A$ does not imply $\tau_A = 0$) and $\tau_x$ should be understood to mean $\tau_{\{ x \}}$.


\subsection{Notation for $M/M/1$ queues}%
\label{subsec:notation_single-server_queues}%

Most of the formulas we derive contain quantities associated with an ordinary $M/M/1$ queue. Given an $M/M/1$ queueing system with arrival rate $\la$ and service rate $\mu$, let $Q_{\la,\mu}(t)$ denote the total number of customers in the system at time $t$. Under the measure $\Prob{n}{\cdot}$, which, in this case, represents conditioning on $Q_{\la,\mu}(0) = n$, let $B_{\la,\mu}$ denote the busy period duration induced by these customers. Under $\Prob{1}{\cdot}$, the Laplace-Stieltjes transform of $B_{\la,\mu}$ is given by
\begin{equation}%
\phi_{\la,\mu}(\al) \defi \E{1}{\euler^{-\al B_{\la,\mu}} } = \frac{\la + \mu + \al - \sqrt{(\la + \mu + \al)^2 - 4\la\mu}}{2\la}. \label{eqn:busy_period_number_departures_MM1_LST}
\end{equation}%
Recall that under $\Prob{n}{\cdot}$, $B_{\la,\mu}$ is equal in distribution to the sum of $n$ i.i.d.~copies of $B_{\la,\mu}$ under the measure $\Prob{1}{\cdot}$, see, e.g., \cite[p.~32]{Takacs1962_Theory_of_queues}. Thus, for each integer $n \ge 1$ we have
\begin{equation}%
\E{ n }{\euler^{-\al B_{\la,\mu}}} = \phi_{\la,\mu}(\al)^n.
\end{equation}%

We will also need to make use of the following quantities in Sections~\ref{sec:horizontal_boundary} and \ref{sec:single-server_case}. Suppose $\{ \La_\theta(t) \}_{t \ge 0}$ is a homogeneous Poisson process with rate $\theta$ that is independent of $\{ Q_{\la,\mu}(t) \}_{t \ge 0}$. For each integer $i \ge 0$, define
\begin{equation}%
\rtwo{\la,\mu,\theta}{i}{\al} \defi \E{1}{\euler^{-\al B_{\la,\mu}} \ind{ \La_\theta(B_{\la,\mu}) = i} }. \label{eqn:definition_busy_period_Poisson_points}
\end{equation}%
Lemma~\ref{lem:recursion_busy_period_Poisson_points} of Appendix~\ref{app:single-server_queues} develops a recursion for the $\rtwo{\la,\mu,\theta}{i}{\al}$ term and in Lemma~\ref{lem:recursion_busy_period_Poisson_points_solution} of Appendix~\ref{app:single-server_queues} we give explicit expressions for these terms by solving the recursion.

The following quantities associated with $M/M/1$ queues will appear at many places of the analysis. To increase readability, we adopt the notation used in \cite{Doroudi2015_CAP} and define the quantities
\begin{equation}%
\phi_2 \defi \phi_{\la_2,c\mu_2}(\la_1 + \al), \quad r_2 \defi \rho_2 \phi_{\la_2,c\mu_2}(\la_1 + \al),
\end{equation}%
and
\begin{equation}%
\Omega_2 \defi \frac{\rho_2 \phi_{\la_2,c\mu_2}(\la_1 + \al)}{\la_2(1 - \rho_2 \phi_{\la_2,c\mu_2}(\la_1 + \al)^2)}.
\end{equation}%

Further results for $M/M/1$ queues are presented in Appendix~\ref{app:single-server_queues}.


\subsection{Outline of our approach}%
\label{subsec:outline_of_the_approach}%

Our approach for computing the Laplace transforms of the transition functions of $X$ when $X(0) = \ori$ w.p.1 is divided in three parts.
\begin{enumerate}[label = \arabic*.]%
\item For each integer $i \ge 0$, we use a slight modification of the CAP method \cite{Doroudi2015_CAP} to write each Laplace transform for each state in $\lvli{i}$, i.e., $\pi_{(i,c - 1 + j)}(\al)$, $j \ge 1$, in terms of the Laplace transforms $\pi_{(k,c - 1)}(\al)$, $0 \le k \le i$ as well as additional coefficients $\{ \rone_{k,l} \}_{i \ge k \ge l \ge 0}$ that satisfy a recursion.
\item In Section~\ref{sec:explicit_expression_v} we obtain an explicit expression for the coefficients $\{ \rone_{k,l} \}_{k \ge l \ge 0}$. This in turn shows that for each $i \ge 0$, each Laplace transform for each state in $\lvli{i}$, i.e., $\pi_{(i,c - 1 + j)}(\al), ~ j \ge 1$, can be explicitly expressed in terms of $\pi_{(k,c - 1)}(\al), ~ 0 \le k \le i$.
\item In Section~\ref{sec:horizontal_boundary} we derive a recursion with which we can determine the Laplace transforms for the states in the horizontal boundary. Specifically, we derive a modification of Ramaswami's formula \cite{Ramaswami1988_Matrix-analytic_stable_recursion} to recursively compute the remaining Laplace transforms $\pi_{(i,j)}(\al), ~ i \ge 0, ~ 0 \le j \le c - 1$. The techniques we use to derive this recursion are exactly the same as the techniques recently used in \cite{Joyner2016_Block-structured_Markov_processes} to study block-structured Markov processes. Only the Ramaswami-like recursion is needed to compute all Laplace transforms: once the values for the Laplace transforms of the states in the horizontal boundary are known, all other transforms can be stated explicitly without using additional recursions.
\end{enumerate}%
%


\section{A slight modification of the CAP method}%
\label{sec:modification_of_CAP}%

The following theorem is used in multiple ways throughout our analysis. It appears in \cite[Theorem~2.1]{Joyner2016_Block-structured_Markov_processes} and can be derived by taking the Laplace transform of both sides of the equation at the top of page 124 of \cite{Latouche1999_Matrix-analytic}. Equation~\eqref{eqn:taboo_transition_functions_double_summation} is the Laplace transform version of \cite[Theorem~1]{Doroudi2015_CAP}.

\begin{theorem}\label{thm:taboo_transition_functions}%
Suppose $A$ and $B$ are disjoint subsets of $\statespace$ with $x \in A$. Then for each $y \in B$,
\begin{equation}\label{eqn:taboo_transition_functions_single_summation}%
\pi_{x,y}(\al) = \sum_{z \in A} \pi_{x,z}(\al) (q(z) + \al) \Efxd{ z }{ \int_0^{\tau_A} \euler^{-\al t} \ind{X(t) = y} \, \dinf t},
\end{equation}%
or, equivalently,
\begin{equation}\label{eqn:taboo_transition_functions_double_summation}%
\pi_{x,y}(\al) = \sum_{z \in A} \pi_{x,z}(\al) \sum_{z' \in A^c} q(z,z') \Efxd{ z' }{ \int_0^{\tau_A} \euler^{-\al t} \ind{X(t) = y} \, \dinf t}.
\end{equation}%
\end{theorem}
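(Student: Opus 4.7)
The plan is to prove \eqref{eqn:taboo_transition_functions_double_summation} via an excursion decomposition of the Laplace-discounted occupation time at $y$; the equivalent form \eqref{eqn:taboo_transition_functions_single_summation} will then follow from a one-step conditioning on the first jump of $X$ out of $z \in A$.

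Starting from
\[
\pi_{x,y}(\al) = \E{x}{\int_0^\infty \euler^{-\al t} \ind{X(t) = y} \, \dinf t},
\]
I would introduce the successive exit and entry times of $X$ relative to $A$: let $U_n$ denote the $(n+1)$-th time $X$ jumps from $A$ to $A^c$ and $V_n$ the $n$-th time $X$ jumps from $A^c$ back to $A$, with the convention $\inf \emptyset = \infty$. Since $A \cap B = \emptyset$ and $y \in B$, the integrand $\ind{X(t) = y}$ vanishes whenever $X \in A$, so the integral reduces to a sum over the excursions $[U_n, V_{n+1})$. Applying the strong Markov property at each $U_n$ (and using that from any $z' \in A^c$ the first subsequent entry into $A$ is exactly $\tau_A$) yields
\[
\pi_{x,y}(\al) = \sum_{n \ge 0} \E{x}{\euler^{-\al U_n} \ind{U_n < \infty} \, G_A(X(U_n), y, \al)},
\]
where I abbreviate $G_A(z', y, \al) \defi \E{z'}{\int_0^{\tau_A} \euler^{-\al t} \ind{X(t) = y} \, \dinf t}$.

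Next, grouping excursions by the exit-jump pair $(Z_n, Z_n') \defi (X(U_n-), X(U_n)) \in A \times A^c$ gives
\[
\pi_{x,y}(\al) = \sum_{z \in A} \sum_{z' \in A^c} G_A(z', y, \al) \sum_{n \ge 0} \E{x}{\euler^{-\al U_n} \ind{Z_n = z,\, Z_n' = z'}}.
\]
The crux is to identify the inner sum with $q(z,z')\, \pi_{x,z}(\al)$. Let $N_{z,z'}(t) \defi \#\{s \le t : X(s-) = z,\, X(s) = z'\}$; since $z \in A$ and $z' \in A^c$, every $z \to z'$ jump is automatically an exit from $A$, so the jumps of $N_{z,z'}$ coincide with those $U_n$ satisfying $(Z_n, Z_n') = (z, z')$. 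By standard CTMC theory, $N_{z,z'}(t) - q(z,z') \int_0^t \ind{X(s) = z} \, \dinf s$ is a local martingale, so integrating the deterministic kernel $\euler^{-\al t}$ against this decomposition (with Fubini justified by $\RealPart{\al} > 0$) yields
\[
\sum_{n \ge 0} \E{x}{\euler^{-\al U_n} \ind{Z_n = z,\, Z_n' = z'}} = q(z,z')\, \pi_{x,z}(\al),
\]
which substituted back produces \eqref{eqn:taboo_transition_functions_double_summation}.

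Finally, I would obtain \eqref{eqn:taboo_transition_functions_single_summation} from \eqref{eqn:taboo_transition_functions_double_summation} by conditioning, under $\Prob{z}{\cdot}$ with $z \in A$, on the first jump of $X$: jumps within $A$ contribute nothing to $\int_0^{\tau_A} \euler^{-\al t}\ind{X(t) = y}\,\dinf t$ because $X(t) = z \neq y$ on the pre-jump interval, while a jump to $z' \in A^c$ (occurring at $T_1 \sim \mathrm{Exp}(q(z))$ with conditional probability $q(z,z')/q(z)$, independent of $T_1$) contributes $q(z,z')(q(z)+\al)^{-1} G_A(z', y, \al)$ by the strong Markov property at $T_1$. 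Summing over $z'$ yields the algebraic identity
\[
(q(z) + \al)\, G_A(z, y, \al) = \sum_{z' \in A^c} q(z,z')\, G_A(z', y, \al),
\]
which converts \eqref{eqn:taboo_transition_functions_double_summation} into \eqref{eqn:taboo_transition_functions_single_summation}. The main technical obstacle is the compensator/Fubini step, namely justifying the interchange of sum and expectation for Laplace-discounted counting-process martingales; a slicker alternative is to follow the authors' pointer and simply Laplace-transform the time-domain first-passage identity at the top of page~124 of \cite{Latouche1999_Matrix-analytic}.
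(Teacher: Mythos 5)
Your proof is correct, and it takes a genuinely different route from the paper's, because the paper does not actually prove Theorem~\ref{thm:taboo_transition_functions}: it simply cites \cite[Theorem~2.1]{Joyner2016_Block-structured_Markov_processes} and remarks that the identity can be obtained by Laplace-transforming a time-domain last-exit decomposition from \cite{Latouche1999_Matrix-analytic}. What you give instead is a self-contained Laplace-domain derivation: you split the occupation integral over the successive $A$-to-$A^c$ excursions, apply the strong Markov property at each exit epoch $U_n$ to factor off $G_A(X(U_n),y,\al)$, and then identify $\sum_n \mathbb{E}_x[\euler^{-\al U_n}\ind{Z_n=z,\,Z_n'=z'}]$ with $q(z,z')\pi_{x,z}(\al)$ via the compensator of the jump-counting process $N_{z,z'}$. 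This is sound, and the equivalence step from \eqref{eqn:taboo_transition_functions_double_summation} to \eqref{eqn:taboo_transition_functions_single_summation}, namely $(q(z)+\al)G_A(z,y,\al)=\sum_{z'\in A^c}q(z,z')G_A(z',y,\al)$ via a one-step analysis at $T_1$ (using that $\tau_A = T_1$ when the first jump stays in $A$, and that $X(t)=z\neq y$ on $[0,T_1)$), is exactly right. The one place where you could lighten the machinery is the compensator/local-martingale step: for a CTMC with bounded rates the first-moment identity $\mathbb{E}_x[N_{z,z'}(t)] = q(z,z')\int_0^t p_{x,z}(s)\,\dinf s$ is elementary, and Tonelli (for real $\al>0$, extended to $\Complex_+$ by dominated convergence) directly gives $\mathbb{E}_x\bigl[\int_0^\infty \euler^{-\al t}\,\dinf N_{z,z'}(t)\bigr] = q(z,z')\pi_{x,z}(\al)$ without invoking predictable projections. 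What the paper's cited route buys is brevity (the hard work is offloaded to the reference); what yours buys is self-containment and an argument that sits naturally alongside the random-product representation machinery of \cite{Fralix2015_MC_Similar} that the paper already relies on in Appendix~\ref{app:random-product_representation}.
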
%


\subsection{Laplace transforms for states along the vertical boundary}%
\label{subsec:vertical_boundary}%

In this subsection we employ Theorem~\ref{thm:taboo_transition_functions} to express each Laplace transform $\pi_{(0,c - 1 + j)}(\al), ~ j \ge 1$ in terms of $\pi_{(0,c - 1)}(\al)$.

Using Theorem~\ref{thm:taboo_transition_functions} with $A = \lvli{0}^c$ we obtain, for $j \ge 1$,
\begin{align}%
\pi_{(0,c - 1 + j)}(\al) &= \sum_{z \in \lvli{0}^c} \pi_z(\al) \sum_{z' \in \lvli{0}} q(z,z') \Efxd{ z' }{ \int_0^{\tau_{\lvli{0}^c}} \euler^{-\al t} \ind{X(t) = (0,c - 1 + j)} \, \dinf t} \notag \\
&= \pi_{(0,c - 1)}(\al) \la_2 \Efxd{ (0,c) }{ \int_0^{\tau_{\lvli{0}^c}} \euler^{-\al t} \ind{X(t) = (0,c - 1 + j)} \, \dinf t}. \label{eqn:vertical_boundary_1}
\end{align}%
From the transition rate diagram in Figure~\ref{fig:transition_rate_diagram}, we find that the expectation in \eqref{eqn:vertical_boundary_1} can be interpreted as an expectation associated with an $M/M/1$ queue having arrival rate $\la_2$ and service rate $c \mu_2$. Indeed, $\tau_{\lvli{0}^c}$ is equal in distribution to the minimum of the busy period---initialized by one customer---of this $M/M/1$ queue and an exponential random variable with rate $\la_1$ that is independent of the queue. Alternatively, $\tau_{\lvli{0}^c}$ can be thought of as being equal in distribution to the busy period duration of an $M/M/1$ clearing model, with arrival rate $\la_2$, service rate $c\mu_2$, and clearings that occur in a Poisson manner with rate $\la_1$. Applying Lemma~\ref{lem:exponential_clearings_time_spent_in_state_j} of Appendix~\ref{app:single-server_queues} shows that
\begin{equation}%
\la_2 \Efxd{ (0,c) }{ \int_0^{\tau_{\lvli{0}^c}} \euler^{-\al t} \ind{X(t) = (0,c - 1 + j)} \, \dinf t} = r_2^j. \label{eqn:vertical_boundary_2}
\end{equation}%
Substituting \eqref{eqn:vertical_boundary_2} into \eqref{eqn:vertical_boundary_1} then yields
\begin{equation}\label{eqn:vertical_boundary_pi_(0,j)}%
\pi_{(0,c - 1 + j)}(\al) = \pi_{(0,c - 1)}(\al) r_2^j, \quad j \ge 1.
\end{equation}%
%


\subsection{Laplace transforms for states within the interior}%
\label{subsec:interior}%

We next develop a recursion for the Laplace transforms for the states within the interior. First, we express the transforms in upper level $\lvli{i}$ in terms of the transforms in upper level $\lvli{i - 1}$ and in state $(i,c - 1)$. Second, we use this result to express the transforms in upper level $\lvli{i}$ in terms of the transforms for the states $(0,c - 1),(1,c - 1),\ldots,(i,c - 1)$ and some additional coefficients.

Employing again Theorem~\ref{thm:taboo_transition_functions}, now with $A = \lvli{i}^c$, yields for $i,j \ge 1$,
\begin{align}%
\pi_{(i,c - 1 + j)}(\al) &= \sum_{z \in \lvli{i}^c} \pi_z(\al) \sum_{z' \in \lvli{i}} q(z,z') \Efxd{ z' }{ \int_0^{\tau_{\lvli{i}^c}} \euler^{-\al t} \ind{X(t) = (i,c - 1 + j)} \, \dinf t} \notag \\
&= \sum_{k = 1}^\infty \pi_{(i - 1,c - 1 + k)}(\al) \la_1 \Efxd{ (i,c - 1 + k) }{ \int_0^{\tau_{\lvli{i}^c}} \euler^{-\al t} \ind{X(t) = (i,c - 1 + j)} \, \dinf t} \notag \\
&\quad + \pi_{(i,c - 1)}(\al) \la_2 \Efxd{ (i,c) }{ \int_0^{\tau_{\lvli{i}^c}} \euler^{-\al t} \ind{X(t) = (i,c - 1 + j)} \, \dinf t}. \label{eqn:interior_first_recursion}
\end{align}%
The expectation
\begin{equation}%
\Efxd{ (i,c - 1 + k) }{ \int_0^{\tau_{\lvli{i}^c}} \euler^{-\al t} \ind{X(t) = (i,c - 1 + j)} \, \dinf t }, \quad j,k \ge 1
\end{equation}%
has the same interpretation as the expectation in \eqref{eqn:vertical_boundary_1}, except now the $M/M/1$ queue starts with $k$ customers at time 0. Using Lemma~\ref{lem:exponential_clearings_time_spent_in_state_j} of Appendix~\ref{app:single-server_queues}, we obtain
\begin{equation}%
\la_1 \Efxd{ (i,c - 1 + k) }{ \int_0^{\tau_{\lvli{i}^c}} \euler^{-\al t} \ind{X(t) = (i,c - 1 + j)} \, \dinf t } = \Upsilon(j,k), \quad j,k \ge 1, \label{eqn:interior_first_recursion_expectation}
\end{equation}%
where, for $j,k \ge 1$,
\begin{equation}%
\Upsilon(j,k) \defi \begin{cases}%
\la_1 \Omega_2 r_2^{j - k} ( 1 - (r_2 \phi_2)^k ), & 1 \le k \le j - 1, \\
\la_1 \Omega_2 \phi_2^{k - j} (1 - (r_2 \phi_2)^j), & k \ge j.
\end{cases}%
\label{eqn:definition_Upsilon}
\end{equation}%
Substituting \eqref{eqn:interior_first_recursion_expectation} into \eqref{eqn:interior_first_recursion} and simplifying yields a recursion. Specifically, for $i \ge 0$ and $j \ge 1$,
\begin{align}%
\pi_{(i + 1,c - 1 + j)}(\al) = r_2^j \pi_{(i + 1,c - 1)}(\al) + \sum_{k = 1}^\infty \Upsilon(j,k) \pi_{(i,c - 1 + k)}(\al), \label{eqn:interior_second_recursion}
\end{align}%
with initial conditions $\pi_{(0,c - 1 + j)}(\al) = \pi_{(0,c - 1)}(\al) r_2^j, ~ j \ge 1$.

The recursion \eqref{eqn:interior_second_recursion} can be solved, i.e., $\pi_{(i,c - 1 + j)}(\al)$ can be expressed in terms of the transforms $\pi_{(0,c - 1)}(\al),\pi_{(1,c - 1)}(\al),\ldots,\pi_{(i,c - 1)}(\al)$.

\begin{theorem}[Interior]\label{thm:interior_third_recursion}
For $i \ge 0, ~ j \ge 1$,
\begin{equation}%
\pi_{(i,c - 1 + j)}(\al) = \sum_{k = 0}^i \rone_{i,k} (1 - \Rone_2)^k \binom{j - 1 + k}{k} r_2^j, \label{eqn:interior_third_recursion}
\end{equation}%
where the quantities $\{ \rone_{i,j} \}_{i \ge j \ge 0}$ satisfy the following recursive scheme\textup{:} for $i \ge 0$,
\begin{subequations}%
\label{eqn:interior_coefficients_recursion}
\begin{align}%
\rone_{i + 1,0} &= \pi_{(i + 1,c - 1)}(\al), \\
\rone_{i + 1,j} &= \Rone_1 \Bigl( \rone_{i,j - 1} + \Rone_2 \sum_{k = j}^i \rone_{i,k} \Bigr), \quad 1 \le j \le i + 1, \label{eqn:interior_coefficients_recursion_middle}
\end{align}%
\end{subequations}%
with initial condition $\rone_{0,0} = \pi_{(0,c - 1)}(\al)$. Here $\Rone_1 = \frac{\la_1 \Omega_2}{1 - r_2 \phi_2}$, and $\Rone_2 = r_2 \phi_2$.
\end{theorem}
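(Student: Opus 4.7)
My plan is to prove \eqref{eqn:interior_third_recursion} by induction on $i$, feeding the induction hypothesis into the one-step recursion \eqref{eqn:interior_second_recursion} that was already derived from Theorem~\ref{thm:taboo_transition_functions}. The base case $i=0$ is immediate from \eqref{eqn:vertical_boundary_pi_(0,j)}: with $\rone_{0,0} = \pi_{(0,c-1)}(\al)$, the right-hand side of \eqref{eqn:interior_third_recursion} collapses to $\rone_{0,0} \binom{j-1}{0} r_2^j = \pi_{(0,c-1)}(\al) r_2^j$, which matches.

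For the inductive step, I substitute \eqref{eqn:interior_third_recursion} at level $i$ into \eqref{eqn:interior_second_recursion} and swap the order of summation to obtain
\[
\pi_{(i+1,c-1+j)}(\al) = r_2^j \pi_{(i+1,c-1)}(\al) + \sum_{l=0}^{i} \rone_{i,l}(1-\Rone_2)^l \, S_l(j),
\]
where $S_l(j) \defi \sum_{k=1}^\infty \Upsilon(j,k) \binom{k-1+l}{l} r_2^k$. The first term already realizes the $m=0$ contribution of the target formula at level $i+1$ once one sets $\rone_{i+1,0} = \pi_{(i+1,c-1)}(\al)$, so the remaining task is to show that $\sum_{l=0}^{i} \rone_{i,l}(1-\Rone_2)^l S_l(j)$ can be rewritten as $\sum_{m=1}^{i+1} \rone_{i+1,m}(1-\Rone_2)^m \binom{j-1+m}{m} r_2^j$ with coefficients satisfying \eqref{eqn:interior_coefficients_recursion_middle}.

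The main obstacle is the explicit evaluation of $S_l(j)$. Because $\Upsilon(j,k)$ is piecewise, $S_l(j)$ splits at $k=j$ into a finite piece $\la_1 \Omega_2 r_2^j \sum_{k=1}^{j-1}(1-\Rone_2^k)\binom{k-1+l}{l}$ and an infinite geometric tail $\la_1 \Omega_2 r_2^j (1-\Rone_2^j) \sum_{k=j}^\infty \Rone_2^{k-j}\binom{k-1+l}{l}$, after extracting the common prefactor and using $\Rone_2 = r_2\phi_2$. The finite portion is handled by hockey-stick summation, while the tail reduces via the negative-binomial generating function $\sum_{p \ge 0} x^p \binom{p+l}{l} = (1-x)^{-(l+1)}$ together with a Vandermonde/Pascal re-expansion of $\binom{p+j-1+l}{l}$ as a linear combination of $\binom{j-1+m}{m}$ for $m \le l+1$; I expect these to be the identities collected in the combinatorial-identities appendix. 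Substituting back, the factors of $(1-\Rone_2)^{-(l+1)}$ and $1 - \Rone_2^j$ combine with $\la_1 \Omega_2$ to produce the common prefactor $\Rone_1 = \la_1 \Omega_2/(1-r_2\phi_2)$, and matching coefficients of $(1-\Rone_2)^m \binom{j-1+m}{m} r_2^j$ on both sides gives exactly the triangular recursion \eqref{eqn:interior_coefficients_recursion_middle}, completing the induction.
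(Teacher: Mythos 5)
Your induction scheme matches the paper's exactly: the base case is the vertical-boundary identity \eqref{eqn:vertical_boundary_pi_(0,j)}, and the inductive step feeds \eqref{eqn:interior_third_recursion} into \eqref{eqn:interior_second_recursion}, swaps summations, and reads off the coefficient recursion by re-indexing. The genuine difference is in how you evaluate $S_l(j) \defi \sum_{k \ge 1}\Upsilon(j,k)\binom{k-1+l}{l}r_2^k$: the paper outsources precisely this sum to Lemma~\ref{lem:interior_infinite_summation_Upsilon}, whose proof it in turn defers to Lemma~1 of Doroudi et al.~\cite{Doroudi2015_CAP}, whereas you propose to derive it from scratch. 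That is a legitimate and somewhat more self-contained route, but one step is under-specified: after splitting at $k=j$, the ``finite piece'' is $\la_1\Omega_2 r_2^j\sum_{k=1}^{j-1}(1-\Rone_2^k)\binom{k-1+l}{l}$, and only the unweighted sub-sum $\sum_{k=1}^{j-1}\binom{k-1+l}{l}=\binom{j-1+l}{l+1}$ is a hockey-stick sum; the $\Rone_2^k$-weighted sub-sum is not. It should instead be merged with the $-\Rone_2^j \sum_{p\ge 0}\Rone_2^p\binom{p+j-1+l}{l}$ portion of the tail to reconstitute the full negative-binomial series $\sum_{k\ge 1}\Rone_2^k\binom{k-1+l}{l}=\Rone_2(1-\Rone_2)^{-(l+1)}$. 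Once that merger is made, the remaining ingredients are as you describe: the generating function $\sum_{p\ge 0}x^p\binom{p+n}{n}=(1-x)^{-(n+1)}$, Pascal's rule to upgrade $\binom{j-1+l}{l+1}$ to $\binom{j+l}{l+1}$, and the convolution $\sum_{m=0}^l\binom{j-1+m}{m}\binom{p+l-m}{l-m}=\binom{p+j+l}{l}$ (essentially Lemma~\ref{lem:binomial_identity_using_Vandermonde_identity}). This reproduces Lemma~\ref{lem:interior_infinite_summation_Upsilon} exactly, after which your ``match coefficients of $(1-\Rone_2)^m\binom{j-1+m}{m}r_2^j$'' step coincides with the paper's re-indexing and double-sum swap that yields \eqref{eqn:interior_coefficients_recursion_middle}.
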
%

Throughout we follow the convention that all empty sums, such as $\sum_{k = 1}^{0} (\cdot)$, represent the number zero.

\begin{proof}%
Clearly, when $i = 0$, \eqref{eqn:interior_third_recursion} agrees with \eqref{eqn:vertical_boundary_pi_(0,j)}. Proceeding by induction, assume \eqref{eqn:interior_third_recursion} holds among upper levels $\lvli{0},\lvli{1},\ldots,\lvli{i}$ for some $i \ge 0$. Substituting \eqref{eqn:interior_third_recursion} into \eqref{eqn:interior_second_recursion} yields
\begin{equation}%
\pi_{(i + 1,c - 1 + j)}(\al) = r_2^j \pi_{(i + 1,c - 1)}(\al) + \sum_{k = 1}^\infty \Upsilon(j,k) \sum_{l = 0}^i \rone_{i,l} (1 - \Rone_2)^l \binom{k - 1 + l}{l} r_2^k. \label{eqn:interior_third_recursion_proof_1}
\end{equation}%
Next, interchange the order of the two summations and apply Lemma~\ref{lem:interior_infinite_summation_Upsilon} of Appendix~\ref{app:combinatorial_identities} to get
\begin{align}%
\eqref{eqn:interior_third_recursion_proof_1} &= r_2^j \pi_{(i + 1,c - 1)}(\al) + \sum_{l = 0}^i \rone_{i,l} (1 - \Rone_2)^l \la_1 \Omega_2 \binom{j - 1 + l + 1}{l + 1} r_2^j \notag \\
&\quad + \sum_{l = 0}^i \rone_{i,l} (1 - \Rone_2)^l \Rone_1 \Rone_2 \sum_{m = 1}^l \frac{1}{(1 - \Rone_2)^{l - m}} \binom{j - 1 + m}{m} r_2^j. \label{eqn:interior_third_recursion_proof_2}
\end{align}%
To further simplify the right-hand side of \eqref{eqn:interior_third_recursion_proof_2}, increase the summation index of the first summation by one by setting $k = l + 1$, multiply its summands by $\frac{1 - r_2 \phi_2}{1 - r_2 \phi_2}$, and change the order of the double summation. This yields
\begin{align}%
\eqref{eqn:interior_third_recursion_proof_2} &= r_2^j \pi_{(i + 1,c - 1)}(\al) + \sum_{k = 1}^{i + 1} \Rone_1 \rone_{i,k - 1} (1 - \Rone_2)^k \binom{j - 1 + k}{k} r_2^j \notag \\
&\quad + \sum_{k = 1}^i \Rone_1 \Rone_2 \sum_{l = k}^i \rone_{i,l} (1 - \Rone_2)^k \binom{j - 1 + k}{k} r_2^j \notag \\
&= r_2^j \pi_{(i + 1,c - 1)}(\al)  + \sum_{k = 1}^{i + 1} \Rone_1 \Bigl( \rone_{i,k - 1} + \Rone_2 \sum_{l = k}^i \rone_{i,l} \Bigr) (1 - \Rone_2)^k \binom{j - 1 + k}{k} r_2^j, 
\end{align}%
which shows $\pi_{(i+1,c - 1 + j)}(\al)$ satisfies \eqref{eqn:interior_third_recursion}, completing the induction step.
\end{proof}%


\section{Deriving an explicit expression for $\rone_{i,j}$}%
\label{sec:explicit_expression_v}%

Theorem~\ref{thm:interior_third_recursion} suggests that the Laplace transforms for the states within the interior can be computed recursively.
\begin{enumerate}%
\item \textit{Initialization step}: Determine $\pi_{(0,c - 1)}(\al)$, which yields each transform $\pi_{(0,c - 1 + j)}(\al), ~ j \ge 1$.
\item \textit{Recursive step on $i$}: Given $\pi_{(k, c - 1)}(\al)$ for $0 \le k \le i$ and the coefficients $\{ v_{k,l} \}_{i \ge k \ge l \ge 0}$
    \begin{enumerate}[label = \alph*.]%
    \item Compute $\pi_{(i + 1,c - 1)}(\al)$.
    \item Compute $\{ v_{i + 1,l} \}_{i + 1 \ge l \ge 0}$.
    \item Once steps 2a.~and 2b.~are completed, all transform values $\pi_{(i + 1,c - 1 + j)}(\al), ~ j \ge 1$ are known.
    \end{enumerate}%
\end{enumerate}%

Our next result, i.e., Theorem~\ref{thm:interior_solution_coefficients_recursion}, shows that for each $i \ge 0$, the $\{ v_{i,l} \}_{i \ge l \ge 0}$ terms can be expressed explicitly in terms of $\pi_{(k,c - 1)}(\al), ~ 0 \le k \le i$. If our goal is to only compute $\pi_{(i,c - 1 + j)}(\al)$ for some large $i$, then Theorem~\ref{thm:interior_solution_coefficients_recursion} allows us to avoid computing all intermediate $\{ v_{k,l} \}_{i - 1 \ge k \ge l \ge 0}$ terms, which means we can avoid computing an additional $\BigO(i^2)$ terms. Not only that, knowing exactly how these $v_{i,j}$ coefficients look could aid in future questions asked by researchers interested in the $M/M/c$ 2-class priority queue.

Readers should keep in mind that the expressions we have derived for the $v_{i,j}$ coefficients do contain binomial coefficients, and one should be careful to avoid roundoff errors while computing these expressions.


\begin{theorem}[Coefficients]\label{thm:interior_solution_coefficients_recursion}
The coefficients $\{ \rone_{i,j} \}_{i \ge j \ge 0}$ from \textup{Theorem~\ref{thm:interior_third_recursion}} are as follows\textup{:} for $i \ge j \ge 0$,
\begin{align}%
\rone_{i,j} &= \Rone_1^j \pi_{(i - j,c - 1)}(\al) + \sum_{k = j + 1}^i \Rone_1^k \pi_{(i - k,c - 1)}(\al) \sum_{l = 1}^{k - j} \frac{j}{k - j} \binom{k - j}{l} \binom{k - 1}{l - 1} \Rone_2^l. \label{eqn:interior_solution_coefficients_recursion}
\end{align}%
\end{theorem}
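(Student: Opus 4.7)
The plan is to proceed by induction on $i$. The base case $i = 0$ is immediate: for $(i,j) = (0,0)$ the double summation in \eqref{eqn:interior_solution_coefficients_recursion} is empty, leaving $\rone_{0,0} = \pi_{(0,c - 1)}(\al)$, which matches the initial condition of Theorem~\ref{thm:interior_third_recursion}.

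For the inductive step, I would find it convenient to introduce the auxiliary coefficient
\begin{equation*}
g(k,j) \defi \begin{cases}1,& k=j,\\ \displaystyle\sum_{l=1}^{k-j}\frac{j}{k-j}\binom{k-j}{l}\binom{k-1}{l-1}\Rone_2^{\,l},& k>j,\end{cases}
\end{equation*}
so that the claimed formula reads $\rone_{i,j}=\sum_{k=j}^{i}\Rone_1^{\,k}\pi_{(i-k,c - 1)}(\al)\, g(k,j)$. Assume this compact form at level $i$. For $j=0$ the recursion gives $\rone_{i+1,0}=\pi_{(i+1,c - 1)}(\al)$, which matches the compact form since $g(0,0)=1$. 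For $1\le j\le i+1$, substitute the inductive hypothesis for $\rone_{i,j-1}$ and for each $\rone_{i,k}$ with $j\le k\le i$ into \eqref{eqn:interior_coefficients_recursion_middle}. After reindexing $k\mapsto k+1$ in the contribution from $\Rone_1 \rone_{i,j-1}$ and interchanging the order of summation in the double sum arising from $\Rone_1\Rone_2\sum_{k=j}^{i}\rone_{i,k}$, both pieces can be grouped by the common factor $\Rone_1^{\,k}\pi_{(i+1-k,c - 1)}(\al)$, yielding
\begin{equation*}
\rone_{i+1,j}=\sum_{k=j}^{i+1}\Rone_1^{\,k}\pi_{(i+1-k,c - 1)}(\al)\Bigl[g(k-1,j-1)+\Rone_2\sum_{m=j}^{k-1} g(k-1,m)\Bigr].
\end{equation*}

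To close the induction, it therefore suffices to verify the purely combinatorial identity
\begin{equation*}
g(k,j) = g(k-1,j-1)+\Rone_2\sum_{m=j}^{k-1}g(k-1,m),\qquad k\ge j\ge 1.
\end{equation*}
I would prove this by extracting the coefficient of $\Rone_2^{\,l}$ on each side. The left-hand side contributes $\frac{j}{k-j}\binom{k-j}{l}\binom{k-1}{l-1}$, while the right-hand side contributes $\frac{j-1}{k-j}\binom{k-j}{l}\binom{k-2}{l-1}$ from $g(k-1,j-1)$ together with $\binom{k-2}{l-2}\sum_{m=j}^{k-2}\frac{m}{k-1-m}\binom{k-1-m}{l-1}$, plus the boundary contribution $\ind{l=1}$ coming from $g(k-1,k-1)=1$. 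Using $\frac{1}{k-1-m}\binom{k-1-m}{l-1}=\frac{1}{l-1}\binom{k-2-m}{l-2}$ together with Pascal's rule $\binom{k-1}{l-1}=\binom{k-2}{l-1}+\binom{k-2}{l-2}$, the identity reduces to a standard hockey-stick summation that can be packaged as a self-contained lemma alongside Lemma~\ref{lem:interior_infinite_summation_Upsilon} in the combinatorial appendix. The main obstacle is bookkeeping in this reduction, in particular handling the boundary cases $l=1$ and $k=j+1$ and the edge case $j=0$ for which $g(k,0)=0$ whenever $k>0$; once these cases are separated, the algebra is elementary and the induction closes.
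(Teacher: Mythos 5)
Your proof is correct and follows the same inductive backbone as the paper's (induction on $i$, substitution of the candidate formula into the recursion \eqref{eqn:interior_coefficients_recursion_middle}, then a binomial-coefficient computation). The genuine organizational difference is your $g(k,j)$ abstraction: by writing $\rone_{i,j}=\sum_{k=j}^{i}\Rone_1^{k}\pi_{(i-k,c-1)}(\al)\, g(k,j)$ and matching coefficients of $\Rone_1^{k}\pi_{(i+1-k,c-1)}(\al)$, you strip away the Laplace transforms and the $\Rone_1$ powers entirely, reducing the whole inductive step to the single combinatorial recurrence $g(k,j)=g(k-1,j-1)+\Rone_2\sum_{m=j}^{k-1}g(k-1,m)$. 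This is tidier than the paper's argument, which carries the $\pi$ and $\Rone_1$ factors throughout and consequently needs the three-case split $j=1$, $2\le j\le i-1$, $j=i$; in your formulation the conventions $g(j,j)=1$, $g(0,0)=1$, and $g(k,0)=0$ for $k>0$ handle all boundary values of $j$ uniformly, including $j=i+1$, with no extra casework. Two remarks. First, the ``self-contained hockey-stick lemma'' you propose to prove is, after the change of variables you indicate, exactly Lemma~\ref{lem:binomial_identity_summing_upper_index} of Appendix~\ref{app:combinatorial_identities} (you cite Lemma~\ref{lem:interior_infinite_summation_Upsilon}, but Lemma~\ref{lem:binomial_identity_summing_upper_index} is the one you actually need); invoking it directly together with Pascal's rule closes the coefficient computation without any new lemma. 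Second, although you flag the bookkeeping as ``the main obstacle,'' the computation you sketch does go through: for $l\ge 2$ the coefficient of $\Rone_2^{l}$ on the right-hand side is $\frac{j-1}{k-j}\binom{k-j}{l}\binom{k-2}{l-1}+\binom{k-2}{l-2}\frac{k-l+jl-j}{(l-1)(k-j)}\binom{k-j}{l}$, which collapses to $\frac{j}{k-j}\binom{k-j}{l}\binom{k-1}{l-1}$ precisely by Pascal's rule and the identity $\binom{k-2}{l-1}=\frac{k-l}{l-1}\binom{k-2}{l-2}$; the $l=1$ case is handled by the $\ind{l=1}$ term you note, and for $l>k-j$ both sides vanish.
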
%

\begin{proof}%
From \eqref{eqn:interior_coefficients_recursion} we find, for each $i \ge 0$, that $\rone_{i,0} = \pi_{(i,c - 1)}(\al)$ and $\rone_{i,i} = \Rone_1^i \pi_{(0,c - 1)}(\al)$; these expressions agree with \eqref{eqn:interior_solution_coefficients_recursion}.

Next, assume for some integer $i \ge 0$ that $\rone_{i,j}$ satisfies \eqref{eqn:interior_solution_coefficients_recursion} for $0 \le j \le i$. Our aim is to show $\rone_{i + 1,j}$ also satisfies \eqref{eqn:interior_solution_coefficients_recursion} for $0 \le j \le i + 1$: we do this by substituting \eqref{eqn:interior_solution_coefficients_recursion} into \eqref{eqn:interior_coefficients_recursion_middle} and simplifying. There are three cases to consider: (i) $j = 1$; (ii) $2 \le j \le i - 1$; and (iii) $j = i$. We focus on case (ii), with cases (i) and (iii) following similarly.

We first examine the $\Rone_1 \rone_{i,j - 1}$ term in \eqref{eqn:interior_coefficients_recursion_middle} by substituting \eqref{eqn:interior_solution_coefficients_recursion}. Here,
\begin{align}%
\Rone_1 \rone_{i,j - 1} &= \Rone_1^j \pi_{(i + 1 - j,c - 1)}(\al) + \sum_{k = j}^i \Rone_1^{k + 1} \pi_{(i - k,c - 1)}(\al) \sum_{l = 1}^{k + 1 - j} \frac{j - 1}{k + 1 - j} \binom{k + 1 - j}{l} \binom{k - 1}{l - 1} \Rone_2^l \notag \\
&= \Rone_1^j \pi_{(i + 1 - j,c - 1)}(\al) + \sum_{k = j + 1}^{i + 1} \Rone_1^k \pi_{(i + 1 - k,c - 1)}(\al) \sum_{l = 1}^{k - j} \frac{j - 1}{k - j} \binom{k - j}{l} \binom{k - 2}{l - 1} \Rone_2^l. \label{eqn:interior_solution_coefficients_recursion_proof_first_term}
\end{align}%
Next, write \eqref{eqn:interior_solution_coefficients_recursion_proof_first_term} in a form where the binomial coefficients match the ones in \eqref{eqn:interior_solution_coefficients_recursion}:
\begin{equation}%
\eqref{eqn:interior_solution_coefficients_recursion_proof_first_term} = \Rone_1^j \pi_{(i + 1 - j,c - 1)}(\al) + \sum_{k = j + 1}^{i + 1} \Rone_1^k \pi_{(i + 1 - k,c - 1)}(\al) \sum_{l = 1}^{k - j} \frac{(j - 1)(k - l)}{(k - j)(k - 1)} \binom{k - j}{l} \binom{k - 1}{l - 1} \Rone_2^l. \label{eqn:interior_solution_coefficients_recursion_proof_first_term_correct_form}
\end{equation}%

The remaining terms on the right-hand side of \eqref{eqn:interior_coefficients_recursion_middle} can be further simplified by substituting \eqref{eqn:interior_solution_coefficients_recursion}. Doing so reveals that
\begin{align}%
\Rone_1 \Rone_2 \sum_{k = j}^i \rone_{i,k} &= \sum_{k = j}^i \Rone_1^{k + 1} \pi_{(i - k,c - 1)}(\al) \Rone_2 \notag \\
&\quad + \sum_{k = j}^{i - 1} \sum_{l = k + 1}^i \! \Rone_1^{l + 1} \pi_{(i - l,c - 1)}(\al) \sum_{m = 1}^{l - k} \frac{k}{l - k} \binom{l - k}{m} \binom{l - 1}{m - 1} \Rone_2^{m + 1}\!. \label{eqn:interior_solution_coefficients_recursion_proof_second_term_1}
\end{align}%
Swapping the order of the triple summation in \eqref{eqn:interior_solution_coefficients_recursion_proof_second_term_1} gives
\begin{align}%
\eqref{eqn:interior_solution_coefficients_recursion_proof_second_term_1} &= \sum_{k = j}^i \Rone_1^{k + 1} \pi_{(i - k,c - 1)}(\al) \Rone_2 \notag \\
&\quad + \sum_{l = j + 1}^i \Rone_1^{l + 1} \pi_{(i - l,c - 1)}(\al) \sum_{m = 1}^{l - j} \sum_{k = j}^{l - m} \frac{k}{l - k} \binom{l - k}{m} \binom{l - 1}{m - 1} \Rone_2^{m + 1}. \label{eqn:interior_solution_coefficients_recursion_proof_second_term_2}
\end{align}%
The inner-most summation over $k$ of \eqref{eqn:interior_solution_coefficients_recursion_proof_second_term_2} can be evaluated using Lemma~\ref{lem:binomial_identity_summing_upper_index} of Appendix~\ref{app:combinatorial_identities}:
\begin{align}%
\sum_{k = j}^{l - m} \frac{k}{l - k} \binom{l - k}{m} = \frac{l - m + jm}{m(l + 1 - j)} \binom{l + 1 - j}{m + 1}. \label{eqn:interior_solution_coefficients_recursion_proof_second_term_3}
\end{align}%
Next, substitute \eqref{eqn:interior_solution_coefficients_recursion_proof_second_term_3} back into \eqref{eqn:interior_solution_coefficients_recursion_proof_second_term_2} and focus on the inner-most double summation of \eqref{eqn:interior_solution_coefficients_recursion_proof_second_term_2}. This gives
\begin{align}%
\sum_{m = 1}^{l - j} \sum_{k = j}^{l - m} \frac{k}{l - k} \binom{l - k}{m} \binom{l - 1}{m - 1} \Rone_2^{m + 1} &= \sum_{m = 1}^{l - j} \frac{l - m + jm}{m(l + 1 - j)} \binom{l + 1 - j}{m + 1} \binom{l - 1}{m - 1} \Rone_2^{m + 1} \notag \\
&= \sum_{m = 1}^{l - j} \frac{l - m + jm}{(l + 1 - j)l} \binom{l + 1 - j}{m + 1} \binom{l}{m} \Rone_2^{m + 1} \notag \\
&= \sum_{m = 2}^{l + 1 - j} \frac{l + 1 + jm - j - m}{(l + 1 - j)l} \binom{l + 1 - j}{m} \binom{l}{m - 1} \Rone_2^m. \label{eqn:interior_solution_coefficients_recursion_proof_second_term_4}
\end{align}%
Substituting \eqref{eqn:interior_solution_coefficients_recursion_proof_second_term_4} into \eqref{eqn:interior_solution_coefficients_recursion_proof_second_term_2} and changing the two outer summation indices shows
\begin{align}%
\eqref{eqn:interior_solution_coefficients_recursion_proof_second_term_2} &= \sum_{l = j + 1}^{i + 1} \Rone_1^l \pi_{(i + 1 - l,c - 1)}(\al) \Rone_2 \notag \\
&\quad + \sum_{l = j + 2}^{i + 1} \Rone_1^l \pi_{(i + 1 - l,c - 1)}(\al) \sum_{m = 2}^{l - j} \frac{l + jm - j - m}{(l - j)(l - 1)} \binom{l - j}{m} \binom{l - 1}{m - 1} \Rone_2^m. \label{eqn:interior_solution_coefficients_recursion_proof_second_term_5}
\end{align}%
Furthermore, since
\begin{align}%
\Rone_2 = \frac{l + j \cdot 1 - j - 1}{(l - j)(l - 1)} \binom{l - j}{1} \binom{l - 1}{1 - 1} \Rone_2^{1},
\end{align}%
we can merge the single summation with the double summation in \eqref{eqn:interior_solution_coefficients_recursion_proof_second_term_5}. In other words,
\begin{align}%
\eqref{eqn:interior_solution_coefficients_recursion_proof_second_term_5} &= \sum_{l = j + 1}^{i + 1} \Rone_1^l \pi_{(i + 1 - l,c - 1)}(\al) \sum_{m = 1}^{l - j} \frac{l + jm - j - m}{(l - j)(l - 1)} \binom{l - j}{m} \binom{l - 1}{m - 1} \Rone_2^m. \label{eqn:interior_solution_coefficients_recursion_proof_second_term_correct_form}
\end{align}%

Finally, summing \eqref{eqn:interior_solution_coefficients_recursion_proof_first_term_correct_form} and \eqref{eqn:interior_solution_coefficients_recursion_proof_second_term_correct_form} produces \eqref{eqn:interior_solution_coefficients_recursion}, as
\begin{align}%
\Rone_1 \rone_{i,j - 1} + \Rone_1 \Rone_2 \sum_{k = j}^i \rone_{i,k} &= \Rone_1^j \pi_{(i + 1 - j,c - 1)}(\al) \notag \\
&\quad + \sum_{k = j + 1}^{i + 1} \Rone_1^k \pi_{(i + 1 - k,c - 1)}(\al) \sum_{l = 1}^{k - j} \frac{(j - 1)(k - l)}{(k - j)(k - 1)} \binom{k - j}{l} \binom{k - 1}{l - 1} \Rone_2^l \notag \\
&\quad + \sum_{k = j + 1}^{i + 1} \Rone_1^k \pi_{(i + 1 - k,c - 1)}(\al) \sum_{l = 1}^{k - j} \frac{k + jl - j - l}{(k - j)(k - 1)} \binom{k - j}{l} \binom{k - 1}{l - 1} \Rone_2^l.
\end{align}%
Summing the coefficients in front of the binomial coefficient terms proves case (ii). Cases (i) and (iii) follow similarly.
\end{proof}%

We now have an explicit expression for the coefficients. Substitute the expressions for $\{ \rone_{i,j} \}_{i \ge j \ge 0}$ into \eqref{eqn:interior_third_recursion} to obtain, for $j \ge 1$,
\begin{align}%
&\pi_{(i,c - 1 + j)}(\al) = \sum_{k = 0}^i \pi_{(i - k,c - 1)}(\al) \bigl( \Rone_1 (1 - \Rone_2) \bigr)^k \binom{j - 1 + k}{k} r_2^j \notag \\
&\quad + \sum_{k = 0}^{i - 1} \sum_{l = k + 1}^i \pi_{(i - l,c - 1)}(\al) \Rone_1^l \sum_{m = 1}^{l - k} \frac{k}{l - k} \binom{l - k}{m} \binom{l - 1}{m - 1} \Rone_2^m (1 - \Rone_2)^k \binom{j - 1 + k}{k} r_2^j. \label{eqn:interior_fourth_recursion_1}
\end{align}%
Swapping the order of the double summation and grouping coefficients in front of each Laplace transform reveals the dependence of $\pi_{(i,c - 1 + j)}(\al)$ on $\pi_{(0,c - 1)}(\al),\pi_{(1,c - 1)}(\al),\ldots,\pi_{(i,c - 1)}(\al)$:
\begin{align}%
&\pi_{(i,c - 1 + j)}(\al) = r_2^j \pi_{(i,c - 1)}(\al) + \sum_{l = 1}^i \Rone_1^l \pi_{(i - l,c - 1)}(\al) \Bigl[ ( 1 - \Rone_2)^l \binom{j - 1 + l}{l} r_2^j \notag \\
&\quad + \sum_{k = 0}^{l - 1} (1 - \Rone_2)^k \binom{j - 1 + k}{k} r_2^j \sum_{m = 1}^{l - k} \frac{k}{l - k} \binom{l - k}{m} \binom{l - 1}{m - 1} \Rone_2^m \Bigr]. \label{eqn:interior_fourth_recursion_2}
\end{align}%
From this expression, we see that for each fixed $i \ge 0$, as $j \rightarrow \infty$, $\pi_{(i, c - 1 + j)}(\al)$ behaves in a manner analogous to that found in Theorem 3.1 of \cite{Li2009_M-M-1_2-class_prio_tail_asymptotics}, which addresses, when $c = 1$, the asymptotic behavior of the stationary distribution as the number of high-priority customers approaches infinity, while the number of low-priority customers is fixed.

The explicit expression \eqref{eqn:interior_fourth_recursion_2} can be used to obtain an expression for the Laplace transforms of the number of class-1 customers in the system. That is,
\begin{align}%
\int_0^\infty \euler^{-\al t} \Prob{ X_1(t) = i \mid X(0) = (0,0) } \, \dinf t &= \int_0^\infty \euler^{-\al t} \sum_{j = 0}^\infty \Prob{ X(t) = (i,j) \mid X(0) = (0,0) } \, \dinf t \notag \\
&= \sum_{j = 0}^\infty \pi_{(i,j)}(\al)
= \sum_{j = 0}^{c - 1} \pi_{(i,j)}(\al) + \sum_{j = 1}^\infty \pi_{(i,c - 1 + j)}(\al),
\end{align}%
where we can simplify the final infinite sum as
\begin{align}%
&\sum_{j = 1}^\infty \pi_{(i,c - 1 + j)}(\al) = \frac{r_2}{1 - r_2} \pi_{(i, c - 1)}(\al) + \sum_{l = 1}^i \Rone_1^l \pi_{(i - l,c - 1)}(\al) \Bigl[  \frac{(1 - \Rone_2)^l r_2}{(1 - r_2)^{l + 1}} \notag \\
&\quad + \sum_{k = 0}^{l - 1} \frac{(1 - \Rone_2)^k r_2}{(1 - r_2)^{k + 1}} \sum_{m = 1}^{l - k} \frac{k}{l - k} \binom{l - k}{m} \binom{l - 1}{m - 1} \Rone_2^m \Bigr], \label{eqn:summation_Laplace_transforms_upper_level_i}
\end{align}%
via the identity
\begin{equation}%
\sum_{j = 1}^\infty \binom{j - 1 + k}{k} r_2^j = \frac{r_2}{(1 - r_2)^{k + 1}}.
\end{equation}%
%


\section{Laplace transforms for states in the horizontal boundary}%
\label{sec:horizontal_boundary}%

In the previous section we showed how to express each Laplace transform for the states on the vertical boundary and within the interior explicitly in terms of transforms for the states in the horizontal boundary. So, it remains to determine the transforms for the states in the horizontal boundary. In this section, we show that the latter Laplace transforms satisfy a variant of Ramaswami's formula, which will allow us to numerically compute these transforms recursively.

The approach we use to compute the above-mentioned variant of Ramaswami's formula makes, like the CAP method, repeated use of Theorem~\ref{thm:taboo_transition_functions}. This approach is highly analogous to the approach used in \cite{Joyner2016_Block-structured_Markov_processes} to study block-structured Markov processes, yet slightly modified since we are interested in recursively computing Laplace transforms only associated with states within the horizonal boundary. This idea of restricting ourselves to a subset of the state space seems similar in spirit to the censoring approach featured in the work of Li and Zhao \cite{Li2004_RG_factorization}, but it is not currently obvious to the authors if this approach is applicable to our setting.

We first introduce some relevant notation. Define the $1 \times c$ row vectors $\bm{\pi}_i(\al)$ as
\begin{equation}%
\bm{\pi}_i(\al) \defi \begin{bmatrix} \pi_{(i,0)}(\al) & \pi_{(i,1)}(\al) & \cdots & \pi_{(i,c - 1)}(\al) \end{bmatrix}, \quad i \ge 0.
\end{equation}%
To properly state the Ramaswami-like formula satisfied by these row vectors, we need to define additional matrices. First, we define the $c \times c$ transition rate submatrices corresponding to lower levels $\lvlb{i}, ~ i \ge c$ as $\mtrx{A}_1 \defi \la_1 \mtrx{I}$, $\mtrx{A}_{-1} \defi \diag{c\mu_1,(c - 1)\mu_1,\ldots,\mu_1}$ and
\begin{equation}%
\mtrx{A}_0 \defi \begin{bmatrix}%
-\la_2 & \la_2 \\
\mu_2  & -(\la_2 + \mu_2) & \la_2 \\
       & 2 \mu_2          & -(\la_2 + 2 \mu_2) & \la_2 \\
       &                  &                    & \ddots       & \\
       &                  &                    & (c - 1)\mu_2 & -(\la_2 + (c - 1)\mu_2)
\end{bmatrix} - \mtrx{A}_1 - \mtrx{A}_{-1},
\end{equation}%
where $\mtrx{I}$ is the $c \times c$ identity matrix and $\diag{\vc{x}}$ is a square matrix with the vector $\vc{x}$ along its main diagonal. We further define the $c \times c$ level-dependent transition rate submatrices associated with $\lvlb{i}, ~ 1 \le i \le c - 1$ as $\mtrx{A}_{-1}^{(i)} \defi \diag{\vc{x}^{(i)}}$ with $(\vc{x}^{(i)})_j \defi \min(i,c - j) \mu_1, ~ 0 \le j \le c - 1$, $\mtrx{A}_0^{(i)} \defi \mtrx{A}_0 + \mtrx{A}_{-1} - \mtrx{A}_{-1}^{(i)}$, and for $\lvlb{0}$ we have $\mtrx{A}^{(0)}_0 \defi \mtrx{A}_0 + \mtrx{A}_{-1}$.

Next, we define the collection of $c \times c$ matrices $\{ \mtrx{W}_m(\al) \}_{m \ge 0}$. Each element of $\mtrx{W}_{m}(\al)$ is equal to 0 except for element $\bigl( \mtrx{W}_m(\al) \bigr)_{c - 1,c - 1}$, which is defined as $\bigl( \mtrx{W}_m(\al) \bigr)_{c - 1,c - 1} \defi \la_2 \rtwo{\la_2,c\mu_2,\la_1}{m}{\al}$.

We also need the collection of $c \times c$ matrices $\{ \mtrx{G}_{i,j}(\al) \}_{i > j \ge 0}$, where the $(k,l)$-th element of $\mtrx{G}_{i,j}(\al)$ is defined as
\begin{equation}%
\bigr( \mtrx{G}_{i,j}(\al) \bigl)_{k,l} \defi \E{ (i,k) }{ \euler^{-\al \tau_{\lvlb{j}}} \ind{ X(\tau_{\lvlb{j}}) = (j,l) } }, \quad 0 \le k,l \le c - 1.
\end{equation}%
Finally, we will need the collection of $c \times c$ matrices $\{ \mtrx{N}_i(\al) \}_{i \ge 1}$, whose elements are defined as follows:
\begin{equation}%
\bigl( \mtrx{N}_i(\al) \bigr)_{k,l} \defi \Efxd{ (i,k) }{ \int_0^{\tau_{\lvlb{i - 1}}} \euler^{-\al t} \ind{X(t) = (i,l)} \, \dinf t }, \quad 0 \le k,l \le c - 1.
\end{equation}%
Note that $\mtrx{N}_i(\al) = \mtrx{N}_c(\al)$ for $i \ge c$, and we therefore denote $\mtrx{N}(\al) \defi \mtrx{N}_c(\al)$.


\subsection{A Ramaswami-like recursion}%
\label{subsec:Ramaswami-like_recursion}%

The following theorem shows that the vectors of transforms $\{\bm{\pi}_i(\al)\}_{i \ge 0}$ satisfy a recursion analogous to Ramaswami's formula \cite{Ramaswami1988_Matrix-analytic_stable_recursion}.

\begin{theorem}[Horizontal boundary]\label{thm:horizontal_boundary}
For each integer $i \ge 0$, we have
\begin{equation}%
\bm{\pi}_{i + 1}(\al) = \bm{\pi}_i(\al) \mtrx{A}_1 \mtrx{N}_{i + 1}(\al) + \sum_{k = 0}^i \bm{\pi}_k(\al) \sum_{l = i + 1}^\infty \mtrx{W}_{l - k}(\al) \mtrx{G}_{l,i + 1}(\al) \mtrx{N}_{i + 1}(\al), \label{eqn:first_Ramaswami_recursion}
\end{equation}%
where we use the convention $\mtrx{G}_{i + 1,i + 1}(\al) = \mtrx{I}$.
\end{theorem}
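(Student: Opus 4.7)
The plan is to apply Theorem~\ref{thm:taboo_transition_functions} with $x = \ori$, $A = \lvlbunion{i}$, and $y = (i+1, l)$ for $0 \le l \le c - 1$, and then to identify the resulting expression with the $l$-th component of the right-hand side of \eqref{eqn:first_Ramaswami_recursion}. Scanning the rate matrix shows that the only transitions $z \to z'$ with $z \in A$, $z' \in A^c$, and $q(z,z') > 0$ come from class-1 arrivals $(i, j) \to (i + 1, j)$ at rate $\la_1$ (for $0 \le j \le c - 1$) and from class-2 arrivals $(k, c - 1) \to (k, c)$ at rate $\la_2$ (for $0 \le k \le i$), so the double sum in \eqref{eqn:taboo_transition_functions_double_summation} splits into two pieces that I shall match, respectively, with the two terms of \eqref{eqn:first_Ramaswami_recursion}.

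For the class-1 piece I use the observation that any first transition into $\lvlbunion{i}$ starting from a state with $X_1 = i + 1$ must take the form $(i + 1, j') \to (i, j')$ via class-1 service, because transitions change $X_1$ by at most one and the only transition decreasing $X_1$ is class-1 service. Thus $\tau_{\lvlbunion{i}} = \tau_{\lvlb{i}}$ under $\Prob{(i + 1, j)}{\cdot}$ and the inner expectation reduces to $(\mtrx{N}_{i + 1}(\al))_{j, l}$; summing over $j$ and using $\mtrx{A}_1 = \la_1 \mtrx{I}$ produces the first term $\bm{\pi}_i(\al) \mtrx{A}_1 \mtrx{N}_{i + 1}(\al)$.

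The class-2 piece is the main obstacle. Starting from $(k, c) \in \lvli{k}$, the key observation is that while $X_2 \ge c$ the class-1 service rate vanishes, so $X_2$ behaves as an $M/M/1$ queue with arrival rate $\la_2$ and service rate $c\mu_2$, while $X_1$ is incremented by an independent Poisson process of rate $\la_1$. Letting $\sigma_1 \defi \inf\{t > 0 : X_2(t) < c\}$, these dynamics give that $\sigma_1$ is equal in distribution to $B_{\la_2, c\mu_2}$ and that $X(\sigma_1) = (k + m, c - 1)$, where $m$ counts the class-1 arrivals on $[0, \sigma_1]$, so that by \eqref{eqn:definition_busy_period_Poisson_points}
\[
\Efxd{(k, c)}{\euler^{-\al \sigma_1} \ind{X(\sigma_1) = (k + m, c - 1)}} = \rtwo{\la_2, c\mu_2, \la_1}{m}{\al}.
\]
Setting $l' = k + m$ and applying the strong Markov property at $\sigma_1$, the summand vanishes for $l' \le i$ (since then $X(\sigma_1) \in \lvlbunion{i}$ forces $\sigma_1 = \tau_{\lvlbunion{i}}$), while for $l' \ge i + 1$ a second strong Markov step at $\tau_{\lvlb{i + 1}}$ (together with the same argument $\tau_{\lvlbunion{i}} = \tau_{\lvlb{i}}$ used above, and the vanishing of $\ind{X(t) = (i + 1, l)}$ before $\tau_{\lvlb{i + 1}}$) yields
\[
\Efxd{(l', c - 1)}{\int_0^{\tau_{\lvlbunion{i}}} \euler^{-\al t} \ind{X(t) = (i + 1, l)} \, \dinf t} = \sum_{m' = 0}^{c - 1} (\mtrx{G}_{l', i + 1}(\al))_{c - 1, m'} (\mtrx{N}_{i + 1}(\al))_{m', l},
\]
where the convention $\mtrx{G}_{i + 1, i + 1}(\al) = \mtrx{I}$ accommodates the boundary case $l' = i + 1$.

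It remains to reassemble these factors in matrix form. Since $\mtrx{W}_{l' - k}(\al)$ has its unique nonzero entry at $(c - 1, c - 1)$ equal to $\la_2 \rtwo{\la_2, c\mu_2, \la_1}{l' - k}{\al}$, the product $\pi_{(k, c - 1)}(\al) \la_2 \rtwo{\la_2, c\mu_2, \la_1}{l' - k}{\al}$ is precisely the $(c - 1)$-th entry of $\bm{\pi}_k(\al) \mtrx{W}_{l' - k}(\al)$. Multiplying by $\mtrx{G}_{l', i + 1}(\al)$ and $\mtrx{N}_{i + 1}(\al)$ and then summing over $k \in \{0, \ldots, i\}$ and $l' \in \{i + 1, i + 2, \ldots\}$ packages the class-2 contribution into $\sum_{k = 0}^i \sum_{l' = i + 1}^\infty \bm{\pi}_k(\al) \mtrx{W}_{l' - k}(\al) \mtrx{G}_{l', i + 1}(\al) \mtrx{N}_{i + 1}(\al)$, which when combined with the class-1 term yields \eqref{eqn:first_Ramaswami_recursion}.
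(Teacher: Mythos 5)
Your proposal is correct and follows essentially the same route as the paper's proof: apply Theorem~\ref{thm:taboo_transition_functions} with $A = \lvlbunion{i}$, split the boundary transitions into the class-1 exits $(i,j) \to (i+1,j)$ and the class-2 exits $(k,c-1) \to (k,c)$, recognize the class-2 piece as a busy period of an $M/M/1$ with arrival rate $\la_2$ and service rate $c\mu_2$ with Poisson $\la_1$ marks (giving $\rtwo{\la_2,c\mu_2,\la_1}{m}{\al}$), and then use the strong Markov property at $\tau_{\ph{c-1}}$ (your $\sigma_1$) and at $\tau_{\lvlb{i+1}}$ to produce the factors $\mtrx{G}_{l,i+1}(\al)$ and $\mtrx{N}_{i+1}(\al)$. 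You spell out the strong Markov decomposition behind the identity $\Efxd{(l',c-1)}{\int_0^{\tau_{\lvlbunion{i}}}\euler^{-\al t}\ind{X(t)=(i+1,l)}\dinf t} = (\mtrx{G}_{l',i+1}(\al)\mtrx{N}_{i+1}(\al))_{c-1,l}$ more explicitly than the paper, which simply cites the definitions, but the content is the same.
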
%

\begin{proof}%
This result can be proven by making use of the approach found in \cite{Joyner2016_Block-structured_Markov_processes}. Using Theorem~\ref{thm:taboo_transition_functions} with $A = \lvlbunion{i}$, we see that for $i \ge 0$ and $0 \le j \le c - 1$,
\begin{equation}%
\pi_{(i + 1,j)}(\al) = \sum_{z \in \lvlbunion{i}} \pi_z(\al) \sum_{z' \in \lvlbunion{i}^c} q(z,z') \Efxd{ z' }{ \int_0^{\tau_{\lvlbunion{i}}} \euler^{-\al t} \ind{ X(t) = (i + 1,j)} \, \dinf t }. \label{eqn:horizontal_boundary_general}
\end{equation}%
Due to the structure of the transition rates, many terms in the summation of \eqref{eqn:horizontal_boundary_general} are zero. In particular, \eqref{eqn:horizontal_boundary_general} can be stated more explicitly as
\begin{align}%
\pi_{(i + 1,j)}(\al) &= \sum_{k = 0}^{i} \pi_{(k,c - 1)}(\al) \la_2 \Efxd{ (k,c) }{ \int_0^{\tau_{\lvlbunion{i}}} \euler^{-\al t} \ind{ X(t) = (i + 1,j)} \, \dinf t } \notag \\
&\quad + \sum_{m = 0}^{c - 1} \pi_{(i,m)}(\al) \la_1 \Efxd{ (i + 1,m) }{ \int_0^{\tau_{\lvlbunion{i}}} \euler^{-\al t} \ind{ X(t) = (i + 1,j)} \, \dinf t }. \label{eqn:horizontal_boundary_specific}
\end{align}%
We now simplify each expectation appearing within the first sum on the right-hand side of \eqref{eqn:horizontal_boundary_specific}. Summing over all ways in which the process reaches phase $c - 1$ again yields
\begin{align}%
&\Efxd{ (k,c) }{ \int_0^{\tau_{\lvlbunion{i}}} \euler^{-\al t} \ind{ X(t) = (i + 1,j)} \, \dinf t } \notag \\
&= \Efxd{ (k,c) }{ \int_{\tau_{\ph{c - 1}}}^{\tau_{\lvlbunion{i}}} \euler^{-\al t} \ind{ X(t) = (i + 1,j)} \, \dinf t } \notag \\
&= \sum_{l = i + 1}^\infty \Efxd{ (k,c) }{ \ind{X(\tau_{\ph{c - 1}}) = (l,c - 1)} \euler^{-\al \tau_{\ph{c - 1}}} \int_{\tau_{\ph{c - 1}}}^{\tau_{\lvlbunion{i}}} \euler^{-\al (t - \tau_{\ph{c - 1}})} \ind{ X(t) = (i + 1,j)} \, \dinf t }. \label{eqn:horizontal_boundary_upward_jump_enter_phase_c-1_1}
\end{align}%
Applying the strong Markov property to each expectation appearing in \eqref{eqn:horizontal_boundary_upward_jump_enter_phase_c-1_1} shows that
\begin{align}%
\eqref{eqn:horizontal_boundary_upward_jump_enter_phase_c-1_1} &= \sum_{l = i + 1}^\infty \Efxd{ (k,c) }{ \ind{X(\tau_{\ph{c - 1}}) = (l,c - 1)} \euler^{-\al \tau_{\ph{c - 1}}}} \Efxd{ (l,c - 1) }{ \int_0^{\tau_{\lvlbunion{i}}} \euler^{- \al t} \ind{ X(t) = (i + 1,j)} \, \dinf t } \notag \\
&= \sum_{l = i + 1}^\infty \rtwo{\la_2,c\mu_2,\la_1}{l - k}{\al} \bigl( \mtrx{G}_{l,i + 1}(\al) \mtrx{N}_{i + 1}(\al) \bigr)_{c - 1,j}, \label{eqn:horizontal_boundary_upward_jump_enter_phase_c-1}
\end{align}%
where the last equality follows from the definitions of $\mtrx{G}_{l,i + 1}(\al)$ and $\mtrx{N}_{i + 1}(\al)$, and Lemma~\ref{lem:recursion_busy_period_Poisson_points_solution} of Appendix~\ref{app:single-server_queues}. The expectations appearing within the second sum of \eqref{eqn:horizontal_boundary_specific} can easily be simplified by recognizing that they are elements of $\mtrx{N}_{i + 1}(\al)$. Hence, we ultimately obtain
\begin{align}%
\pi_{(i + 1,j)}(\al) &= \sum_{k = 0}^{i} \pi_{(k,c - 1)}(\al) \sum_{l = i + 1 }^\infty \la_2 \rtwo{\la_2,c\mu_2,\la_1}{l - k}{\al} \bigl( \mtrx{G}_{l,i + 1}(\al) \mtrx{N}_{i + 1}(\al) \bigr)_{c - 1,j} \notag \\
&\quad + \sum_{m = 0}^{c - 1} \pi_{(i,m)}(\al) \bigl( \mtrx{A}_1 \bigr)_{m,m} \bigl( \mtrx{N}_{i + 1}(\al) \bigr)_{m,j}, \label{eqn:horizontal_boundary_pi_explicit_expression_element}
\end{align}%
which, in matrix form, is \eqref{eqn:first_Ramaswami_recursion}.
\end{proof}%

It remains to derive computable representations of $\{ \mtrx{G}_{i,j}(\al) \}_{i > j \ge 0}$, as well as the matrices $\{ \mtrx{N}_{i}(\al) \}_{1 \le i \le c}$.


\subsection{Computing the $\mtrx{G}_{i,j}(\al)$ matrices}%
\label{subsec:computing_G_matrices}%

The next proposition shows that each $\mtrx{G}_{i,j}(\al)$ matrix can be expressed entirely in terms of the subset $\{\mtrx{G}_{i + 1,i}(\al)\}_{0 \le i \le c-1}$.

\begin{proposition}\label{prop:horizontal_boundary_G_strong_Markov_homogeneous}%
For each pair of integers $i,j$ satisfying $i > j \ge 0$, we have
\begin{equation}%
\mtrx{G}_{i,j}(\al) = \mtrx{G}_{i,i - 1}(\al) \mtrx{G}_{i - 1,i - 2}(\al) \cdots \mtrx{G}_{j + 1,j}(\al). \label{eqn:horizontal_boundary_G_large_jumps_express_in_terms_of_small_jumps}
\end{equation}%
Furthermore, for each integer $k \ge 0$ we also have
\begin{equation}%
\mtrx{G}_{c + k,c - 1 + k}(\al) = \mtrx{G}_{c,c - 1}(\al).
\label{eqn:horizontal_boundary_G_homogeneous_after_c}
\end{equation}%
\end{proposition}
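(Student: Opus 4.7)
The plan is to handle the two identities separately. The product formula \eqref{eqn:horizontal_boundary_G_large_jumps_express_in_terms_of_small_jumps} will fall out of a repeated application of the strong Markov property at successive downward hitting times, while the homogeneity identity \eqref{eqn:horizontal_boundary_G_homogeneous_after_c} will follow from the spatial invariance of the transition rates of $X$ in the first coordinate on the set $\{i \ge c\}$.

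For \eqref{eqn:horizontal_boundary_G_large_jumps_express_in_terms_of_small_jumps}, the key structural observation---readable directly from the transition rates listed in Section~\ref{sec:model_description}---is that the first coordinate of $X$ decreases only in unit steps, and only from states in some $\lvlb{i}$, since the class-1 service rate $\max(\min(i, c-j), 0)\mu_1$ vanishes throughout each upper level $\lvli{i}$. Starting from any state in $\lvlb{i}$, the process must therefore enter $\lvlb{i-1}$ strictly before it can enter any $\lvlb{m}$ with $m < i - 1$. Splitting the indicator $\ind{X(\tau_{\lvlb{j}}) = (j, l)}$ over the possible phases in which $\lvlb{i-1}$ is first reached, and applying the strong Markov property at $\tau_{\lvlb{i-1}}$, yields
\begin{equation*}
\mtrx{G}_{i, j}(\al) = \mtrx{G}_{i, i-1}(\al) \mtrx{G}_{i-1, j}(\al), \quad i - 1 > j \ge 0,
\end{equation*}
with the convention $\mtrx{G}_{i-1, i-1}(\al) = \mtrx{I}$ handling the base case $i - 1 = j$. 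Iterating (equivalently, inducting on $i - j$) produces the telescoping product in \eqref{eqn:horizontal_boundary_G_large_jumps_express_in_terms_of_small_jumps}.

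For \eqref{eqn:horizontal_boundary_G_homogeneous_after_c}, the strategy is to identify the two sides as hitting transforms of two processes whose dynamics are related by a shift of the first coordinate. Before $\tau_{\lvlb{c-1+k}}$, the process started at $(c+k, k')$ remains in the set $S_k \defi \{(i, j) : i \ge c+k, j \ge 0\} \cup \{(c-1+k, j) : j \ge c\}$. A direct inspection of the transition rates shows that on $S_k$ the class-1 service rate equals $(c - j)\mu_1$ when $j \le c - 1$ and $0$ when $j \ge c$, while the class-2 service rate and both arrival rates depend only on $j$; none of these rates depends on $i$. Hence the shift $(i, j) \mapsto (i - k, j)$ is a transition-rate-preserving bijection from $S_k$ onto $S_0$ that sends $\lvlb{c-1+k}$ onto $\lvlb{c-1}$, and therefore
\begin{equation*}
\E{(c+k, k')}{\euler^{-\al \tau_{\lvlb{c-1+k}}} \ind{X(\tau_{\lvlb{c-1+k}}) = (c-1+k, l)}} = \E{(c, k')}{\euler^{-\al \tau_{\lvlb{c-1}}} \ind{X(\tau_{\lvlb{c-1}}) = (c-1, l)}}
\end{equation*}
for each $0 \le k', l \le c - 1$, which is the entrywise statement of \eqref{eqn:horizontal_boundary_G_homogeneous_after_c}.

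The main obstacle I expect is the careful justification of the shift argument underlying \eqref{eqn:horizontal_boundary_G_homogeneous_after_c}, where one must verify that there really is no residual first-coordinate dependence in the class-1 service rate anywhere on $S_k$, including the slightly awkward boundary states $(c - 1 + k, j)$ with $j \ge c$ that sit on the interface between $\lvlb{c-1+k}$ and $\lvli{c-1+k}$. Once this uniformity is in hand, both identities reduce to routine Markov-chain bookkeeping.
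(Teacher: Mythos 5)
Your proof is correct and takes essentially the same route as the paper, which merely records that \eqref{eqn:horizontal_boundary_G_large_jumps_express_in_terms_of_small_jumps} follows by iterating the strong Markov property (valid precisely because the level coordinate is skip-free downward, as you observe) and that \eqref{eqn:horizontal_boundary_G_homogeneous_after_c} follows from the homogeneity of the generator on the lower levels $\lvlb{i}$ with $i \ge c$, which is exactly your shift-invariance argument. Your closing worry about the interface states $(c-1+k,j)$ with $j \ge c$ is in fact moot: since the first coordinate can decrease only from a state with second coordinate $< c$, the first visit of the process to first coordinate $c-1+k$ necessarily lands in $\lvlb{c-1+k}$, so those states are unreachable before $\tau_{\lvlb{c-1+k}}$ and may be deleted from $S_k$ altogether.
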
%

\begin{proof}%
Equation~\eqref{eqn:horizontal_boundary_G_large_jumps_express_in_terms_of_small_jumps} can be derived by applying the strong Markov property in an iterative manner, while \eqref{eqn:horizontal_boundary_G_homogeneous_after_c} follows from the homogeneous structure of $X$ along all lower levels $\lvlb{i}, ~ i \ge c$.
\end{proof}%

In light of Proposition~\ref{prop:horizontal_boundary_G_strong_Markov_homogeneous}, our goal now is to determine matrices $\{ \mtrx{G}_{i + 1,i}(\al) \}_{0 \le i \le c - 1}$. We first focus on showing that $\mtrx{G}(\al) \defi \mtrx{G}_{c,c - 1}(\al)$ is the solution to a fixed-point equation.

\begin{proposition}\label{prop:horizontal_boundary_G_fixed-point_equation}%
The matrix $\mtrx{G}(\al)$ satisfies
\begin{equation}%
\mtrx{G}(\al) = \bigl( \al \mtrx{I} - \mtrx{A}_0 - \mtrx{W}_0(\al) \bigr)^{-1} \bigl( \mtrx{A}_{-1} + \mtrx{A}_1 \mtrx{G}(\al)^2 + \sum_{l = 1}^\infty \mtrx{W}_l(\al) \mtrx{G}(\al)^{l + 1} \bigr). \label{eqn:horizontal_boundary_G_explicit_expression}
\end{equation}%
\end{proposition}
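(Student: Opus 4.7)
My plan is to derive \eqref{eqn:horizontal_boundary_G_explicit_expression} via a first-step analysis on the discounted hitting matrix $\mtrx{G}(\al) = \mtrx{G}_{c, c-1}(\al)$, in the spirit of the classical QBD derivation of the fixed-point equation for the $\mtrx{G}$ matrix. Three ingredients drive the argument: the strong Markov property applied at the first jump of $X$; Proposition~\ref{prop:horizontal_boundary_G_strong_Markov_homogeneous}, which collapses consecutive first passages of the form $\lvlb{c+k+1} \to \lvlb{c+k}$ into powers of $\mtrx{G}(\al)$; and the $M/M/1$-excursion interpretation of the upper-level sojourn encoded in the definition of $\rtwo{\la_2, c\mu_2, \la_1}{l}{\al}$ at \eqref{eqn:definition_busy_period_Poisson_points}.

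Concretely, I would fix $(c, k) \in \lvlb{c}$ and $(c-1, l) \in \lvlb{c-1}$ and write the standard first-step identity $(\al + q((c, k))) (\mtrx{G}(\al))_{k, l} = \sum_{y} q((c, k), y)\, h_y$, where $h_y$ denotes the discounted hitting probability of $(c-1, l)$ from $y$ (equal to $\ind{y = (c-1, l)}$ when $y \in \lvlb{c-1}$). The neighbours of $(c, k)$ split into four families, and each contributes a recognisable matrix: the class-1 service to $(c-1, k)$ aggregates into $\mtrx{A}_{-1}$; the class-1 arrival to $(c+1, k)$, with $h_{(c+1, k)} = (\mtrx{G}(\al)^2)_{k, l}$ by Proposition~\ref{prop:horizontal_boundary_G_strong_Markov_homogeneous}, aggregates into $\mtrx{A}_1 \mtrx{G}(\al)^2$; and the within-$\lvlb{c}$ class-2 arrivals and services, after regrouping with the diagonal $-q((c, k)) (\mtrx{G}(\al))_{k, l}$, reassemble into $\mtrx{A}_0 \mtrx{G}(\al)$, which is really the content of the definition of $\mtrx{A}_0$. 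The only bookkeeping subtlety is that the $\la_2$-rate out of $(c, c-1)$ is still counted in $q((c, c-1))$, while its matching "destination contribution" is isolated in the fourth family below.

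The fourth family is the class-2 arrival from $(c, c-1)$ into $(c, c) \in \lvli{c}$. Once $X$ enters the upper level, its sojourn in $\lvli{c}$ is in distribution equal to an $M/M/1$ busy period with parameters $(\la_2, c\mu_2)$, observed jointly with an independent Poisson process of rate $\la_1$ counting the class-1 arrivals during the excursion; this is exactly the construction behind \eqref{eqn:definition_busy_period_Poisson_points}. Conditioning on the number $l' \ge 0$ of class-1 arrivals during the excursion, $X$ returns to the horizontal boundary at $(c + l', c-1)$, and Proposition~\ref{prop:horizontal_boundary_G_strong_Markov_homogeneous} together with one more application of the strong Markov property gives $h_{(c, c)} = \sum_{l' \ge 0} \rtwo{\la_2, c\mu_2, \la_1}{l'}{\al} (\mtrx{G}(\al)^{l'+1})_{c-1, l}$. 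In matrix form, and restricted to row $c-1$, this contribution equals $\sum_{l \ge 0} \mtrx{W}_l(\al) \mtrx{G}(\al)^{l+1}$. Assembling the four families yields $(\al \mtrx{I} - \mtrx{A}_0) \mtrx{G}(\al) = \mtrx{A}_{-1} + \mtrx{A}_1 \mtrx{G}(\al)^2 + \sum_{l = 0}^\infty \mtrx{W}_l(\al) \mtrx{G}(\al)^{l+1}$; moving $\mtrx{W}_0(\al) \mtrx{G}(\al)$ to the left-hand side and premultiplying by $(\al \mtrx{I} - \mtrx{A}_0 - \mtrx{W}_0(\al))^{-1}$ then delivers \eqref{eqn:horizontal_boundary_G_explicit_expression}.

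The main obstacle is the rigorous treatment of the excursion into $\lvli{c}$ and, in particular, the identification of the discounted "probability" of first re-entering the horizontal boundary at $(c + l', c-1)$ with $\rtwo{\la_2, c\mu_2, \la_1}{l'}{\al}$; this step relies on the $M/M/1$-with-Poisson-marking interpretation used to define the $w$-terms in \eqref{eqn:definition_busy_period_Poisson_points}. Subsidiary technical points, all routine, are the invertibility of $\al \mtrx{I} - \mtrx{A}_0 - \mtrx{W}_0(\al)$ on $\Complex_+$ (which follows from $\mtrx{A}_0$ being a strict sub-generator together with $|\la_2 \rtwo{\la_2, c\mu_2, \la_1}{0}{\al}| < \la_2$ when $\RealPart{\al} > 0$) and the convergence of the series $\sum_{l \ge 0} \mtrx{W}_l(\al) \mtrx{G}(\al)^{l+1}$, which reduces to summability of the $l'$-th term of a Laplace-transformed probability mass function.
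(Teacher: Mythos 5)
Your plan is correct, and the core of the argument — a first-step analysis on $\mtrx{G}(\al) = \mtrx{G}_{c,c-1}(\al)$ combined with the strong Markov property applied at $\tau_{\ph{c-1}}$ for the excursion into $\lvli{c}$, identifying the re-entry distribution with $\rtwo{\la_2,c\mu_2,\la_1}{l}{\al}$ and collapsing the subsequent first passages into powers of $\mtrx{G}(\al)$ via Proposition~\ref{prop:horizontal_boundary_G_strong_Markov_homogeneous} — is exactly the route taken in the paper. Your bookkeeping (the class-2 internal moves plus the diagonal reassembling into $\mtrx{A}_0\mtrx{G}(\al)$, the $\la_2$-rate out of $(c,c-1)$ being counted in the diagonal while its destination contribution is carried by $\sum_{l\ge 0}\mtrx{W}_l(\al)\mtrx{G}(\al)^{l+1}$) checks out. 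The one place where you take a genuinely different route is the invertibility of $\al\mtrx{I} - \mtrx{A}_0 - \mtrx{W}_0(\al)$: you argue via strict diagonal dominance, using that $-\mtrx{A}_0$ has strictly positive row sums, $\RealPart{\al} > 0$, and $|\la_2\rtwo{\la_2,c\mu_2,\la_1}{0}{\al}| < \la_2$ is absorbed by the slack $\la_2$ in row $c-1$. This is valid (a Gershgorin computation confirms it) and is arguably the quicker algebraic argument. The paper instead constructs the inverse explicitly, introducing a matrix $\mtrx{H}(\al)$ whose $(i,j)$-entry is $\Efxd{(c,i)}{\int_0^{\tau_{(\lvlb{c}\cup\lvli{c})^c}} \euler^{-\al t}\ind{X(t) = (c,j)}\,\dinf t}$ and verifying $(\al\mtrx{I} - \mtrx{A}_0 - \mtrx{W}_0(\al))\mtrx{H}(\al) = \mtrx{I}$ by another one-step analysis; this yields a probabilistic interpretation of the inverse consonant with the rest of the paper, and also hands you the matrix $\mtrx{N}_c(\al)$ needed later essentially for free, which the purely algebraic route does not. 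If you wanted to fully flesh out your version, you would need to state the diagonal-dominance computation explicitly, noting in particular that the modulus bound in row $c-1$ uses $\RealPart{\rtwo{\la_2,c\mu_2,\la_1}{0}{\al}} \le |\rtwo{\la_2,c\mu_2,\la_1}{0}{\al}| < 1$.
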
%

\begin{proof}%
Observe that for $0 \le i \le c - 2$ and $0 \le j \le c - 1$, a one-step analysis yields
\begin{align}%
\bigl( \mtrx{G}(\al) \bigr)_{i,j} &= \E{ (c,i) }{ \euler^{-\al \tau_{\lvlb{c - 1}}} \ind{ X(\tau_{\lvlb{c - 1}}) = (c - 1,j) } } \notag \\
&= \frac{(\mtrx{A}_{-1})_{i,i}}{- (\mtrx{A}_0)_{i,i} + \al} \ind{i = j} + \frac{(\mtrx{A}_{0})_{i,i + 1}}{- (\mtrx{A}_0)_{i,i} + \al} \bigl( \mtrx{G}(\al) \bigr)_{i + 1,j} \notag \\
&\quad + \ind{i \neq 0} \frac{(\mtrx{A}_{0})_{i,i - 1}}{- (\mtrx{A}_0)_{i,i} + \al} \bigl( \mtrx{G}(\al) \bigr)_{i - 1,j} + \frac{(\mtrx{A}_{1})_{i,i}}{- (\mtrx{A}_0)_{i,i} + \al} \bigl( \mtrx{G}(\al)^2 \bigr)_{i,j}. \label{eqn:horizontal_boundary_G_one-step_not_top}
\end{align}%
On the other hand, for $0 \le j \le c - 1$, we also have
\begin{align}%
\bigl( \mtrx{G}(\al) \bigr)_{c - 1,j} &= \E{ (c,c - 1) }{ \euler^{-\al \tau_{\lvlb{c - 1}}} \ind{ X(\tau_{\lvlb{c - 1}}) = (c - 1,j) } } \notag \\
&= \frac{(\mtrx{A}_{-1})_{c - 1,c - 1}}{- (\mtrx{A}_0)_{c - 1,c - 1} + \al} \ind{c - 1 = j} + \frac{(\mtrx{A}_{0})_{c - 1,c - 2}}{- (\mtrx{A}_0)_{c - 1,c - 1} + \al} \bigl( \mtrx{G}(\al) \bigr)_{c - 2,j} \notag \\
&\quad + \frac{(\mtrx{A}_{1})_{c - 1,c - 1}}{- (\mtrx{A}_0)_{c - 1,c - 1} + \al} \bigl( \mtrx{G}(\al)^2 \bigr)_{c - 1,j} \notag \\
&\quad + \frac{\la_2}{- (\mtrx{A}_0)_{c - 1,c - 1} + \al} \Efxd{ (c,c) }{ \euler^{-\al \tau_{\lvlb{c - 1}}} \ind{ X(\tau_{\lvlb{c - 1}}) = (c - 1,j) } }. \label{eqn:horizontal_boundary_G_one-step_top}
\end{align}%
To simplify \eqref{eqn:horizontal_boundary_G_one-step_top} further, notice that an application of the strong Markov property at the time $\tau_{\ph{c - 1}}$ produces
\begin{align}%
& \Efxd{ (c,c) }{ \euler^{-\al \tau_{\lvlb{c - 1}}} \ind{ X(\tau_{\lvlb{c - 1}}) = (c - 1,j) } } \notag \\
&= \sum_{l = 0}^\infty \Efxd{ (c,c) }{ \euler^{-\al \tau_{\ph{c - 1}}} \ind{X(\tau_{\ph{c - 1}}) = (c + l,c - 1)} \euler^{-\al ( \tau_{\lvlb{c - 1}} - \tau_{\ph{c - 1}} )} \ind{ X(\tau_{\lvlb{c - 1}}) = (c - 1,j) } } \notag \\
&= \sum_{l = 0}^\infty \Efxd{ (c,c) }{ \euler^{-\al \tau_{\ph{c - 1}}} \ind{X(\tau_{\ph{c - 1}}) = (c + l,c - 1)} } \Efxd{ (c + l,c - 1) }{ \euler^{-\al \tau_{\lvlb{c - 1}}} \ind{ X(\tau_{\lvlb{c - 1}}) = (c - 1,j) } } \notag \\
&= \sum_{l = 0}^\infty \rtwo{\la_2,c\mu_2,\la_1}{l}{\al} \bigl( \mtrx{G}(\al)^{l + 1} \bigr)_{c - 1,j}, \label{eqn:horizontal_boundary_G_entering_boundary_strip_of_states}
\end{align}%
where the last equality follows from Lemma~\ref{lem:recursion_busy_period_Poisson_points_solution} of Appendix~\ref{app:single-server_queues} and \eqref{eqn:horizontal_boundary_G_large_jumps_express_in_terms_of_small_jumps}.

Using \eqref{eqn:horizontal_boundary_G_entering_boundary_strip_of_states}, we can write \eqref{eqn:horizontal_boundary_G_one-step_not_top}-\eqref{eqn:horizontal_boundary_G_one-step_top} more elegantly in matrix form as
\begin{equation}%
\mtrx{0} = \mtrx{A}_{-1} + \bigl( \mtrx{A}_0 - \al \mtrx{I} \bigr) \mtrx{G}(\al) + \mtrx{A}_1 \mtrx{G}(\al)^2 + \sum_{l = 0}^\infty \mtrx{W}_l(\al) \mtrx{G}(\al)^{l + 1},
\end{equation}%
or, equivalently, assuming $(\al \mtrx{I} - \mtrx{A}_{0} - \mtrx{W}_{0}(\al))$ is invertible,
\begin{equation}%
\mtrx{G}(\al) = \bigl( \al \mtrx{I} - \mtrx{A}_0 - \mtrx{W}_0(\al) \bigr)^{-1} \bigl( \mtrx{A}_{-1} + \mtrx{A}_1 \mtrx{G}(\al)^2 + \sum_{l = 1}^\infty \mtrx{W}_l(\al) \mtrx{G}(\al)^{l + 1} \bigr).
\end{equation}%

It remains to show that the matrix $\al \mtrx{I} - \mtrx{A}_0 - \mtrx{W}_0(\al)$ is indeed invertible. Define a $c \times c$ matrix $\mtrx{H}(\al)$ with elements
\begin{equation}%
\bigl( \mtrx{H}(\al) \bigr)_{i,j} \defi \Efxd{ (c,i) }{\int_0^{\tau_{(\lvlb{c} \cup \lvli{c})^c}} \euler^{-\al t} \ind{X(t) = (c,j)} \, \dinf t }, \quad 0 \le i,j \le c - 1.
\end{equation}%
Again, using a one-step analysis and the strong Markov property at the first transition time $T_1$ yields, for $0 \le i,j \le c - 1$,
\begin{align}%
\bigl( \mtrx{H}(\al) \bigr)_{i,j} &= \Efxd{ (c,i) }{\int_{T_1}^{\tau_{(\lvlb{c} \cup \lvli{c})^c}} \euler^{-\al t} \ind{X(t) = (c,j)} \, \dinf t } + \Efxd{ (c,i) }{\int_0^{T_1} \euler^{-\al t} \ind{X(t) = (c,j)} \, \dinf t } \notag \\
&= \ind{i \neq 0} \frac{(\mtrx{A}_0)_{i,i - 1}}{-(\mtrx{A}_0)_{i,i} + \al} \bigl( \mtrx{H}(\al) \bigr)_{i - 1,j} + \ind{i \neq c - 1} \frac{(\mtrx{A}_0)_{i,i + 1}}{-(\mtrx{A}_0)_{i,i} + \al} \bigl( \mtrx{H}(\al) \bigr)_{i + 1,j} \notag \\
&\quad + \ind{i = c - 1} \frac{(\mtrx{W}_0)_{c - 1,c - 1}}{-(\mtrx{A}_0)_{c - 1,c - 1} + \al} \bigl( \mtrx{H}(\al) \bigr)_{c - 1,j} + \frac{\ind{i = j}}{-(\mtrx{A}_0)_{i,i} + \al}.
\end{align}%
This can be written in matrix form as
\begin{equation}%
\bigl( \al \mtrx{I} - \mtrx{A}_0 - \mtrx{W}_0(\al) \bigr) \mtrx{H}(\al) = \mtrx{I},
\end{equation}%
proving $\al \mtrx{I} - \mtrx{A}_0 - \mtrx{W}_0(\al)$ is invertible, with its inverse being $\mtrx{H}(\al)$.
\end{proof}%

The next proposition shows that through successive substitutions one can obtain $\mtrx{G}(\al)$ from \eqref{eqn:horizontal_boundary_G_explicit_expression}.

\begin{proposition}\label{prop:horizontal_boundary_G_iterative_scheme}%
Suppose the sequence of matrices $\{ \mtrx{Z}(n,\al) \}_{n \ge 0}$ satisfies the recursion
\begin{align}%
\mtrx{Z}(n + 1,\al) &= \bigl( \al \mtrx{I} - \mtrx{A}_0 - \mtrx{W}_0(\al) \bigr)^{-1} \bigl( \mtrx{A}_{-1} + \mtrx{A}_1 \mtrx{Z}(n,\al)^2 + \sum_{l = 1}^\infty \mtrx{W}_l(\al) \mtrx{Z}(n,\al)^{l + 1} \bigr) \label{eqn:horizontal_boundary_G_successive_substitutions}
\end{align}%
with initial condition $\mtrx{Z}(0,\al) = \mtrx{0}$. Then,
\begin{equation}%
\lim_{n \to \infty} \mtrx{Z}(n,\al) = \mtrx{G}(\al).
\end{equation}%
\end{proposition}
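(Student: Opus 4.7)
The plan is to work first with real $\al > 0$, where the iteration can be analysed via monotone convergence in the cone of entrywise nonnegative matrices, and then to extend the conclusion to $\al \in \Complex_{+}$ by analyticity. Writing $\mathcal{F}_\al(\mtrx{Z})$ for the right-hand side of \eqref{eqn:horizontal_boundary_G_successive_substitutions}, so that $\mtrx{Z}(n+1,\al) = \mathcal{F}_\al(\mtrx{Z}(n,\al))$, my first observation is that for real $\al > 0$ each of the matrices $\mtrx{A}_{-1}, \mtrx{A}_1$, and $\mtrx{W}_l(\al)$ has nonnegative entries, and the inverse $(\al\mtrx{I} - \mtrx{A}_0 - \mtrx{W}_0(\al))^{-1}$ coincides with the nonnegative matrix $\mtrx{H}(\al)$ constructed inside the proof of Proposition~\ref{prop:horizontal_boundary_G_fixed-point_equation}, so that $\mathcal{F}_\al$ is monotone on the cone of entrywise nonnegative matrices. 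Two straightforward inductions---starting from $\mtrx{Z}(0,\al) = \mtrx{0}$ and using monotonicity together with $\mathcal{F}_\al(\mtrx{G}(\al)) = \mtrx{G}(\al)$ (Proposition~\ref{prop:horizontal_boundary_G_fixed-point_equation})---then give
\begin{equation*}
\mtrx{Z}(n,\al) \le \mtrx{Z}(n+1,\al) \le \mtrx{G}(\al)
\end{equation*}
entrywise for every $n \ge 0$. The sequence therefore converges entrywise to some $\mtrx{Z}^{\star}(\al) \le \mtrx{G}(\al)$, and passing to the limit in \eqref{eqn:horizontal_boundary_G_successive_substitutions}---justified by monotone convergence applied to $\sum_{l}\mtrx{W}_l(\al)\mtrx{Z}(n,\al)^{l+1}$, which is dominated by the convergent $\sum_{l}\mtrx{W}_l(\al)\mtrx{G}(\al)^{l+1}$---shows that $\mtrx{Z}^{\star}(\al)$ solves the fixed-point equation \eqref{eqn:horizontal_boundary_G_explicit_expression}.

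The hard part will be upgrading $\mtrx{Z}^\star(\al) \le \mtrx{G}(\al)$ to an equality, since the nonlinear equation \eqref{eqn:horizontal_boundary_G_explicit_expression} need not have a unique solution. My plan is to give a probabilistic interpretation of the iterates: by induction on $n$, using a strong-Markov one-step analysis that mirrors the derivation of \eqref{eqn:horizontal_boundary_G_explicit_expression} in Proposition~\ref{prop:horizontal_boundary_G_fixed-point_equation}, I would prove
\begin{equation*}
\bigl(\mtrx{Z}(n,\al)\bigr)_{i,k} = \Efxd{(c,i)}{\euler^{-\al \tau_{\lvlb{c-1}}}\ind{X(\tau_{\lvlb{c-1}}) = (c-1,k),\,\mathcal{E}_n}},
\end{equation*}
where $\mathcal{E}_n$ is the event that the first-passage sample path, decomposed into nested excursions above level $c-1$ triggered by $\mtrx{A}_1$-arrivals and $\mtrx{W}_l$-type excursions into $\lvli{c}$, has recursion depth at most $n$. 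The base case $\mathcal{E}_0 = \emptyset$ matches $\mtrx{Z}(0,\al) = \mtrx{0}$, and the three summands inside $\mathcal{F}_\al$ track the three mutually exclusive ways the process can first leave level $c$: direct absorption into $\lvlb{c-1}$ (the $\mtrx{A}_{-1}$ term), a $\la_1$-arrival jumping to level $c+1$ followed by two depth-$\le n$ first passages (the $\mtrx{A}_1\mtrx{Z}(n)^2$ term), and a $\mtrx{W}_l$-type excursion taking the process to $(c+l,c-1)$ followed by $l+1$ depth-$\le n$ first passages. Since every finite first-passage sample path has finite nesting depth, $\mathcal{E}_n \uparrow \{\tau_{\lvlb{c-1}} < \infty\}$, and monotone convergence yields $\mtrx{Z}(n,\al) \uparrow \mtrx{G}(\al)$ entrywise, so $\mtrx{Z}^\star(\al) = \mtrx{G}(\al)$.

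Finally, to extend the conclusion to $\al \in \Complex_{+}$, I would appeal to analyticity. Each entry of $\mtrx{Z}(n,\al)$ is analytic in $\al$ on $\Complex_{+}$, and the probabilistic representation established above yields the pointwise bound
\begin{equation*}
\bigl|\bigl(\mtrx{Z}(n,\al)\bigr)_{i,k}\bigr| \le \bigl(\mtrx{Z}(n,\textup{Re}(\al))\bigr)_{i,k} \le \bigl(\mtrx{G}(\textup{Re}(\al))\bigr)_{i,k},
\end{equation*}
so $\{\mtrx{Z}(n,\cdot)\}_n$ is locally uniformly bounded on $\Complex_{+}$. Combined with the pointwise convergence already obtained on the positive real axis, Vitali's convergence theorem then promotes this to locally uniform convergence of $\mtrx{Z}(n,\al)$ to $\mtrx{G}(\al)$ throughout $\Complex_{+}$, completing the argument.
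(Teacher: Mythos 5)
The paper itself does not give an argument here: it points to Theorems~3.1 and~4.1 of Joyner and Fralix's $G/M/1$-type paper and Theorem~3.4 of their block-structured paper, and declares the proof ``completely analogous.'' What you have written is, in effect, the argument that is being cited, filled in for this particular fixed-point map, so you are not taking a genuinely different route --- you are supplying the route the paper leaves implicit. The strategy is sound: monotonicity of $\mathcal{F}_\al$ on the nonnegative cone for $\al>0$ (using that $(\al\mtrx{I}-\mtrx{A}_0-\mtrx{W}_0(\al))^{-1}=\mtrx{H}(\al)$ is entrywise nonnegative), the sandwich $\mtrx{Z}(n,\al)\le\mtrx{Z}(n+1,\al)\le\mtrx{G}(\al)$, identification of the limit with $\mtrx{G}(\al)$ by a first-passage depth decomposition, and then analytic continuation to $\Complex_+$. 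Two comments on where the sketch is thinnest. First, the load-bearing step is the probabilistic representation of $\mtrx{Z}(n,\al)$ as the restriction of the first-passage LST to paths of depth at most $n$; you state this as a plan rather than a proof, and the definition of the depth event $\mathcal{E}_n$ has to be pinned down carefully --- in particular, because $\mtrx{W}_0(\al)$ has been absorbed into the resolvent $\mtrx{H}(\al)$, excursions into $\lvli{c}$ that see no $\la_1$-arrival must \emph{not} consume depth, whereas both a $\la_1$-arrival in $\lvlb{c}$ and a $\mtrx{W}_l$-type excursion with $l\ge 1$ do, and each spawns the correct number ($2$, respectively $l+1$) of sub-passages of depth at most $n$. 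Once $\mathcal{E}_n$ is defined by exactly this recursion, the induction matches the iterate term by term, $\mathcal{E}_n\uparrow\{\tau_{\lvlb{c-1}}<\infty\}$ because a finite path has finitely many $\la_1$-arrivals, and monotone convergence closes the real case. Second, your Vitali step is a clean way to pass to $\Complex_+$, but it is somewhat redundant once you notice that the same probabilistic representation holds verbatim for complex $\al$ (the one-step/strong-Markov identities are algebraic, and the infinite sums in $\mathcal{F}_\al$ converge absolutely because $|\mtrx{Z}(n,\al)|\le\mtrx{Z}(n,\RealPart{\al})\le\mtrx{G}(\RealPart{\al})$ entrywise); then dominated convergence gives $\mtrx{Z}(n,\al)\to\mtrx{G}(\al)$ directly, with no appeal to normal families. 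Either closure is fine; just be aware that if you do invoke Vitali you also need to observe that $\al\mapsto\mtrx{G}(\al)$ is itself holomorphic on $\Complex_+$ (a standard Laplace-transform fact) so that the two limits can be identified by uniqueness of analytic continuation.
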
%

\begin{proof}%
This proof makes use of Proposition~\ref{prop:horizontal_boundary_G_fixed-point_equation}, and is completely analogous to the proofs of \cite[Theorems~3.1 and 4.1]{Joyner2016_G-M-1-type} and \cite[Theorem~3.4]{Joyner2016_Block-structured_Markov_processes}. It is therefore omitted.
\end{proof}%

Now that we have a method for approximating $\mtrx{G}(\al)$, it remains to find a method for computing $\mtrx{G}_{i + 1,i}(\al), ~ 0 \le i \le c - 2$. The next proposition shows that these matrices can be computed recursively.

\begin{proposition}\label{prop:horizontal_boundary_G_level-dependent}%
For each integer $i$ satisfying $0 \le i \le c - 2$, we have
\begin{equation}%
\mtrx{G}_{i + 1,i}(\al) = \Bigl( \al \mtrx{I} - \mtrx{A}^{(i + 1)}_0 - \mtrx{A}_1 \mtrx{G}_{i + 2,i + 1}(\al) - \sum_{l = i + 1}^\infty \mtrx{W}_{l - (i + 1)}(\al) \mtrx{G}_{l,i + 1}(\al) \Bigr)^{-1} \mtrx{A}^{(i + 1)}_{-1}. \label{eqn:horizontal_boundary_G_level-dependent_explicit_expression_in_theorem}
\end{equation}%
\end{proposition}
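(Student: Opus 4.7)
The plan is to derive the stated identity by a one-step analysis from states in $\lvlb{i+1}$, in close analogy with the proof of Proposition~\ref{prop:horizontal_boundary_G_fixed-point_equation}. Fixing $0 \le i \le c-2$, I would condition on the first transition out of state $(i+1,k)$, $0 \le k \le c-1$, and record four possibilities. First, a direct jump to $(i,k)\in\lvlb{i}$, occurring at rate $(\mtrx{A}_{-1}^{(i+1)})_{k,k}$, produces the constant term $\mtrx{A}_{-1}^{(i+1)}$ in the resulting matrix equation. Second, an intra-level jump within $\lvlb{i+1}$ is encoded by the off-diagonal entries of $\mtrx{A}_0^{(i+1)}$ and, by the strong Markov property, contributes $\mtrx{A}_0^{(i+1)}\mtrx{G}_{i+1,i}(\al)$ once the diagonal exit rates are absorbed into the factor $\al\mtrx{I}-\mtrx{A}_0^{(i+1)}$. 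Third, a jump at rate $\la_1$ to $(i+2,k)\in\lvlb{i+2}$, followed by strong Markov and \eqref{eqn:horizontal_boundary_G_large_jumps_express_in_terms_of_small_jumps}, contributes $\mtrx{A}_1\mtrx{G}_{i+2,i+1}(\al)\mtrx{G}_{i+1,i}(\al)$.

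The fourth contribution, active only at $k=c-1$, is a jump at rate $\la_2$ to $(i+1,c)\in\lvli{i+1}$. I would handle this exactly as in the derivation of \eqref{eqn:horizontal_boundary_G_entering_boundary_strip_of_states}: within $\lvli{i+1}$ the phase-dynamics mimic an $M/M/1$ queue with arrival rate $\la_2$ and service rate $c\mu_2$, and the $\la_1$-rate upward level transitions constitute an independent Poisson clearing process. Applying the strong Markov property at $\tau_{\ph{c-1}}$ and invoking Lemma~\ref{lem:recursion_busy_period_Poisson_points_solution} shows that the process re-enters phase $c-1$ at some level $l \ge i+1$ with Laplace weight $\rtwo{\la_2,c\mu_2,\la_1}{l-(i+1)}{\al}$, and from $(l,c-1)$ the passage to $\lvlb{i}$ contributes a further factor $\bigl(\mtrx{G}_{l,i+1}(\al)\mtrx{G}_{i+1,i}(\al)\bigr)_{c-1,\cdot}$. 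Since this contribution is active only in row $c-1$, and $\la_2\rtwo{\la_2,c\mu_2,\la_1}{m}{\al}$ is precisely the $(c-1,c-1)$-entry of $\mtrx{W}_m(\al)$, summing produces $\sum_{l=i+1}^\infty \mtrx{W}_{l-(i+1)}(\al)\mtrx{G}_{l,i+1}(\al)\mtrx{G}_{i+1,i}(\al)$ in matrix form. Collecting all four contributions and isolating $\mtrx{G}_{i+1,i}(\al)$ on the right then yields
\[
\Bigl(\al\mtrx{I}-\mtrx{A}_0^{(i+1)}-\mtrx{A}_1\mtrx{G}_{i+2,i+1}(\al)-\sum_{l=i+1}^\infty\mtrx{W}_{l-(i+1)}(\al)\mtrx{G}_{l,i+1}(\al)\Bigr)\mtrx{G}_{i+1,i}(\al)=\mtrx{A}_{-1}^{(i+1)},
\]
which is \eqref{eqn:horizontal_boundary_G_level-dependent_explicit_expression_in_theorem} modulo invertibility of the bracketed matrix.

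I expect invertibility to be the main obstacle, but it can be handled by the same device used in Proposition~\ref{prop:horizontal_boundary_G_fixed-point_equation}. Define the $c\times c$ matrix $\mtrx{H}^{(i+1)}(\al)$ with entries $\bigl(\mtrx{H}^{(i+1)}(\al)\bigr)_{k,k'}\defi\Efxd{(i+1,k)}{\int_0^{\tau_{\lvlb{i}}}\euler^{-\al t}\ind{X(t)=(i+1,k')}\,\dinf t}$, representing the Laplace transform of the occupation time in $(i+1,k')$ before absorption into $\lvlb{i}$. Running the same one-step analysis for $\mtrx{H}^{(i+1)}(\al)$, where the first case now contributes nothing to the occupation time, should produce the complementary identity
\[
\Bigl(\al\mtrx{I}-\mtrx{A}_0^{(i+1)}-\mtrx{A}_1\mtrx{G}_{i+2,i+1}(\al)-\sum_{l=i+1}^\infty\mtrx{W}_{l-(i+1)}(\al)\mtrx{G}_{l,i+1}(\al)\Bigr)\mtrx{H}^{(i+1)}(\al)=\mtrx{I},
\]
exhibiting the desired inverse. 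Convergence of the series over $l$, needed for both identities, should follow from $|\phi_{\la_2,c\mu_2}(\la_1+\al)|<1$ on $\Complex_+$ (which controls $\rtwo{\la_2,c\mu_2,\la_1}{m}{\al}$) combined with the sub-stochastic nature of the $\mtrx{G}_{l,i+1}(\al)$.
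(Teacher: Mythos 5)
Your proposal follows the paper's proof almost exactly: a one-step analysis from states in $\lvlb{i+1}$, decomposed by where the first transition goes, with the strong Markov property applied at $\tau_{\ph{c-1}}$ and Lemma~\ref{lem:recursion_busy_period_Poisson_points_solution} supplying the $\rtwo{\la_2,c\mu_2,\la_1}{m}{\al}$ weights. Your intermediate bookkeeping is, if anything, slightly more careful than the paper's: you correctly keep the terminal factor $\mtrx{G}_{i+1,i}(\al)$ in the $\la_2$-contribution, i.e.\ you obtain $\sum_{l \ge i+1}\mtrx{W}_{l-(i+1)}(\al)\mtrx{G}_{l,i+1}(\al)\mtrx{G}_{i+1,i}(\al)$, whereas the paper's displayed one-step equation and its matrix reformulation both appear to drop that trailing $\mtrx{G}_{i+1,i}(\al)$ (a minor typographical slip, since the final statement \eqref{eqn:horizontal_boundary_G_level-dependent_explicit_expression_in_theorem} only follows when that factor is present so that $\mtrx{G}_{i+1,i}(\al)$ can be isolated on the right).

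The one genuine difference is in how invertibility is handled. The paper reads invertibility directly off the matrix equation
\[
\Bigl(\al\mtrx{I}-\mtrx{A}_0^{(i+1)}-\mtrx{A}_1\mtrx{G}_{i+2,i+1}(\al)-\sum_{l=i+1}^\infty\mtrx{W}_{l-(i+1)}(\al)\mtrx{G}_{l,i+1}(\al)\Bigr)\mtrx{G}_{i+1,i}(\al)=\mtrx{A}_{-1}^{(i+1)}:
\]
since $\mtrx{A}_{-1}^{(i+1)}$ has strictly positive diagonal entries and is hence invertible, the square matrix on the left possesses a right inverse, and therefore is itself invertible. Your alternative route—constructing an occupation-time matrix $\mtrx{H}^{(i+1)}(\al)$ and showing it is the explicit inverse—is also correct, but note that your $\mtrx{H}^{(i+1)}(\al)$ is exactly $\mtrx{N}_{i+1}(\al)$ as defined in Section~\ref{sec:horizontal_boundary}, and the identity you conjecture is precisely Proposition~\ref{prop:horizontal_boundary_N}. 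So you are effectively re-deriving that proposition here; the paper's argument is lighter-weight and avoids the duplication.
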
%

\begin{proof}%
This result can be proven using a one-step analysis. Fix an integer $i$, $0 \le i \le c - 2$, and observe that for $0 \le j \le c - 2$, $0 \le k \le c - 1$,
\begin{align}%
&\bigl( \mtrx{G}_{i + 1,i}(\al) \bigr)_{j,k} = \frac{(\mtrx{A}^{(i + 1)}_{-1})_{j,j}}{- (\mtrx{A}^{(i + 1)}_0)_{j,j} + \al} \ind{j = k} + \frac{(\mtrx{A}^{(i + 1)}_{0})_{j,j + 1}}{- (\mtrx{A}^{(i + 1)}_0)_{j,j} + \al} \bigl( \mtrx{G}_{i + 1,i}(\al) \bigr)_{j + 1,k} \notag \\
&\quad + \ind{j \neq 0} \frac{(\mtrx{A}^{(i + 1)}_{0})_{j,j - 1}}{- (\mtrx{A}^{(i + 1)}_0)_{j,j} + \al} \bigl( \mtrx{G}_{i + 1,i}(\al) \bigr)_{j - 1,k} + \frac{(\mtrx{A}_{1})_{j,j}}{- (\mtrx{A}^{(i + 1)}_0)_{j,j} + \al} \bigl( \mtrx{G}_{i + 2,i}(\al) \bigr)_{j,k}. \label{eqn:horizontal_boundary_G_level-dependent_one-step_not_top}
\end{align}%
Similarly, for $0 \le k \le c - 1$,
\begin{align}%
&\bigl( \mtrx{G}_{i + 1,i}(\al) \bigr)_{c - 1,k} = \frac{(\mtrx{A}^{(i + 1)}_{-1})_{c - 1,c - 1}}{- (\mtrx{A}^{(i + 1)}_0)_{c - 1,c - 1} + \al} \ind{c - 1 = k} + \frac{\Bigl( \sum_{l = i + 1}^\infty \mtrx{W}_{l - (i + 1)}(\al) \mtrx{G}_{l,i + 1}(\al) \Bigr)_{c - 1,k}}{- (\mtrx{A}^{(i + 1)}_0)_{c - 1,c - 1} + \al} \notag \\
&\quad + \frac{(\mtrx{A}^{(i + 1)}_{0})_{c - 1,c - 2}}{- (\mtrx{A}^{(i + 1)}_0)_{c - 1,c - 1} + \al} \bigl( \mtrx{G}_{i + 1,i}(\al) \bigr)_{c - 2,k} + \frac{(\mtrx{A}_{1})_{c - 1,c - 1}}{- (\mtrx{A}^{(i + 1)}_0)_{c - 1,c - 1} + \al} \bigl( \mtrx{G}_{i + 2,i}(\al) \bigr)_{c - 1,k}. \label{eqn:horizontal_boundary_G_level-dependent_one-step_top}
\end{align}%
Expressing \eqref{eqn:horizontal_boundary_G_level-dependent_one-step_not_top} and \eqref{eqn:horizontal_boundary_G_level-dependent_one-step_top} in matrix form yields the equality
\begin{equation}%
\mtrx{0} = \mtrx{A}^{(i + 1)}_{-1} + \bigl( \mtrx{A}^{(i + 1)}_0 - \al \mtrx{I} \bigr) \mtrx{G}_{i + 1,i}(\al) + \mtrx{A}_1 \mtrx{G}_{i + 2,i}(\al) + \sum_{l = i + 1}^\infty \mtrx{W}_{l - (i + 1)}(\al) \mtrx{G}_{l,i + 1}(\al).
\end{equation}%
Then, from the relation \eqref{eqn:horizontal_boundary_G_large_jumps_express_in_terms_of_small_jumps} we establish \eqref{eqn:horizontal_boundary_G_level-dependent_explicit_expression_in_theorem}. The matrix $\mtrx{A}^{(i + 1)}_{-1}$ is a diagonal matrix whose diagonal elements are all positive, so it has an inverse. This shows the inverse stated in \eqref{eqn:horizontal_boundary_G_level-dependent_explicit_expression_in_theorem} exists.
\end{proof}%

We now have an iterative procedure for computing all $\{ \mtrx{G}_{i + 1,i}(\al) \}_{0 \le i \le c - 1}$ matrices: first compute $\mtrx{G}(\al)$ from Proposition~\ref{prop:horizontal_boundary_G_iterative_scheme}, then use Proposition~\ref{prop:horizontal_boundary_G_level-dependent} to compute $\mtrx{G}_{c - 1,c - 2}(\al)$, then $\mtrx{G}_{c - 2,c - 3}(\al)$, and so on, stopping at $\mtrx{G}_{1,0}(\al)$.


\subsection{Computing the $\mtrx{N}_i(\al)$ matrices}%
\label{subsec:computing_N_matrices}%

The matrices $\{ \mtrx{N}_{i}(\al) \}_{1 \le i \le c}$ can be expressed in terms of $\{ \mtrx{G}_{i,j}(\al) \}_{i \ge j \ge 0}$.

\begin{proposition}\label{prop:horizontal_boundary_N}%
For each integer $i$ satisfying $1 \le i \le c$, we have
\begin{equation}%
\mtrx{N}_i(\al) = \Bigl( \al \mtrx{I} - \mtrx{A}^{(i)}_0 - \mtrx{A}_1 \mtrx{G}_{i + 1,i}(\al) - \sum_{l = i}^\infty \mtrx{W}_{l - i}(\al) \mtrx{G}_{l,i}(\al) \Bigr)^{-1}, \label{eqn:horizontal_boundary_N_statement}
\end{equation}%
where we use the convention $\mtrx{A}^{(c)}_0 = \mtrx{A}_0$.
\end{proposition}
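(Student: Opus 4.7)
The plan is a one-step analysis on $X$ starting from each $(i,k)$ with $0 \le k \le c - 1$, entirely parallel to the proofs of Propositions~\ref{prop:horizontal_boundary_G_fixed-point_equation} and~\ref{prop:horizontal_boundary_G_level-dependent}. Conditioning on the first jump time $T_1$ and applying the strong Markov property there, I would split $\bigl(\mtrx{N}_i(\al)\bigr)_{k,l}$ into the sojourn contribution $\ind{k = l}/(q((i,k)) + \al)$ plus a weighted sum over the possible landing sites $X(T_1)$. Using $-\bigl(\mtrx{A}^{(i)}_0\bigr)_{k,k} = q((i,k))$ together with the fact that the off-diagonal entries of $\mtrx{A}^{(i)}_0$ encode the intra-$\lvlb{i}$ phase transitions $(i,k) \to (i,k \pm 1)$, these routine pieces collect into the operator $\al\mtrx{I} - \mtrx{A}^{(i)}_0$ applied to $\mtrx{N}_i(\al)$; the jump of rate $\min(i,c-k)\mu_1$ to $(i-1,k) \in \lvlb{i-1}$ terminates the integral and contributes zero.

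The genuine work lies in the two transitions that leave $\lvlb{i}$ without immediately hitting $\lvlb{i-1}$. For the jump of rate $\la_1$ to $(i+1,k)$, the crucial observation is that from any state $(i+1,j)$ with $j \ge c$ the class-$1$ service rate $\max(\min(i+1,c-j),0)\mu_1$ vanishes, so starting from $(i+1,k)$ with $k \le c - 1$ the first coordinate cannot drop until the phase is pulled back below $c$; consequently the process must re-enter level $i$ through $\lvlb{i}$, i.e.\ $\tau_{\lvlb{i}} < \tau_{\lvlb{i-1}}$ almost surely. The strong Markov property applied at $\tau_{\lvlb{i}}$ then converts this contribution to $\mtrx{A}_1\mtrx{G}_{i+1,i}(\al)\mtrx{N}_i(\al)$. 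For the jump of rate $\la_2$ from $(i,c - 1)$ to $(i,c) \in \lvli{i}$, I would recycle the excursion decomposition used in the proof of Proposition~\ref{prop:horizontal_boundary_G_fixed-point_equation}: the same no-level-drop-while-phase-$\ge c$ argument shows the process must return to $\ph{c - 1}$ before reaching $\lvlb{i-1}$, and applying the strong Markov property at $\tau_{\ph{c - 1}}$ decomposes the contribution according to the landing site $(l,c - 1)$ with $l \ge i$. The corresponding joint transform is $\rtwo{\la_2,c\mu_2,\la_1}{l - i}{\al}$ by Lemma~\ref{lem:recursion_busy_period_Poisson_points_solution}, and combining it with the factorization from Proposition~\ref{prop:horizontal_boundary_G_strong_Markov_homogeneous} to continue from $(l,c - 1)$ down to $\lvlb{i}$ produces the term $\sum_{l \ge i}\mtrx{W}_{l - i}(\al)\mtrx{G}_{l,i}(\al)\mtrx{N}_i(\al)$; since the sole non-zero entry of each $\mtrx{W}_m(\al)$ sits at position $(c - 1,c - 1)$, this contribution can be appended uniformly in $k$ without disturbing rows $k \le c - 2$.

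Assembling the four contributions yields the matrix identity
\begin{equation*}
\Bigl(\al\mtrx{I} - \mtrx{A}^{(i)}_0 - \mtrx{A}_1\mtrx{G}_{i + 1,i}(\al) - \sum_{l = i}^{\infty}\mtrx{W}_{l - i}(\al)\mtrx{G}_{l,i}(\al)\Bigr)\mtrx{N}_i(\al) = \mtrx{I},
\end{equation*}
and since every entry of $\mtrx{N}_i(\al)$ is finite for $\RealPart{\al} > 0$, this equality between $c \times c$ matrices exhibits $\mtrx{N}_i(\al)$ as a right---and hence two-sided---inverse of the bracketed operator, giving \eqref{eqn:horizontal_boundary_N_statement}. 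I expect the main obstacle to be the $(i,c - 1) \to (i,c)$ excursion analysis: identifying the first-passage transform into $\ph{c - 1}$ with an $M/M/1$ busy period subject to independent Poisson clearings, and justifying the interchange of the strong Markov step with the resulting infinite series over $l$, is exactly the delicate step that powered Proposition~\ref{prop:horizontal_boundary_G_fixed-point_equation}, and I would lift that argument verbatim rather than re-derive it.
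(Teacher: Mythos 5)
Your proposal is correct and mirrors the paper's proof: a one-step analysis at $T_1$, with the $\la_1$ jump handled via the strong Markov property at $\tau_{\lvlb{i}}$ to produce $\mtrx{A}_1\mtrx{G}_{i+1,i}(\al)\mtrx{N}_i(\al)$ (using, as you note, that the first coordinate can only drop from a state with phase $\le c-1$, so the first entry into level $\le i$ necessarily lands in $\lvlb{i}$), and the $\la_2$ excursion from $(i,c-1)$ into $\ph{c-1}$ producing $\sum_{l\ge i}\mtrx{W}_{l-i}(\al)\mtrx{G}_{l,i}(\al)\mtrx{N}_i(\al)$ exactly as in the proof of Proposition~\ref{prop:horizontal_boundary_G_fixed-point_equation}. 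Your closing observation --- that the $c\times c$ identity $(\cdots)\mtrx{N}_i(\al)=\mtrx{I}$ exhibits $\mtrx{N}_i(\al)$ as a right, hence two-sided, inverse --- makes explicit a step the paper leaves tacit.
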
%

\begin{proof}%
Observe that for each $0 \le j,k \le c - 1$, we can use a one-step analysis and the strong Markov property at the first transition time $T_1$ to show that
\begin{align}%
\bigl( \mtrx{N}_i(\al) \bigr)_{j,k} &= \Efxd{ (i,j) }{ \int_{T_1}^{\tau_{\lvlb{i - 1}}} \euler^{-\al t} \ind{X(t) = (i,k)} \, \dinf t } + \Efxd{ (i,j) }{ \int_0^{T_1} \euler^{-\al t} \ind{X(t) = (i,k)} \, \dinf t } \notag \\
&= \ind{j \neq 0} \frac{(\mtrx{A}^{(i)}_0)_{j,j - 1}}{-(\mtrx{A}^{(i)}_0)_{j,j} + \al} \bigl( \mtrx{N}_i(\al) \bigr)_{j - 1,k} + \ind{j \neq c - 1} \frac{(\mtrx{A}^{(i)}_0)_{j,j + 1}}{-(\mtrx{A}^{(i)}_0)_{j,j} + \al} \bigl( \mtrx{N}_i(\al) \bigr)_{j + 1,k} \notag \\
&\quad + \ind{j = c - 1} \frac{1}{-(\mtrx{A}^{(i)}_0)_{c - 1,c - 1} + \al} \Bigl( \sum_{l = i}^\infty \mtrx{W}_{l - i}(\al) \mtrx{G}_{l,i}(\al) \mtrx{N}_i(\al) \Bigr)_{c - 1,k} \notag \\
&\quad + \frac{(\mtrx{A}_1)_{j,j}}{-(\mtrx{A}^{(i)}_0)_{j,j} + \al} \bigl( \mtrx{G}_{i + 1,i}(\al) \mtrx{N}_i(\al) \bigr)_{j,k} + \frac{\ind{j = k}}{-(\mtrx{A}^{(i)}_0)_{j,j} + \al}.
\end{align}%
Expressing these equations in matrix form yields
\begin{equation}%
\mtrx{0} = \mtrx{I} + \Bigl( \mtrx{A}^{(i)}_0 - \al \mtrx{I} + \mtrx{A}_1 \mtrx{G}_{i + 1,i}(\al) + \sum_{l = i}^\infty \mtrx{W}_{l - i}(\al) \mtrx{G}_{l,i}(\al) \Bigr) \mtrx{N}_i(\al),
\end{equation}%
proving \eqref{eqn:horizontal_boundary_N_statement}.
\end{proof}%


\subsection{Computing $\bm{\pi}_0(\al)$}%
\label{subsec:computing_pi_0}%

It remains to devise a method for computing the vector $\bm{\pi}_0(\al)$ so that the Ramaswami-like recursion from Theorem~\ref{thm:horizontal_boundary} can be properly initialized. The following is an adaptation of \cite[Section~3.3]{Joyner2016_Block-structured_Markov_processes}. We define the $c \times c$ matrix $\mtrx{N}_0(\al)$ whose elements are given by
\begin{equation}%
\bigl( \mtrx{N}_0(\al) \bigr)_{i,j} \defi \Efxd{ (0,i) }{\int_0^{\tau_\ori} \euler^{-\al t} \ind{ X(t) = (0,j) } \, \dinf t}, \quad 0 \le i,j \le c - 1.
\end{equation}%

In the derivation to follow, we require the notation $(\mtrx{A})^{[i,j]}$ which represents the matrix $\mtrx{A}$ with row $i$ and column $j$ removed (meaning it is a $(c - 1) \times (c - 1)$ matrix), whilst keeping the indexing of entries exactly as in $\mtrx{A}$. Similarly, $(\mtrx{A})^{[i,\cdot]}$ has row $i$ removed from $\mtrx{A}$ (meaning it is a $(c - 1) \times c$ matrix) and $(\mtrx{A})^{[\cdot,j]}$ has column $j$ removed from $\mtrx{A}$ (meaning it is a $c \times (c - 1)$ matrix).

\begin{proposition}\label{prop:origin_N}%
We have
\begin{align}%
\bigl( \mtrx{N}_0(\al) \bigr)^{[0,0]} &= \Bigl( \al \bigl( \mtrx{I} \bigr)^{[0,0]} \!- \!\bigl( \mtrx{A}_0^{(0)} \bigr)^{[0,0]} - \bigl( \mtrx{A}_1 \bigr)^{[0,\cdot]} \bigl( \mtrx{G}_{1,0}(\al) \bigr)^{[\cdot,0]} - \sum_{l = 0}^\infty \bigl( \mtrx{W}_l(\al) \bigr)^{[0,\cdot]} \bigl( \mtrx{G}_{l,0}(\al) \bigr)^{[\cdot,0]} \Bigr)^{-1}.
\end{align}%
\end{proposition}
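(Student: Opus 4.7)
The plan is to follow the one-step/strong-Markov template used to prove Proposition~\ref{prop:horizontal_boundary_N}, with the single twist that the target state $\ori$ of the hitting time lies \emph{inside} the lower level under study. This forces us to treat $\ori$ as an absorbing state for the purposes of the integral, which is exactly what the bracket notation $[0,0]$, $[0,\cdot]$, $[\cdot,0]$ accomplishes by stripping row~$0$ and/or column~$0$ from the ambient $c \times c$ matrices.

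A first observation I would record is that $(\mtrx{N}_0(\al))_{i,0} = 0$ for every $i \ge 1$, since $\tau_\ori$ is the first transition \emph{into} $\ori$ and hence $\ind{X(t) = (0,0)}$ vanishes on $[0,\tau_\ori)$ whenever $X(0) \neq \ori$. This justifies excising column~$0$; the proposition makes no claim about row~$0$, so throughout I would restrict the indices to $i, j \in \{1,\dots,c-1\}$. For such $(i,j)$ the integral over $[0, T_1]$ contributes $\ind{i = j}/(\al - (\mtrx{A}^{(0)}_0)_{i,i})$, while the remainder I would expand by the strong Markov property at $T_1$ over the admissible first jumps from $(0,i)$. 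The down-step $(0,i-1)$ contributes the usual $(\mtrx{A}^{(0)}_0)_{i,i-1}(\mtrx{N}_0(\al))_{i-1,j}$ when $i \ge 2$ and contributes nothing when $i = 1$, which is precisely what deleting column~$0$ of $\mtrx{A}^{(0)}_0$ encodes. The internal up-step $(0,i+1)$ gives the standard $(\mtrx{A}^{(0)}_0)_{i,i+1}(\mtrx{N}_0(\al))_{i+1,j}$ term when $i \le c-2$. The right-step $(1,i)$ returns to $\lvlb{0}$ with distribution governed by $\mtrx{G}_{1,0}(\al)$, but only returns to phases $k \ge 1$ contribute, since a return to phase~$0$ is an absorption. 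Finally, for $i = c-1$, the upward exit into $\lvli{0}$ is handled exactly as in the proofs of Theorem~\ref{thm:horizontal_boundary} and Proposition~\ref{prop:horizontal_boundary_G_fixed-point_equation}, by re-entering $\lvlb{0}$ at some $(l, c-1)$ with weight $(\mtrx{W}_l(\al))_{c-1,c-1}$ and then descending to $\lvlb{0}$ through $\mtrx{G}_{l,0}(\al)$, once again keeping only columns~$\ge 1$.

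Multiplying through by $\al - (\mtrx{A}^{(0)}_0)_{i,i}$, collecting the surviving contributions into matrix products, and consistently discarding row~$0$ of the left factors and column~$0$ of the right factors leads to the identity
\begin{equation*}
\Bigl( \al (\mtrx{I})^{[0,0]} - (\mtrx{A}^{(0)}_0)^{[0,0]} - (\mtrx{A}_1)^{[0,\cdot]} (\mtrx{G}_{1,0}(\al))^{[\cdot,0]} - \sum_{l=0}^\infty (\mtrx{W}_l(\al))^{[0,\cdot]} (\mtrx{G}_{l,0}(\al))^{[\cdot,0]} \Bigr) (\mtrx{N}_0(\al))^{[0,0]} = (\mtrx{I})^{[0,0]},
\end{equation*}
after which inverting the bracketed matrix yields the claim. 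The step I expect to require most care is justifying invertibility of the bracketed matrix; the route I would take mirrors the construction of $\mtrx{H}(\al)$ at the end of the proof of Proposition~\ref{prop:horizontal_boundary_G_fixed-point_equation}, observing that the identity above already exhibits $(\mtrx{N}_0(\al))^{[0,0]}$ as a right inverse and that the required finiteness of each entry of $(\mtrx{N}_0(\al))^{[0,0]}$ follows from integrability of $\euler^{-\al t}$ against a non-negative integrand for $\al \in \Complex_+$.
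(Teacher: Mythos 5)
Your proposal is correct and follows essentially the same route as the paper: a one-step analysis at $T_1$ plus the strong Markov property, restricting attention to the $(c-1)\times(c-1)$ block indexed by $1 \le i,j \le c-1$ because the $\la_1$ right-step decomposes through $\mtrx{G}_{1,0}(\al)$ restricted to re-entry phases $m \ge 1$ (re-entry at phase $0$ is absorption at $\ori$), and the $i = c-1$ upward exit decomposes through the $\sum_l \mtrx{W}_l(\al)\mtrx{G}_{l,0}(\al)$ term exactly as in the paper's \eqref{eqn:origin_N_proof_3}. Your two small additions — the observation that $(\mtrx{N}_0(\al))_{i,0} = 0$ for $i \ge 1$, and the remark that the derived identity $\mtrx{M}(\mtrx{N}_0(\al))^{[0,0]} = (\mtrx{I})^{[0,0]}$ together with finiteness of $(\mtrx{N}_0(\al))^{[0,0]}$ already yields invertibility (a left inverse of a finite square matrix is a two-sided inverse) — are sound and fill in details the paper leaves implicit.
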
%

\begin{proof}%
We can restrict our attention to elements $(i,j)$ with $1 \le i,j \le c - 1$. Similar to the proof of Proposition~\ref{prop:horizontal_boundary_N}, we perform a one-step analysis and the strong Markov property at the first transition time $T_1$ to obtain
\begin{align}%
\bigl( \mtrx{N}_0(\al) \bigr)_{i,j} &= \Efxd{ (0,i) }{\int_{T_1}^{\tau_\ori} \euler^{-\al t} \ind{ X(t) = (0,j) } \, \dinf t} + \Efxd{ (0,i) }{\int_0^{T_1} \euler^{-\al t} \ind{ X(t) = (0,j) } \, \dinf t} \notag \\
&= \ind{i = c - 1} \frac{\la_2}{-(\mtrx{A}^{(0)}_0)_{i,i} + \al} \Efxd{ (0,c) }{\int_0^{\tau_\ori} \!\!\!\euler^{-\al t} \ind{ X(t) = (0,j) } \, \dinf t} \notag \\
&\quad + \sum_{k = 0}^{c - 1} \frac{ (\mtrx{A}_1)_{i,k} }{-(\mtrx{A}^{(0)}_0)_{i,i} + \al} \Efxd{ (1,k) }{\int_0^{\tau_\ori} \euler^{-\al t} \ind{ X(t) = (0,j) } \, \dinf t} \notag \\
&\quad + \sum_{\substack{k = 1 \\ k \neq i}}^{c - 1} \frac{(\mtrx{A}^{(0)}_0)_{i,k}}{-(\mtrx{A}^{(0)}_0)_{i,i} + \al} \bigl( \mtrx{N}_0(\al) \bigr)_{k,j} + \frac{\ind{i = j}}{-(\mtrx{A}^{(0)}_0)_{i,i} + \al}. \label{eqn:origin_N_proof_1}
\end{align}%
We now simplify the two expectations appearing in \eqref{eqn:origin_N_proof_1}. We first make a slightly more general statement that will provide the second expectation, and will be used in the derivation of the first expectation. Summing over all ways at which the process enters $\lvlb{0}$ yields
\begin{align}%
&\Efxd{ (k,i) }{\int_0^{\tau_\ori} \euler^{-\al t} \ind{ X(t) = (0,j) } \, \dinf t} \notag \\
&= \sum_{m = 1}^{c - 1} \Efxd{ (k,i) }{\euler^{-\al \tau_{\lvlb{0}} } \ind{ X( \tau_{\lvlb{0}} ) = (0,m) } \int_{\tau_{\lvlb{0}}}^{\tau_\ori} \euler^{-\al ( t - \tau_{\lvlb{0}})} \ind{ X(t) = (0,j) } \, \dinf t} \notag \\
&= \sum_{m = 1}^{c - 1} \bigl( \mtrx{G}_{k,0}(\al) \bigr)_{i,m} \bigl( \mtrx{N}_{0}(\al) \bigr)_{m,j}. \label{eqn:origin_N_proof_2}
\end{align}%

Now, the first expectation in \eqref{eqn:origin_N_proof_1} follows by the above and by conditioning on the state at which the process enters phase $c - 1$ (see also the proof of Proposition~\ref{prop:horizontal_boundary_G_fixed-point_equation}):
\begin{align}%
&\Efxd{ (0,c) }{\int_0^{\tau_\ori} \euler^{-\al t} \ind{ X(t) = (0,j) } \, \dinf t} \notag \\
&= \sum_{l = 0}^\infty \Efxd{ (0,c) }{\euler^{-\al \tau_{\ph{c - 1}}} \ind{ X( \tau_{\ph{c - 1}} ) = (l,c - 1) } \int_{\tau_{\ph{c - 1}}}^{\tau_\ori} \euler^{-\al (t - \tau_{\ph{c - 1}})} \ind{ X(t) = (0,j) } \, \dinf t} \notag \\
&= \sum_{l = 0}^\infty \rtwo{\la_2,c\mu_2,\la_1}{l}{\al} \sum_{m = 1}^{c - 1} \bigl( \mtrx{G}_{l,0}(\al) \bigr)_{c - 1,m} \bigl( \mtrx{N}_0(\al) \bigr)_{m,j}. \label{eqn:origin_N_proof_3}
\end{align}%
Combining \eqref{eqn:origin_N_proof_2}-\eqref{eqn:origin_N_proof_3} with \eqref{eqn:origin_N_proof_1} and writing it in matrix form proves the claim.
\end{proof}%

We employ \eqref{eqn:taboo_transition_functions_double_summation} and Proposition~\ref{prop:horizontal_boundary_N} to determine the elements of the row vector $\bm{\pi}_0(\al)$. Set $A = \{ \ori \}$ in Theorem~\ref{thm:taboo_transition_functions}: then, for $1 \le j \le c - 1$ and $c \ge 2$,
\begin{align}%
\pi_{(0,j)}(\al) &= \sum_{z \in A} \pi_z(\al) \sum_{z' \in A} q(z,z') \Efxd{ z' }{ \int_0^{\tau_A} \euler^{-\al t} \ind{X(t) = (0,j)} \, \dinf t} \notag \\
&= \pi_\ori(\al) \Bigl( \la_1 \Efxd{ (1,0) }{ \int_0^{\tau_\ori} \euler^{-\al t} \ind{X(t) = (0,j)} \, \dinf t} \notag \\
&\hspace{5.5em} + \la_2 \Efxd{ (0,1) }{ \int_0^{\tau_\ori} \euler^{-\al t} \ind{X(t) = (0,j)} \, \dinf t} \Bigr) \notag \\
&= \pi_\ori(\al) \Bigl( \la_1 \sum_{l = 1}^{c - 1} \bigl( \mtrx{G}_{1,0}(\al) \bigr)_{0,l} \bigl( \mtrx{N}_0(\al) \bigr)_{l,j} + \la_2 \bigl( \mtrx{N}_0(\al) \bigr)_{1,j} \Bigr). \label{eqn:origin_Laplace_transforms_in_terms_of_origin}
\end{align}%
Hence, the transforms $\pi_{(0,j)}(\al), ~ 1 \le j \le c - 1$ can be expressed in terms of $\pi_\ori(\al)$. In Section~\ref{subsec:numerical_implementation} we describe a numerical procedure to determine $\pi_\ori(\al)$.


\begin{remark}[An alternative method for determining $\bm{\pi}_0(\al)$]%
The Kolmogorov forward equations can also be used to derive $\bm{\pi}_0(\al)$. The transition functions are known to satisfy these equations, since $\sup_{x \in \statespace} q(x) < \infty$. Taking the Laplace transform of the Kolmogorov forward equations for the states in $\lvlb{0}$ yields, after using \eqref{eqn:vertical_boundary_pi_(0,j)},
\begin{align}%
\bm{\pi}_{0}(\al) \bigl( \al \mtrx{I} - \mtrx{A}^{(0)}_0 \bigr) - \bm{\pi}_0(\al) \mtrx{W}_0(\al) - \bm{\pi}_{1}(\al) \mtrx{A}^{(1)}_{-1} = \eb{0},
\end{align}%
where $\eb{i}$ is a row vector with all elements equal to zero except for the $i$-th element which is unity. Finally, using Theorem~\ref{thm:horizontal_boundary} to express $\bm{\pi}_{1}(\al)$ in terms of $\bm{\pi}_{0}(\al)$ yields
\begin{align}%
&- \bm{\pi}_{0}(\al) \Bigl( - \al \mtrx{I} + \mtrx{A}^{(0)}_0 + \mtrx{W}_0(\al) \notag \\
&\hphantom{- \bm{\pi}_{0}(\al) \Bigl( ~ } + \bigl( \mtrx{A}_1 + \sum_{l = 1}^\infty \mtrx{W}_l(\al) \mtrx{G}_{l,1}(\al) \bigr) \mtrx{N}_1(\al) \mtrx{A}^{(1)}_{-1} \Bigr) = \eb{0}. \label{eqn:horizontal_boundary_Kolmogorov_forward_level_0}
\end{align}%

If one is instead interested in the stationary distribution and in particular the stationary probabilities of $\lvlb{0}$, i.e., $\lim_{\al \downarrow 0} \al \, \bm{\pi}_{0}(\al)$, \eqref{eqn:horizontal_boundary_Kolmogorov_forward_level_0} results in a homogeneous system of equations, but it is not clear that this system still has a unique solution. On the other hand, the approach outlined earlier for determining $\bm{\pi}_{0}(\al)$ can straightforwardly be employed to obtain $\lim_{\al \downarrow 0} \al \, \bm{\pi}_{0}(\al)$.
\end{remark}%


\subsection{Numerical implementation}%
\label{subsec:numerical_implementation}%

In order to compute the vectors $\{ \bm{\pi}_i(\al) \}_{i \ge 0}$, we first need to compute $\{ \mtrx{G}_{i + 1,i}(\al) \}_{0 \le i \le c - 1}$, $\{ \mtrx{N}_i(\al) \}_{1 \le i \le c}$, and $\bigl( \mtrx{N}_0(\al) \bigr)^{[0,0]}$.

The first step is to compute $\mtrx{G}(\al) = \mtrx{G}_{c,c - 1}(\al)$. Proposition~\ref{prop:horizontal_boundary_G_iterative_scheme} shows that this matrix can be approximated by using the recursion \eqref{eqn:horizontal_boundary_G_successive_substitutions}. Using this recursion requires us to truncate the infinite sum appearing within the recursion. One way of applying this truncation is as follows: given a fixed tolerance $\epsilon$, pick an integer $\kappa_\epsilon$ large enough so that
\begin{equation}%
\sum_{l = \kappa_\epsilon + 1}^\infty | \rtwo{\la_2,c\mu_2,\la_1}{l}{\al} | \le \epsilon / \la_2. \label{eqn:truncation_infinite_sum_condition_1}
\end{equation}%
Once $\kappa_\epsilon$ has been found, we can use the approximation
\begin{equation}%
\sum_{l = 1}^{\kappa_\epsilon} \mtrx{W}_l(\al) \mtrx{Z}(n,\al)^{l + 1} \approx \sum_{l = 1}^\infty \mtrx{W}_l(\al) \mtrx{Z}(n,\al)^{l + 1}, \label{eqn:truncation_infinite_sum_condition_2}
\end{equation}%
since the modulus of each element of the matrix on the left-hand side of \eqref{eqn:truncation_infinite_sum_condition_2} can be shown to be within $\epsilon$ of what is being approximated. Here we used that the matrices $\mtrx{W}_l(\al)$ only have one element and the absolute value of each element of $\mtrx{Z}(n,\al)$ (and $\mtrx{G}(\al)$) is less than or equal to 1. Hence, we propose using the recursion
\begin{equation}%
\mtrx{Z}(n + 1,\al) = \bigl( \al \mtrx{I} - \mtrx{A}_0 - \mtrx{W}_0(\al) \bigr)^{-1} \bigl( \mtrx{A}_{-1} + \mtrx{A}_1 \mtrx{Z}(n,\al)^2 + \sum_{l = 1}^{\kappa_\epsilon} \mtrx{W}_l(\al) \mtrx{Z}(n,\al)^{l + 1} \bigr) \label{eqn:horizontal_boundary_G_successive_substitutions_truncated}
\end{equation}%
to approximate $\mtrx{G}(\al)$. Notice that we can determine $\kappa_\epsilon$ satisfying \eqref{eqn:truncation_infinite_sum_condition_1} by writing the left-hand side of \eqref{eqn:truncation_infinite_sum_condition_1} as
\begin{align}%
&\sum_{l = \kappa_\epsilon + 1}^\infty | \rtwo{\la_2,c\mu_2,\la_1}{l}{\al} | = \sum_{l = 1}^\infty | \rtwo{\la_2,c\mu_2,\la_1}{l}{\al} | - \sum_{l = 1} ^{\kappa_\epsilon} | \rtwo{\la_2,c\mu_2,\la_1}{l}{\al} |. \label{eqn:truncation_infinite_sum_condition_3}
\end{align}%
An explicit expression for the infinite sum on the right-hand side of \eqref{eqn:truncation_infinite_sum_condition_3} can be derived with the help of Lemma~\ref{lem:recursion_busy_period_Poisson_points_solution} of Appendix~\ref{app:single-server_queues} and the generating function of the Catalan numbers; the finite sum can be computed numerically.

Once $\mtrx{G}(\al)$ has been found, we can use Proposition~\ref{prop:horizontal_boundary_G_level-dependent} to compute each $\mtrx{G}_{i + 1,i}(\al)$ matrix, for $0 \le i \le c - 2$. For this computation, we use the same truncation procedure as outline above.

The next step is to compute the matrices $\{ \mtrx{N}_i(\al) \}_{1 \le i \le c}$ and $\bigl( \mtrx{N}_0(\al) \bigr)^{[0,0]}$ using Propositions~\ref{prop:horizontal_boundary_N} and \ref{prop:origin_N}, respectively. For both computations we again use the above truncation procedure.

It remains to recursively determine $\{ \bm{\pi}_i(\al) \}_{i \ge 0}$ using Theorem~\ref{thm:horizontal_boundary}, where we again use the truncation procedure for the infinite sum. As we have seen in Section~\ref{subsec:computing_pi_0}, this recursion should be properly initialized by the value of $\pi_\ori(\al)$. The random-product representation in Theorem~\ref{thm:Laplace_transform_transition_functions} of Appendix~\ref{app:random-product_representation} shows that all Laplace transforms $\pi_x(\al)$ satisfy, for each $x \in \statespace$,
\begin{equation}%
\pi_x(\al) = \pi_\ori(\al) \psi_x(\al),
\end{equation}%
where $\pi_\ori(\al)$ is an unknown transform and $\psi_\ori(\al) = 1$. It is clear that $\psi_x(\al)$ can be computed using the exact same procedure as for $\pi_x(\al)$, and \eqref{eqn:origin_Laplace_transforms_in_terms_of_origin} shows that the transforms $\psi_\ori(\al),\psi_{(0,1)}(\al),\ldots,\psi_{(0,c - 1)}(\al)$ are computable expressions.

We can calculate $\pi_\ori(\al)$ from the normalization condition
\begin{equation}%
\sum_{x \in \statespace} \pi_x(\al) = \pi_\ori(\al) \sum_{x \in \statespace} \psi_x(\al) =  \frac{1}{\al},
\end{equation}%
which yields
\begin{equation}%
\pi_\ori(\al) = \frac{1}{\al \sum_{x \in \statespace} \psi_x(\al)}.
\end{equation}%
Since we cannot compute this infinite sum, we determine $\psi_{(i,j)}(\al)$ for all $(i,j)$ in a sufficiently large bounding box $\statespace_k \defi \{ (i,j) \in \statespace : 0 \le i \le k \}$ for some $k \ge 0$. Notice that \eqref{eqn:summation_Laplace_transforms_upper_level_i} allows $\statespace_k$ to be an infinitely large rectangle. The choice of $k$ in $\statespace_k$ clearly influences the quality of the approximation. A simple procedure to choose $k$ is the following. Define
\begin{equation}%
\Psi_k \defi \sum_{x \in \statespace_k} \psi_x(\al).
\end{equation}%
Pick $\epsilon$ small and positive and continue increasing $k$ until $\frac{|\Psi_{k + 1} - \Psi_k|}{|\Psi_k|} < \epsilon$. Then, set $\pi_\ori(\al) = 1/(\al \Psi_{k + 1})$ to normalize the Laplace transforms $\pi_x(\al) = \pi_\ori(\al) \psi_x(\al)$.

For $c = 1$ we can normalize the solution as outlined above, or we can explicitly determine the value of $\pi_\ori(\al)$; see the next section.


\section{The single-server case}%
\label{sec:single-server_case}%

We now turn our attention to the case where $c = 1$, i.e., the case where the system consists of a single server. In this case, the analysis of the Laplace transforms $\pi_{(i,0)}(\al), ~ i \ge 0$ simplifies considerably. The expressions for the Laplace transforms for the states in the interior and on the vertical boundary are identical to the multi-server case.

From \cite[Corollary~2.1]{Fralix2015_MC_Similar}---see also Theorem~\ref{thm:Laplace_transform_transition_functions} in Appendix~\ref{app:random-product_representation}---we have
\begin{equation}\label{eqn:single-server_origin}%
\pi_\ori(\al) = \frac{1}{(q(\ori) + \al)(1 - \E{ \ori }{ \euler^{-\al \tau_\ori}})}.
\end{equation}%
In light of \eqref{eqn:single-server_origin}, to evaluate $\pi_\ori(\al)$ the only thing that needs to be determined is the expectation $\E{\ori}{\euler^{-\al \tau_\ori}}$. This quantity is the Laplace-Stieltjes transform of the sum of two independent exponential random variables: one is the exponential random variable $E_{\la}$ having rate $\la$, the other is the busy period $B$ of an $M/G/1$ queue having arrival rate $\la$ and hyperexponential service times having cumulative distribution function $F(\cdot)$. More specifically,
\begin{equation}%
F(t) = \frac{\la_1}{\la}(1 - \euler^{-\mu_1 t}) + \frac{\la_2}{\la}(1 - \euler^{-\mu_2 t}), \quad t \ge 0.
\end{equation}%
The Laplace-Stieltjes transform $\varphi(\al)$ of $B$ is known to satisfy the Kendall functional equation
\begin{align}%
\varphi(\al) = \frac{\la_1}{\la} \frac{\mu_1}{\mu_1 + \al + \la(1 - \varphi(\al))} + \frac{\la_2}{\la} \frac{\mu_2}{\mu_2 + \al + \la(1 - \varphi(\al))}. \label{eqn:Kendall_functional_equation}
\end{align}%
Furthermore, $\varphi(\al)$ can be determined numerically through successive substitutions of \eqref{eqn:Kendall_functional_equation}, starting with $\varphi(\al) = 0$: see \cite[Section~1]{Abate1992_Transform_functional_equations} for details. Using independence of $E_\la$ and $B$,
\begin{equation}%
\E{\ori}{\euler^{-\al \tau_\ori}} = \E{\euler^{-\al (E_\la + B)}} = \frac{\la}{\la + \al} \varphi(\al),
\end{equation}%
meaning that (see also \cite[eq.~(36)]{Abate1994_M-G-1_transient_behavior}) for $c = 1$,
\begin{equation}%
\pi_\ori(\al) = \frac{1}{\la(1 - \varphi(\al)) + \al}.
\end{equation}%

We now turn our attention to the horizontal boundary. When $c = 1$, the matrices $\mtrx{G}(\al)$ and $\mtrx{N}(\al)$ become scalars, which we denote as $G(\al)$ and $N(\al)$, respectively. More precisely,
\begin{equation}%
G(\al) \defi \E{ (i + 1,0) }{ \euler^{-\al \tau_{\lvlb{i}}} }, \quad N(\al) \defi \Efxd{ (i,0) }{ \int_0^{\tau_{\lvlb{i - 1}}} \euler^{-\al t} \ind{ X(t) = (i,0) } \, \dinf t},
\end{equation}%
which are independent of $i \ge 1$.

\begin{proposition}%
The scalar $G(\al)$ is a solution to
\begin{align}\label{eqn:fixed_point_equation_G_for_iteration}%
(\la + \mu_1 + \al)G(\al) = \mu_1 + \la_1 G(\al)^2 + \la_2 G(\al) \phi_{\la_2,\mu_2}(\la_1(1 - G(\al)) + \al).
\end{align}%
\end{proposition}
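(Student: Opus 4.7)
The plan is to specialize the matrix fixed-point equation in Proposition~\ref{prop:horizontal_boundary_G_fixed-point_equation} to $c=1$ and then to recognize the resulting infinite series as a single Laplace-Stieltjes transform of an $M/M/1$ busy period. When $c = 1$ every matrix appearing in \eqref{eqn:horizontal_boundary_G_explicit_expression} collapses to a scalar. A direct inspection of the definitions in Section~\ref{sec:horizontal_boundary} yields $\mtrx{A}_1 = \la_1$, $\mtrx{A}_{-1} = \mu_1$ and $\mtrx{A}_0 = -\la_2 - \la_1 - \mu_1 = -(\la + \mu_1)$, so that $\al\mtrx{I} - \mtrx{A}_0 = \la + \mu_1 + \al$. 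The only nonzero entry of $\mtrx{W}_0(\al)$ becomes the scalar $\la_2 w_0(\al)$, and by \eqref{eqn:definition_busy_period_Poisson_points} with $i=0$ the event $\{\La_{\la_1}(B_{\la_2,\mu_2}) = 0\}$ has conditional probability $\euler^{-\la_1 B_{\la_2,\mu_2}}$ given the busy period, whence $w_0(\al) = \phi_{\la_2,\mu_2}(\la_1 + \al)$.

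Plugging these scalars into \eqref{eqn:horizontal_boundary_G_explicit_expression} and clearing the inverse factor gives
\begin{equation*}
(\la + \mu_1 + \al) G(\al) - \la_2 \phi_{\la_2,\mu_2}(\la_1 + \al) G(\al) = \mu_1 + \la_1 G(\al)^2 + \la_2 G(\al) \sum_{l=1}^\infty w_l(\al) G(\al)^l.
\end{equation*}
The main step is then to evaluate the infinite series in closed form. Using the definition of $w_l(\al)$, Fubini, and the probability-generating function of a Poisson random variable conditional on $B_{\la_2,\mu_2}$, I would compute
\begin{equation*}
\sum_{l=0}^\infty w_l(\al) G(\al)^l = \E{1}{\euler^{-\al B_{\la_2,\mu_2}} G(\al)^{\La_{\la_1}(B_{\la_2,\mu_2})}} = \E{1}{\euler^{-(\al + \la_1(1 - G(\al))) B_{\la_2,\mu_2}}} = \phi_{\la_2,\mu_2}\bigl(\la_1(1 - G(\al)) + \al\bigr).
\end{equation*}
Isolating the $l \ge 1$ part by subtracting $w_0(\al) = \phi_{\la_2,\mu_2}(\la_1 + \al)$ and substituting back into the displayed equation cancels the $\la_2 G(\al) \phi_{\la_2,\mu_2}(\la_1 + \al)$ contribution with the one already present on the left, yielding \eqref{eqn:fixed_point_equation_G_for_iteration}.

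The only technical point that requires care is justifying the termwise summation: absolute convergence of $\sum_l w_l(\al) G(\al)^l$ holds because $|G(\al)| \le 1$ for $\al \in \Complex_+$ (since $G(\al)$ is the LST of a substochastic quantity) and $\sum_l |w_l(\al)| \le \phi_{\la_2,\mu_2}(\RealPart{\al})$ is finite. Apart from this mild bookkeeping, the derivation is a straightforward scalar reduction of the matrix equation already proven, so no substantive new obstacle arises; the essential content is the generating-function identification above.
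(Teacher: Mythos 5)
Your proposal is correct and follows essentially the same route as the paper: specialize Proposition~\ref{prop:horizontal_boundary_G_fixed-point_equation} to $c=1$ and identify $\sum_{l\ge 0} \rtwo{\la_2,\mu_2,\la_1}{l}{\al}\, G(\al)^l$ as $\phi_{\la_2,\mu_2}(\la_1(1-G(\al))+\al)$. The only cosmetic difference is that you re-derive this generating-function identity by conditioning on the busy period, whereas the paper simply invokes Lemma~\ref{lem:busy_periods_Poisson_points_generating_function} (whose proof is precisely the argument you wrote out), and the paper absorbs the $\mtrx{W}_0(\al)G(\al)$ term into the sum from the start rather than cancelling it at the end.
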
%

\begin{proof}%
From Proposition~\ref{prop:horizontal_boundary_G_fixed-point_equation}, we easily find that when $c = 1$,
\begin{equation}%
(\la + \mu_1 + \al) G(\al) = \mu_1 + \la_1 G(\al)^2 + \la_2 \sum_{l = 0}^\infty \rtwo{\la_2,\mu_2,\la_1}{l}{\al} G(\al)^{l + 1}. \label{eqn:horizontal_boundary_G_single-server_proof}
\end{equation}%
The infinite series appearing in \eqref{eqn:horizontal_boundary_G_single-server_proof} can be simplified using Lemma~\ref{lem:busy_periods_Poisson_points_generating_function} of Appendix~\ref{app:single-server_queues}; doing so yields \eqref{eqn:fixed_point_equation_G_for_iteration}.
\end{proof}%

Even though we cannot use \eqref{eqn:fixed_point_equation_G_for_iteration} to write down an explicit expression for $G(\al)$, we can still use it to devise an iterative scheme for computing $G(\al)$. The next result shows that $N(\al)$ can be expressed in terms of $G(\al)$.

\begin{proposition}%
We have
\begin{equation}\label{eqn:horizontal_boundary_N_single-server}%
N(\al) = \frac{1}{\al + \la + \mu_1 - \la_1 G(\al) - \la_2 \phi_{\la_2,\mu_2}(\la_1(1 - G(\al)) + \al)}.
\end{equation}%
\end{proposition}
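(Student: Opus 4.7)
The plan is to specialize Proposition~\ref{prop:horizontal_boundary_N} to the scalar case $c=1$ and then simplify the resulting infinite series via a probabilistic identity. When $c=1$ all of the matrices $\mtrx{A}_0^{(i)}, \mtrx{A}_1, \mtrx{A}_{-1}$ and $\mtrx{W}_m(\al)$ reduce to scalars: $\mtrx{A}_1 = \la_1$, $\mtrx{A}_{-1} = \mu_1$, and the only off-diagonal entry of the tridiagonal block in the definition of $\mtrx{A}_0$ is $-\la_2$, so after subtracting $\mtrx{A}_1$ and $\mtrx{A}_{-1}$ we obtain $\mtrx{A}_0 = -(\la + \mu_1)$. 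For $i \ge 1$ we have $\mtrx{A}_{-1}^{(i)} = \min(i,1)\mu_1 = \mu_1 = \mtrx{A}_{-1}$, and hence $\mtrx{A}_0^{(i)} = \mtrx{A}_0$; this is what makes $N(\al)$ independent of $i$ for $i \ge 1$.

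Next, I would invoke Proposition~\ref{prop:horizontal_boundary_G_strong_Markov_homogeneous} to conclude that, in the scalar case, $\mtrx{G}_{l,i}(\al) = G(\al)^{l-i}$ for all $l \ge i \ge 1$, and that the $\mtrx{W}_m(\al)$ scalar is simply $\la_2 \, \rtwo{\la_2,\mu_2,\la_1}{m}{\al}$. Substituting these into the formula of Proposition~\ref{prop:horizontal_boundary_N} gives
\begin{equation*}
N(\al)^{-1} = \al + \la + \mu_1 - \la_1 G(\al) - \la_2 \sum_{m = 0}^\infty \rtwo{\la_2,\mu_2,\la_1}{m}{\al} G(\al)^m.
\end{equation*}

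The remaining step is to recognize the infinite series as a Laplace--Stieltjes transform of a busy period. Using the definition \eqref{eqn:definition_busy_period_Poisson_points}, one has
\begin{equation*}
\sum_{m=0}^\infty \rtwo{\la_2,\mu_2,\la_1}{m}{\al} G(\al)^m = \E{1}{\euler^{-\al B_{\la_2,\mu_2}} G(\al)^{\La_{\la_1}(B_{\la_2,\mu_2})}},
\end{equation*}
and conditioning on $B_{\la_2,\mu_2}$ and applying the probability generating function of a Poisson random variable produces $\phi_{\la_2,\mu_2}(\al + \la_1(1 - G(\al)))$. This identity is precisely the content of Lemma~\ref{lem:busy_periods_Poisson_points_generating_function} in Appendix~\ref{app:single-server_queues}, which was already used in the proof of the preceding proposition. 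Plugging this back into the displayed expression for $N(\al)^{-1}$ yields \eqref{eqn:horizontal_boundary_N_single-server}.

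The main obstacle is really only bookkeeping: one has to verify that the level-dependent matrix $\mtrx{A}_0^{(i)}$ coincides with $\mtrx{A}_0$ in the scalar setting (so that $N(\al)$ is genuinely level-independent), and to correctly collapse the double-indexed $\mtrx{G}_{l,i}(\al)$ into powers of $G(\al)$; no new ideas beyond those in Propositions~\ref{prop:horizontal_boundary_G_strong_Markov_homogeneous} and \ref{prop:horizontal_boundary_N}, together with Lemma~\ref{lem:busy_periods_Poisson_points_generating_function}, are needed.
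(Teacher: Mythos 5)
Your proof is correct and takes essentially the same route as the paper: specialize Proposition~\ref{prop:horizontal_boundary_N} to $c = 1$, identify the scalar reductions (including $\mtrx{G}_{l,i}(\al) = G(\al)^{l-i}$ via Proposition~\ref{prop:horizontal_boundary_G_strong_Markov_homogeneous}), and then collapse the resulting series with Lemma~\ref{lem:busy_periods_Poisson_points_generating_function}. The paper's proof is just a terser version of exactly this, so the only difference is that you spell out the bookkeeping the paper leaves implicit.
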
%

\begin{proof}%
Using Proposition~\ref{prop:horizontal_boundary_N}, we observe that when $c = 1$,
\begin{align}\label{eqn:horizontal_boundary_N_single-server_proof}%
N(\al) = \Bigl( \al + \la + \mu_1 - \la_1 G(\al) - \la_2 \sum_{l = 0}^\infty \rtwo{\la_2,\mu_2,\la_1}{l}{\al} G(\al)^l \Bigr)^{-1}.
\end{align}%
The proof is completed by applying Lemma~\ref{lem:busy_periods_Poisson_points_generating_function} of Appendix~\ref{app:single-server_queues} to \eqref{eqn:horizontal_boundary_N_single-server_proof}.
\end{proof}%

We now focus on the recursion for the horizontal boundary. When $c = 1$, Theorem~\ref{thm:horizontal_boundary} reduces to
\begin{equation}%
\pi_{(i + 1,0)}(\al) = \la_1 \pi_{(i,0)}(\al) N(\al) + \la_2 \sum_{k = 0}^i \pi_{(k,0)}(\al) \sum_{l = i + 1}^\infty \rtwo{\la_2,\mu_2,\la_1}{l - k}{\al} G(\al)^{l - (i + 1)} N(\al).
\end{equation}%
For the inner-most sum over $l$ we have
\begin{equation}%
\sum_{l = i + 1}^\infty \rtwo{\la_2,\mu_2,\la_1}{l - k}{\al} G(\al)^{l - (i + 1)} = G(\al)^{k - (i + 1)} \sum_{m = i + 1 - k}^\infty \rtwo{\la_2,\mu_2,\la_1}{m}{\al} G(\al)^m.
\end{equation}%
Clearly, $i + 1 - k \ge 1$. So let us try to evaluate the tail of the generating function of $\{ \rtwo{\la_2,\mu_2,\la_1}{m}{\al} \}_{m \ge 0}$ evaluated at the point $G(\al)$, i.e.,
\begin{equation}%
\sum_{m = i + 1 - k}^\infty \rtwo{\la_2,\mu_2,\la_1}{m}{\al} \, G(\al)^m
= \phi_{\la_2, \mu_2}(\la_1(1 - G(\al)) + \al) - \sum_{m = 0}^{i - k} \rtwo{\la_2,\mu_2,\la_1}{m}{\al} \, G(\al)^m,
\end{equation}%
which follows from an application of Lemma~\ref{lem:busy_periods_Poisson_points_generating_function} of Appendix~\ref{app:single-server_queues}. The remaining finite summation is easy to compute since each $\rtwo{\la_2,\mu_2,\la_1}{m}{\al}$ term, by Lemma~\ref{lem:recursion_busy_period_Poisson_points_solution} of Appendix~\ref{app:single-server_queues}, can be stated in terms of $b_{K}(\cdot)$ functions, and these satisfy the recursion found in Lemma~\ref{lem:binomial_identity_mixing_moments} of Appendix~\ref{app:combinatorial_identities}. These observations allow us to state the following theorem, which yields a practical method for recursively computing Laplace transforms of the form $\pi_{(i,0)}(\al)$.

\begin{theorem}[Horizontal boundary, single server]\label{thm:single-server_horizontal_boundary}%
When $c = 1$, the Laplace transforms of the transition functions on the horizontal boundary satisfy the following recursion\textup{:} for $i \ge 0$,
\begin{align}%
&\pi_{(i + 1,0)}(\al) = \la_1 \pi_{(i,0)}(\al) N(\al) \notag \\
&\quad + \la_2 \sum_{k = 0}^i \pi_{(k,0)}(\al) G(\al)^{k - (i + 1)} \Bigl( \phi_{\la_2,\mu_2}(\la_1(1 - G(\al)) + \al) - \sum_{l = 0}^{i - k} \rtwo{\la_2,\mu_2,\la_1}{l}{\al} G(\al)^l \Bigr) N(\al).
\end{align}%
\end{theorem}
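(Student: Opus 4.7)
My plan is to derive the recursion as a scalar specialization of the Ramaswami-like recursion of Theorem~\ref{thm:horizontal_boundary}, followed by an explicit evaluation of the resulting inner series via the generating-function identity for the $w$-coefficients. First I would take equation~\eqref{eqn:first_Ramaswami_recursion} and set $c = 1$. Every $c \times c$ matrix collapses to a scalar: $\bm{\pi}_i(\al)$ becomes $\pi_{(i,0)}(\al)$; $\mtrx{A}_1$ becomes $\la_1$; $\mtrx{N}_{i+1}(\al)$ becomes $N(\al)$ for every $i \ge 0$, by the homogeneity $\mtrx{N}_i(\al) = \mtrx{N}(\al)$ valid for $i \ge c = 1$; and the only nonzero entry of $\mtrx{W}_m(\al)$ contributes $\la_2 \rtwo{\la_2,\mu_2,\la_1}{m}{\al}$. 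Proposition~\ref{prop:horizontal_boundary_G_strong_Markov_homogeneous}---through the identity $\mtrx{G}_{c+k,c-1+k}(\al) = \mtrx{G}(\al)$ combined with the chain relation \eqref{eqn:horizontal_boundary_G_large_jumps_express_in_terms_of_small_jumps}---gives $\mtrx{G}_{l,i+1}(\al) = G(\al)^{l-(i+1)}$ for every $l \ge i+1$. Substituting these scalar collapses into \eqref{eqn:first_Ramaswami_recursion} produces
\[
\pi_{(i+1,0)}(\al) = \la_1 \pi_{(i,0)}(\al) N(\al) + \la_2 N(\al) \sum_{k=0}^{i} \pi_{(k,0)}(\al) \sum_{l=i+1}^{\infty} \rtwo{\la_2,\mu_2,\la_1}{l-k}{\al} G(\al)^{l-(i+1)}.
\]

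The second step is to evaluate the inner series in closed form. Substituting $m = l - k$ gives
\[
\sum_{l=i+1}^{\infty} \rtwo{\la_2,\mu_2,\la_1}{l-k}{\al} G(\al)^{l-(i+1)} = G(\al)^{k-(i+1)} \sum_{m=i+1-k}^{\infty} \rtwo{\la_2,\mu_2,\la_1}{m}{\al} G(\al)^m.
\]
Since $0 \le k \le i$ forces $i + 1 - k \ge 1$, the tail equals the full generating function evaluated at $G(\al)$ minus the finite initial segment $\sum_{l=0}^{i-k} \rtwo{\la_2,\mu_2,\la_1}{l}{\al} G(\al)^l$. Invoking Lemma~\ref{lem:busy_periods_Poisson_points_generating_function} of Appendix~\ref{app:single-server_queues}, which identifies the full generating function with $\phi_{\la_2,\mu_2}(\la_1(1 - G(\al)) + \al)$, produces the bracketed expression in the theorem statement, and assembling everything yields the claimed recursion.

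The substantive ingredients---the CAP-based Ramaswami decomposition, the structural collapse of the $\mtrx{G}$-matrices, and the closed-form generating function of the $w$-coefficients---have all been established elsewhere, so the only real obstacle is careful index bookkeeping. One must ensure that the truncation of the tail runs up to $m = i - k$ rather than $i + 1 - k$, and that the prefactor $G(\al)^{k-(i+1)}$ is placed correctly; an off-by-one error in either place would shift the final formula. These caveats aside, the theorem reads essentially as a corollary of Theorem~\ref{thm:horizontal_boundary}.
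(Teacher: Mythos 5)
Your proposal is correct and follows the paper's derivation essentially verbatim: specialize Theorem~\ref{thm:horizontal_boundary} to $c = 1$ so that $\bm{\pi}_i$, $\mtrx{A}_1$, $\mtrx{N}_{i+1}$, $\mtrx{W}_m$, and $\mtrx{G}_{l,i+1}$ collapse to $\pi_{(i,0)}(\al)$, $\la_1$, $N(\al)$, $\la_2 \rtwo{\la_2,\mu_2,\la_1}{m}{\al}$, and $G(\al)^{l-(i+1)}$ respectively; then shift indices and invoke Lemma~\ref{lem:busy_periods_Poisson_points_generating_function} to rewrite the tail series as the full generating function $\phi_{\la_2,\mu_2}(\la_1(1-G(\al))+\al)$ minus the finite initial segment $\sum_{l=0}^{i-k} \rtwo{\la_2,\mu_2,\la_1}{l}{\al} G(\al)^l$. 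The index bookkeeping (tail starting at $m = i+1-k \ge 1$, hence the truncation at $i-k$) is handled correctly, matching the paper exactly.
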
%


\section{Conclusion}%
\label{sec:conclusion}%

In this paper we analyzed an $M/M/c$ priority system with two customer classes, class-dependent service rates and a preemptive resume priority rule. This queueing system can be modeled as a two-dimensional Markov process for which we analyzed the time-dependent behavior. More precisely, we obtained expressions for the Laplace transforms of the transition functions under the condition that the system is initially empty.

Using a slight modification of the CAP method, we showed that the Laplace transforms for the states with at least $c$ high-priority customers can be expressed in terms of a finite sum of the Laplace transforms for the states with exactly $c - 1$ high-priority customers. This expression contained coefficients that satisfy a recursion. We solved this recursion to obtain an explicit expression for each coefficient. In doing so, each Laplace transform for the states on the vertical boundary and in the interior can easily be calculated from the values of the Laplace transforms for the states in the horizontal boundary.

Next, we developed a Ramaswami-like recursion for the Laplace transforms for the states in the horizontal boundary. The recursion required the collections of matrices $\{ \mtrx{G}_{i + 1,i} \}_{0 \le i \le c - 1}$ and $\{ \mtrx{N}_i(\al) \}_{1 \le i \le c}$ for which we showed that they can be determined iteratively. We demonstrated two ways in which the initial value of the recursion, i.e., the vector $\bm{\pi}_0(\al)$, can be calculated. Finally, we discussed the numerical implementation of our approach for the horizontal boundary.

In the single-server case the expressions for the Laplace transforms for the states on the vertical boundary and in the interior were identical to the multi-server case. The expressions for the horizontal boundary, however, simplified considerably. Specifically, the initial value $\pi_{(0,0)}$ of the recursion could be determined by comparing the queueing system to an $M/G/1$ queue with hyperexponentially distributed service times. Moreover, the calculation of $G(\al)$ and $N(\al)$, which are now scalars, simplified greatly.

We now comment on how our expressions for the Laplace transforms of the transitions functions can be used to determine the stationary distribution. It is clear from the transition rate diagram in Figure~\ref{fig:transition_rate_diagram} that  the Markov process $X$ is irreducible. Moreover, it is well-known that $X$ is positive-recurrent if and only if $\rho < 1$. In that case, $X$ has a unique stationary distribution $\vc{p} \defi [ p_{x} ]_{x \in \statespace}$. To compute each $p_x$ term from $\pi_x(\al)$, simply note that
\begin{equation}%
p_{x} = \lim_{\al \downarrow 0} \al \, \pi_{x}(\al).
\end{equation}%
Using this observation, we see that the procedure for finding $\vc{p}$ is highly analogous to the one we presented for finding the Laplace transforms of the transition functions.


\appendix%


\section{Combinatorial identities}%
\label{app:combinatorial_identities}%

In this appendix we collect some combinatorial identities that are used throughout the paper. These lemmas are likely known, but we prove them here to make the paper self-contained.

\begin{lemma}\label{lem:interior_infinite_summation_Upsilon}%
For $j \ge 1, ~ l \ge 0$ and $\Upsilon(j,k)$ defined in \eqref{eqn:definition_Upsilon}, we have
\begin{align}%
&\sum_{k = 1}^\infty \Upsilon(j,k) \binom{k - 1 + l}{l} r_2^k \notag \\
&= \la_1 \Omega_2 \binom{j - 1 + l + 1}{l + 1} r_2^j 
+ \frac{\la_1 \Omega_2}{1 - r_2\phi_2} r_2 \phi_2 \sum_{m = 1}^l \frac{1}{(1 - r_2 \phi_2)^{l - m}} \binom{j - 1 + m}{m} r_2^j.
\end{align}%
\end{lemma}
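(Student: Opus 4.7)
The plan is to split the sum according to the piecewise definition of $\Upsilon(j,k)$, reduce each piece to a closed form via standard generating-function identities, and then identify the result with the right-hand side by matching coefficients in a suitable basis. Writing $x \defi r_2 \phi_2$, the starting observation I would use is the decomposition
\[
\frac{\Upsilon(j,k)}{\la_1 \Omega_2} = \begin{cases} r_2^{j-k} - r_2^j \phi_2^k, & 1 \le k \le j-1, \\ \phi_2^{k-j} - r_2^j \phi_2^k, & k \ge j, \end{cases}
\]
which isolates a common $-r_2^j \phi_2^k$ contribution that is valid for every $k \ge 1$. By the elementary identity $\sum_{k \ge 1} x^k \binom{k-1+l}{l} = x/(1-x)^{l+1}$, this common piece sums to $-\la_1 \Omega_2 r_2^j \, x / (1-x)^{l+1}$.

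Next I would treat the two branches separately. The $k \le j-1$ branch collapses by the hockey-stick identity to $\la_1 \Omega_2 r_2^j \binom{j-1+l}{l+1}$. For the $k \ge j$ branch, the substitution $m = k-j$ yields $\la_1 \Omega_2 r_2^j \sum_{m \ge 0} x^m \binom{m+j-1+l}{l}$; I would then apply Vandermonde's identity
\[
\binom{m+j-1+l}{l} = \sum_{i=0}^l \binom{j-1+l}{l-i} \binom{m}{i}
\]
together with $\sum_{m \ge 0} \binom{m}{i} x^m = x^i/(1-x)^{i+1}$ to rewrite this branch as $\la_1 \Omega_2 r_2^j \sum_{i=0}^l \binom{j-1+l}{l-i} x^i / (1-x)^{i+1}$. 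Collecting the three contributions, the left-hand side of the lemma equals $\la_1 \Omega_2 r_2^j$ multiplied by
\[
\binom{j-1+l}{l+1} + \sum_{i=0}^l \binom{j-1+l}{l-i} \frac{x^i}{(1-x)^{i+1}} - \frac{x}{(1-x)^{l+1}}.
\]

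The remaining task, which I expect to be the main obstacle, is to identify this quantity with $\binom{j+l}{l+1} + \frac{x}{1-x} \sum_{m=1}^l \binom{j-1+m}{m}/(1-x)^{l-m}$. The cleanest route I foresee is to expand both sides in the basis $\{1, (1-x)^{-1}, \dots, (1-x)^{-(l+1)}\}$ via $x = 1 - (1-x)$ and compare coefficients of each $(1-x)^{-p}$. The constant term matches by Pascal's rule $\binom{j+l}{l+1} = \binom{j-1+l}{l+1} + \binom{j-1+l}{l}$; a direct computation gives the coefficients $0$ at $p = l+1$ and $j$ at $p = l$ on both sides; and for $1 \le p \le l-1$ the required equality reduces, after reindexing, to
\[
\sum_{m=0}^{l-p+1} (-1)^m \binom{j-1+l}{l-p+1-m} \binom{p-1+m}{m} = \binom{j-1+l-p}{l-p+1},
\]
which follows from Vandermonde's identity after applying the upper-negation identity $\binom{p-1+m}{m} = (-1)^m \binom{-p}{m}$.
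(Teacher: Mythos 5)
Your argument is correct, and I verified the key steps: the decomposition $\Upsilon(j,k)/(\lambda_1\Omega_2) = r_2^{j-k} - r_2^j\phi_2^k$ (for $k\le j-1$) and $\phi_2^{k-j} - r_2^j\phi_2^k$ (for $k\ge j$), the negative-binomial series giving the $-r_2^j x/(1-x)^{l+1}$ term, the hockey-stick collapse of the lower branch, the Vandermonde expansion of $\binom{m+j-1+l}{l}$ for the upper branch, and the final coefficient comparison in the basis $\{(1-x)^{-p}\}_{0\le p\le l+1}$. The identity you reduce to, $\sum_{m=0}^{l-p+1}(-1)^m\binom{j-1+l}{l-p+1-m}\binom{p-1+m}{m}=\binom{j-1+l-p}{l-p+1}$, does follow from upper negation plus Vandermonde exactly as you claim; I also spot-checked the $p=l$ and $p=l+1$ coefficients and the $l=0,1,2$ cases. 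The comparison to the paper is a bit unusual: the paper gives no proof at all, stating only that the result is ``nearly identical to [Doroudi et al., Lemma~1]'' (which is the stationary analogue) and omitting the argument. So yours is not so much a different route as the only self-contained route on offer, and it is of genuine value to have it written out. One small thing worth tightening: the phrase ``the constant term matches by Pascal's rule'' is slightly loose, because the constant term of either side is not $\binom{j+l}{l+1}$; rather, both sides have constant term $\binom{j-1+l}{l+1}$ (for $l\ge 1$; with an extra $+1$ on both sides when $l=0$), and Pascal enters in showing $\binom{j+l}{l+1}-\binom{j-1+l}{l}=\binom{j-1+l}{l+1}$ on the right-hand side. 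Rephrasing that sentence would remove the only ambiguity I found.
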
%

\begin{proof}%
The result is nearly identical to \cite[Lemma~1]{Doroudi2015_CAP}, so we omit its proof.
\end{proof}%

\begin{lemma}\label{lem:binomial_identity_using_Vandermonde_identity}%
We have the identity
\begin{equation}%
\sum_{k = 0}^{K} \frac{(K - k + 2l)!}{(K - k)!} \frac{(k + 2m)!}{k!} = (2l)!(2m)! \binom{K + 2l + 2m + 1}{2l + 2m + 1}. \label{eqn:binomial_identity_using_Vandermonde_identity}
\end{equation}%
\end{lemma}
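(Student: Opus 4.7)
The plan is to reduce the identity to a standard Vandermonde-Chu convolution after pulling out the constants $(2l)!$ and $(2m)!$. Writing
\[
\frac{(K-k+2l)!}{(K-k)!} = (2l)!\binom{K-k+2l}{2l}, \qquad \frac{(k+2m)!}{k!} = (2m)!\binom{k+2m}{2m},
\]
the claimed identity \eqref{eqn:binomial_identity_using_Vandermonde_identity} becomes, after dividing by $(2l)!(2m)!$,
\[
\sum_{k=0}^{K} \binom{K-k+2l}{2l}\binom{k+2m}{2m} = \binom{K+2l+2m+1}{2l+2m+1}.
\]
So the task reduces to verifying this general convolution identity with $a \defi 2l$, $b \defi 2m$.

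The cleanest route is via generating functions. Recall that for any integer $a \ge 0$,
\[
\sum_{n\ge 0}\binom{n+a}{a}x^n = \frac{1}{(1-x)^{a+1}}, \qquad |x|<1.
\]
The left-hand side of the reduced identity is precisely the coefficient of $x^K$ in the Cauchy product
\[
\left(\sum_{n\ge 0}\binom{n+a}{a}x^n\right)\left(\sum_{n\ge 0}\binom{n+b}{b}x^n\right) = \frac{1}{(1-x)^{a+1}}\cdot\frac{1}{(1-x)^{b+1}} = \frac{1}{(1-x)^{a+b+2}},
\]
and the coefficient of $x^K$ in the latter is $\binom{K+a+b+1}{a+b+1}$. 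Equating coefficients of $x^K$ yields the reduced identity, and hence \eqref{eqn:binomial_identity_using_Vandermonde_identity}.

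As an alternative more in line with the lemma's name, one can apply Vandermonde's identity directly via upper negation: since $\binom{k+a}{k}=(-1)^k\binom{-a-1}{k}$ and $\binom{K-k+b}{K-k}=(-1)^{K-k}\binom{-b-1}{K-k}$, Vandermonde gives
\[
\sum_{k=0}^{K}\binom{-a-1}{k}\binom{-b-1}{K-k} = \binom{-a-b-2}{K},
\]
and multiplying through by $(-1)^K$ and applying upper negation once more yields $\binom{K+a+b+1}{K}=\binom{K+a+b+1}{a+b+1}$. There is no real obstacle here: the only care needed is bookkeeping of the binomial-to-factorial conversion in the first step and the sign tracking in the Vandermonde version; the generating-function proof avoids even that.
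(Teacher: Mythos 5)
Your reduction to $\sum_{k=0}^{K}\binom{K-k+2l}{2l}\binom{k+2m}{2m}=\binom{K+2l+2m+1}{2l+2m+1}$ by factoring out $(2l)!\,(2m)!$ is exactly the paper's first step; the paper then dispatches the resulting convolution by citing Feller (Chapter~2, eq.~(12.16) of \cite{Feller1968_Volume_I}), whereas you prove that same convolution self-containedly via the generating function $\sum_{n\ge 0}\binom{n+a}{a}x^n=(1-x)^{-(a+1)}$. Since the generating-function computation is precisely the standard proof of the Feller/Vandermonde identity being cited, your argument is correct and essentially the same as the paper's, with the minor advantage of not requiring an external reference; the upper-negation alternative you sketch is also valid.
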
%

\begin{proof}%
Rewrite the fractions as two binomial coefficients
\begin{equation}%
\eqref{eqn:binomial_identity_using_Vandermonde_identity} = (2l)!(2m)! \sum_{k = 0}^K \binom{K - k + 2l}{K - k} \binom{k + 2m}{k}. \label{eqn:proof_Vandermonde_identity}
\end{equation}%
The rest of the proof follows from a direct application of \cite[Chapter~2, eq.~(12.16)]{Feller1968_Volume_I}.
\end{proof}%

\begin{lemma}\label{lem:binomial_identity_summing_upper_index}%
We have the identity
\begin{equation}%
\sum_{k = j}^{l - m} \frac{k}{l - k} \binom{l - k}{m} = \frac{l - m + jm}{m(l + 1 - j)} \binom{l + 1 - j}{m + 1}.
\end{equation}%
\end{lemma}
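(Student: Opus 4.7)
The plan is to rewrite the summand so that the sum reduces to two standard hockey-stick sums, and then to match the resulting closed form against the right-hand side via a single binomial manipulation.

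First I would use the elementary identity $\frac{1}{l-k}\binom{l-k}{m} = \frac{1}{m}\binom{l-k-1}{m-1}$ to rewrite the summand as
\begin{equation}
\frac{k}{l-k}\binom{l-k}{m} = \frac{k}{m}\binom{l-k-1}{m-1}.
\end{equation}
Next I would substitute $n = l-k-1$, so that $k = l-n-1$ and the summation index runs from $n=m-1$ (when $k=l-m$) up to $n=l-j-1$ (when $k=j$). The sum becomes
\begin{equation}
\frac{1}{m}\sum_{n=m-1}^{l-j-1} (l-n-1)\binom{n}{m-1} = \frac{l-1}{m}\sum_{n=m-1}^{l-j-1}\binom{n}{m-1} - \frac{1}{m}\sum_{n=m-1}^{l-j-1} n\binom{n}{m-1}.
\end{equation}

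The first inner sum is the hockey-stick identity $\sum_{n=m-1}^{l-j-1}\binom{n}{m-1} = \binom{l-j}{m}$. For the second inner sum I would use the relation $n\binom{n}{m-1} = m\binom{n+1}{m} - \binom{n}{m-1}$ (a direct consequence of $m\binom{n+1}{m} = (n+1)\binom{n}{m-1}$) to convert it into two more hockey-stick sums, yielding
\begin{equation}
\sum_{n=m-1}^{l-j-1} n\binom{n}{m-1} = m\binom{l-j+1}{m+1} - \binom{l-j}{m}.
\end{equation}
Substituting these back and simplifying gives
\begin{equation}
\sum_{k=j}^{l-m}\frac{k}{l-k}\binom{l-k}{m} = \frac{l}{m}\binom{l-j}{m} - \binom{l-j+1}{m+1}.
\end{equation}

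Finally I would match this with the claimed right-hand side by pulling out the common factor $\binom{l-j}{m}$, using $\binom{l+1-j}{m+1} = \frac{l+1-j}{m+1}\binom{l-j}{m}$ to obtain
\begin{equation}
\frac{l}{m}\binom{l-j}{m} - \frac{l-j+1}{m+1}\binom{l-j}{m} = \frac{l(m+1) - m(l-j+1)}{m(m+1)}\binom{l-j}{m} = \frac{l - m + jm}{m(m+1)}\binom{l-j}{m},
\end{equation}
which, upon rewriting $\frac{1}{m+1}\binom{l-j}{m} = \frac{1}{l+1-j}\binom{l+1-j}{m+1}$, coincides with the right-hand side of the lemma. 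The only delicate part is keeping the summation bounds correct through the index changes and verifying that both single-term boundary cases ($m=0$ or $j$ close to $l$) are handled consistently, but no genuine obstacle arises: once the summand is rewritten as $\frac{k}{m}\binom{l-k-1}{m-1}$, everything reduces to hockey-stick applications and simple algebra.
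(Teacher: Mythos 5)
Your proof is correct and follows essentially the same route as the paper: change of variable $n \approx l-k$, split the summand, and apply the hockey-stick identity, arriving at the same intermediate form $\frac{l}{m}\binom{l-j}{m} - \binom{l+1-j}{m+1}$ before the final rewrite. The only cosmetic difference is that the paper splits $\frac{l-n}{n}\binom{n}{m}$ first so both pieces are direct hockey-stick sums, whereas you apply the absorption identity before substituting and therefore need the extra decomposition $n\binom{n}{m-1} = m\binom{n+1}{m} - \binom{n}{m-1}$.
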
%

\begin{proof}%
Change the summation variable to $n = l - k$ to get
\begin{equation}%
\sum_{k = j}^{l - m} \frac{k}{l - k} \binom{l - k}{m} = \sum_{n = m}^{l - j} \frac{l - n}{n} \binom{n}{m}. \label{eqn:proof_summing_upper_index_1}
\end{equation}%
Splitting the summation and using the identity
\begin{equation}%
\sum_{n = L}^U \binom{n}{L} = \binom{U + 1}{L + 1}
\end{equation}%
produces
\begin{align}%
\eqref{eqn:proof_summing_upper_index_1} &= \sum_{n = m}^{l - j} \frac{l}{n} \binom{n}{m} - \sum_{n = m}^{l - j} \binom{n}{m} = \frac{l}{m} \sum_{n = m}^{l - j} \binom{n - 1}{m - 1} - \sum_{n = m}^{l - j} \binom{n}{m} \notag \\
&= \frac{l}{m} \sum_{n = m - 1}^{l - j - 1} \binom{n}{m - 1} - \sum_{n = m}^{l - j} \binom{n}{m} = \frac{l}{m} \binom{l - j}{m} - \binom{l + 1 - j}{m + 1} \notag \\
&= \frac{l(m + 1)}{m(l + 1 - j)} \binom{l + 1 - j}{m + 1} - \binom{l + 1 - j}{m + 1},
\end{align}%
proving the claim.
\end{proof}%

\begin{lemma}\label{lem:binomial_identity_mixing_moments}%
Define
\begin{equation}%
b_K(z) \defi \sum_{k = 0}^K C_k \binom{K + k}{K - k} z^k, \label{eqn:explicit_expression_mixing_moments}
\end{equation}%
where $C_k \defi \frac{1}{k + 1} \binom{2k}{k}$ are the Catalan numbers. The sequence $\{ b_K(z) \}_{K \ge 0}$ satisfies two recursions. For $K \ge 2$ it satisfies
\begin{equation}%
(K + 1) b_K(z) = (2K - 1)(1 + 2z) b_{K - 1}(z) - (K - 2) b_{K - 2}(z), \label{eqn:recursion_mixing_moments_Abate_Whitt}
\end{equation}%
alternatively, for $K \ge 0$, it satisfies
\begin{equation}%
b_{K + 1}(z) = b_K(z) + z \sum_{l = 0}^K b_l(z) b_{K - l}(z) \label{eqn:recursion_mixing_moments}
\end{equation}%
with $b_0(z) = 1$ and $b_1(z) = 1 + z$.
\end{lemma}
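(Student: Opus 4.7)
The strategy is to compute the bivariate generating function $B(x,z) \defi \sum_{K\ge 0} b_K(z)\, x^K$, derive a simple algebraic identity it satisfies, and then read off both recursions from it. Starting from \eqref{eqn:explicit_expression_mixing_moments} and swapping the order of summation (writing $K = k+j$ with $j \ge 0$), the standard identity $\sum_{j\ge 0} \binom{2k+j}{j} x^j = (1-x)^{-(2k+1)}$ gives
\begin{equation*}
B(x,z) = \frac{1}{1-x}\, C\!\left(\frac{zx}{(1-x)^2}\right),
\end{equation*}
where $C(u) \defi \sum_{k\ge 0} C_k\, u^k = (1-\sqrt{1-4u})/(2u)$ is the Catalan generating function, which satisfies $C(u) = 1 + u\, C(u)^2$. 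Substituting $u = zx/(1-x)^2$ and clearing denominators produces the key algebraic relation
\begin{equation*}
(1-x)\, B - zx\, B^2 = 1. \qquad (\star)
\end{equation*}

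To prove \eqref{eqn:recursion_mixing_moments}, I would rewrite $(\star)$ as $B = 1 + xB + zx\, B^2$ and expand $B^2$ via the Cauchy product; matching the coefficient of $x^{K+1}$ for each $K \ge 0$ yields the desired convolution recursion directly. The $x^0$ coefficient confirms $b_0 = 1$, and $b_1 = 1+z$ follows immediately from \eqref{eqn:explicit_expression_mixing_moments}.

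For \eqref{eqn:recursion_mixing_moments_Abate_Whitt}, the plan is to turn $(\star)$ into a second-order linear ODE in $x$ for $B$. Differentiating $(\star)$ in $x$ and using $(\star)$ to replace $zB^2$ by $\bigl((1-x)B - 1\bigr)/x$ gives $x(1 - x - 2zxB)\, B_x = B - 1$. By the quadratic formula applied to $(\star)$, $1 - x - 2zxB = \sqrt{(1-x)^2 - 4zx}$, so I would square this relation to eliminate the radical, differentiate once more, and divide by $2B_x$, obtaining the linear ODE
\begin{equation*}
x\bigl[1 - 3(1+2z)x + 2x^2\bigr] B_x + x^2\bigl[1 - 2(1+2z)x + x^2\bigr] B_{xx} = B - 1.
\end{equation*}
Extracting the coefficient of $x^K$ on both sides gives $K^2 b_K - (1+2z)(K-1)(2K-1) b_{K-1} + (K-1)(K-2) b_{K-2} = b_K$, which rearranges to $(K-1)(K+1)\, b_K = (K-1)\bigl[(1+2z)(2K-1) b_{K-1} - (K-2) b_{K-2}\bigr]$; dividing by $K-1$ (valid for $K \ge 2$) yields \eqref{eqn:recursion_mixing_moments_Abate_Whitt}.

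The main obstacle will be justifying the squaring-and-differentiating step used to eliminate the radical $\sqrt{(1-x)^2 - 4zx}$, and in particular the subsequent division by $2B_x$. Since $B_x(0,z) = b_1(z) = 1 + z \neq 0$, $B_x$ is a unit in the ring of formal power series in $x$, so the division is legitimate at the formal-power-series level. Because the resulting ODE is linear in $B$ with polynomial coefficients, it can alternatively be verified as a direct formal-power-series identity by multiplying through and reducing via $(\star)$, bypassing the analytic manipulations entirely.
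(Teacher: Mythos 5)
Your proposal is correct, and it takes a genuinely different route from the paper. The paper handles the two recursions separately: for \eqref{eqn:recursion_mixing_moments_Abate_Whitt} it simply cites Abate and Whitt (2010) (with a noted typo fix), and for \eqref{eqn:recursion_mixing_moments} it substitutes the explicit formula \eqref{eqn:explicit_expression_mixing_moments} into both sides, swaps a triple sum, invokes a Vandermonde-type identity (the paper's Lemma~\ref{lem:binomial_identity_using_Vandermonde_identity}), and closes with the Catalan convolution $C_{d+1}=\sum_{k}C_kC_{d-k}$. Your approach derives both recursions from the single algebraic relation $(1-x)B - zxB^2 = 1$ satisfied by the bivariate generating function $B(x,z)$. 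That relation already encodes the Catalan functional equation $C(u)=1+uC(u)^2$, so the Catalan convolution never has to be invoked separately, and the Vandermonde identity (Lemma~\ref{lem:binomial_identity_using_Vandermonde_identity}) is replaced by the elementary expansion $\sum_{j\ge0}\binom{2k+j}{j}x^j=(1-x)^{-(2k+1)}$. Reading off \eqref{eqn:recursion_mixing_moments} from $B=1+xB+zxB^2$ is immediate, and \eqref{eqn:recursion_mixing_moments_Abate_Whitt} follows by converting the algebraic equation to the stated second-order linear ODE and comparing $[x^K]$ coefficients; I verified that your ODE and the resulting relation $(K^2-1)\,b_K=(K-1)\bigl[(1+2z)(2K-1)b_{K-1}-(K-2)b_{K-2}\bigr]$ are correct. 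One small tightening: rather than squaring and dividing by $2B_x$, you can avoid any radical or invertibility discussion entirely by setting $S\defi 1-x-2zxB$, observing that $S^2=(1-x)^2-4zx\ifed P$ as a formal-power-series identity (a consequence of $(\star)$), differentiating the radical-free identity $xB_xS=B-1$, and multiplying through by $S$; this gives $xB_xP + x^2B_{xx}P + \tfrac{1}{2}x^2B_xP'=B-1$ directly in $\mathbb{Q}[z][[x]]$, which is your ODE. The upshot of your method is a unified derivation of both recursions and a transparent explanation of why they coexist; the paper's method for \eqref{eqn:recursion_mixing_moments} is more elementary in that it stays at the level of binomial-coefficient manipulation, but it does not actually prove \eqref{eqn:recursion_mixing_moments_Abate_Whitt}.
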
%

\begin{proof}%
The terms $b_{K}(z)$ appear in \cite[Section~3.3]{Abate2010_Integer_sequences}, where the authors derive \eqref{eqn:recursion_mixing_moments_Abate_Whitt}. We believe that a small typographical error appears in their recursion that we have fixed here. To do so, in \cite{Abate2010_Integer_sequences}, substitute (23) into (24) to obtain the correct form of (16).

We now derive the recursion \eqref{eqn:recursion_mixing_moments}. Since we already have the explicit expression \eqref{eqn:explicit_expression_mixing_moments} for $b_K(z)$, we will substitute this into \eqref{eqn:recursion_mixing_moments} and show that $b_{K + 1}(z)$ again is given by \eqref{eqn:explicit_expression_mixing_moments}.

Rewrite the first term on the right-hand side of \eqref{eqn:recursion_mixing_moments} as
\begin{equation}%
b_K(z) = \sum_{k = 0}^K C_k \binom{K + k}{K - k} z^k = \sum_{k = 0}^K \frac{K + 1 - k}{K + 1 + k} C_k \binom{K + 1 + k}{K + 1 - k} z^k. \label{eqn:proof_mixing_moments_0}
\end{equation}%

Substituting \eqref{eqn:explicit_expression_mixing_moments} into the finite sum of \eqref{eqn:recursion_mixing_moments} gives
\begin{align}%
z \sum_{l = 0}^K b_l(z) b_{K - l}(z) &= z \sum_{l = 0}^K \Bigl( \sum_{k = 0}^l C_k \binom{l + k}{l - k} z^k \Bigr) \Bigl( \sum_{m = 0}^{K - l} C_m \binom{K - l + m}{K - l - m} z^m \Bigr) \notag \\
&= \sum_{l = 0}^K \sum_{k = 0}^l \sum_{m = 0}^{K - l} C_k C_m \binom{l + k}{l - k} \binom{K - l + m}{K - l - m} z^{k + m + 1}. \label{eqn:proof_mixing_moments_1}
\end{align}%
Switch the order of the triple summation to obtain
\begin{align}%
\eqref{eqn:proof_mixing_moments_1} &= \sum_{k = 0}^K \sum_{m = 0}^{K - k} C_k C_m z^{k + m + 1} \sum_{l = k}^{K - m} \binom{l + k}{l - k} \binom{K - l + m}{K - l - m} \notag \\
&= \sum_{k = 0}^K \sum_{m = 0}^{K - k} C_k C_m z^{k + m + 1} \sum_{n = 0}^{K - k - m} \binom{n + 2k}{n} \binom{K - k - m  - n + 2m}{K - k - m - n}, \label{eqn:proof_mixing_moments_2}
\end{align}%
where we introduced $n = l - k$. Employing Lemma~\ref{lem:binomial_identity_using_Vandermonde_identity} for the inner-most summation results in
\begin{equation}%
\eqref{eqn:proof_mixing_moments_2} = \sum_{k = 0}^K \sum_{m = 0}^{K - k} C_k C_m z^{k + m + 1} \binom{K + 1 + k + m}{K - (k + m)}. \label{eqn:proof_mixing_moments_3}
\end{equation}%
The double summation sums over all $k,m \ge 0$ such that $0 \le k + m \le K$. An equivalent summation is over the diagonals $k + m = d$ with $0 \le d \le K$ and $k,m \ge 0$:
\begin{align}%
\eqref{eqn:proof_mixing_moments_3} &= \sum_{d = 0}^K \sum_{k,m \ge 0 \,:\, k + m = d} C_k C_m z^{k + m + 1} \binom{K + 1 + k + m}{K - (k + m)} \notag \\
&= \sum_{d = 0}^K \sum_{k = 0}^d C_k C_{d - k} z^{d + 1} \binom{K + 1 + d}{K - d}. \label{eqn:proof_mixing_moments_4}
\end{align}%
Using the identity $C_{d + 1} = \sum_{k = 0}^d C_k C_{d - k}$, setting $k = d + 1$ and rewriting the binomial coefficient produces
\begin{align}%
\eqref{eqn:proof_mixing_moments_4} &= \sum_{d = 0}^K C_{d + 1} z^{d + 1} \binom{K + 1 + d}{K - d} = \sum_{k = 1}^{K + 1} C_k \binom{K + k}{K + 1 - k}  z^{k} \notag \\
&= \sum_{k = 1}^{K + 1} \frac{2k}{K + 1 + k} C_k \binom{K + 1 + k}{K + 1 - k} z^k. \label{eqn:proof_mixing_moments_5}
\end{align}%

Finally, summing \eqref{eqn:proof_mixing_moments_0} and \eqref{eqn:proof_mixing_moments_5} yields
\begin{align}%
&b_K(z) + z \sum_{l = 0}^K b_l(z) b_{K - l}(z) \notag \\
&= \sum_{k = 0}^K \frac{K + 1 - k}{K + 1 + k} C_k \binom{K + 1 + k}{K + 1 - k} z^k + \sum_{k = 1}^{K + 1} \frac{2k}{K + 1 + k} C_k \binom{K + 1 + k}{K + 1 - k} z^k \notag \\
&= \sum_{k = 0}^{K + 1} C_k \binom{K + 1 + k}{K + 1 - k} z^k = b_{K + 1}(z),
\end{align}%
proving the recursion \eqref{eqn:recursion_mixing_moments} is correct.
\end{proof}%


\section{Random-product representation}%
\label{app:random-product_representation}%

The results we present in Appendix~\ref{app:single-server_queues} make use of the \textit{random-product representation} theory recently developed and discussed in \cite{Buckingham2015_Kolmogorov,Fralix2015_MC_Similar}.  Using this theory to study a given Markov process $X$ requires selecting an additional Markov process $\alt{X} \defi \{ \alt{X}(t) \}_{t \ge 0}$ that shares the same state space $\statespace$ as $X$. The elements of the transition rate matrix $\alt{\mtrx{Q}}$ of $\alt{X}$ must satisfy the following two properties, see \cite[Section~1]{Buckingham2015_Kolmogorov} and \cite[Section~2]{Fralix2015_MC_Similar}:
\begin{enumerate}[label = \textup{(\roman*)}]%
\item For each $x \in \statespace$, $\alt{q}(x) \defi - \alt{q}(x,x) = \sum_{y \neq x} \alt{q}(x,y) = q(x)$;
\item For each $x,y \in \statespace, ~ x \neq y$, $\alt{q}(x,y) > 0$ if and only if $q(y,x) > 0$.
\end{enumerate}%

Associate with the Markov process $X$ its transition times $\{ T_n \}_{n \ge 0}$, where $T_0 \defi 0$ and $T_n$ represents the $n$-th transition time of $X$. From the transition times we create the embedded discrete-time Markov chain $\{ X_n \}_{n \ge 0}$ as $X_n \defi X(T_n), ~ n \ge 0$. The sequences $\{ \alt{T}_n \}_{n \ge 0}$ and $\{ \alt{X}_n \}_{n \ge 0}$ are constructed and defined similarly.

We will also need to make use of discrete-time hitting-time random variables. We define for each set $A \subset \statespace$,
\begin{equation}%
\eta_A \defi \inf\{ n \ge 1 : X_n \in A \}
\end{equation}%
as the first time the embedded chain make a transition into $A$. $\eta_x$ should be understood to mean $\eta_{\{ x \}}$. The continuous-time hitting-time random variable $\ta_A$ is defined analogously to the definition in Section~\ref{subsec:notation}, and $\na_A$ represents the first time the embedded DTMC of $\alt{X}$ reaches the set $A$.

The random-product representation can be used to determine the Laplace transforms of the transition functions, when the process is assumed to start in a fixed state $x \in \statespace$.  We will employ the following theorem, which originally appeared in \cite[Corollary~2.1 and Theorem~2.1]{Fralix2015_MC_Similar}.

\begin{theorem}\label{thm:Laplace_transform_transition_functions}%
Suppose $y \in \statespace$, where $y \neq x$. Then the Laplace transform $\pi_{x,y}(\al)$ of $p_{x,y}(\cdot)$ satisfies
\begin{equation}%
\pi_{x,y}(\al) = \pi_{x,x}(\al) \Efxd{ y }{\euler^{-\al \ta_x} \prod_{l = 1}^{\na_x} \rp{l}},
\end{equation}%
where
\begin{equation}%
\pi_{x,x}(\al) = \frac{1}{(q(x) + \al)(1 - \E{ x }{\euler^{-\al \tau_x}})}.
\end{equation}%
\end{theorem}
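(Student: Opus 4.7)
The plan is to prove the two formulas in turn. For $\pi_{x,x}(\al) = 1/((q(x)+\al)(1 - \E{x}{\euler^{-\al \tau_x}}))$, I would exploit the renewal structure of visits to $x$. Under $\Prob{x}{\cdot}$, let $R_n$ denote the $n$-th return time to $x$ (so $R_1 = \tau_x$) and $E_{n+1}$ the exponential sojourn of rate $q(x)$ following the $n$-th visit; by the strong Markov property at each return, the cycles $R_1,\, R_2 - R_1, \ldots$ are i.i.d.\ copies of $\tau_x$. Decomposing
\[
\int_0^\infty \euler^{-\al t} \ind{X(t) = x} \,\dinf t = \sum_{n \ge 0} \euler^{-\al R_n} \int_0^{E_{n+1}} \euler^{-\al s} \,\dinf s,
\]
taking expectations, and using independence yields the geometric sum $\tfrac{1}{q(x)+\al} \sum_{n \ge 0} \bigl(\E{x}{\euler^{-\al \tau_x}}\bigr)^n$, which produces the claimed expression.

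For the factorization of $\pi_{x,y}(\al)$, I would apply Theorem~\ref{thm:taboo_transition_functions} with $A = \{x\}$ to obtain
\[
\pi_{x,y}(\al) = \pi_{x,x}(\al) (q(x) + \al) \Efxd{x}{\int_0^{\tau_x} \euler^{-\al t} \ind{X(t) = y} \,\dinf t},
\]
so the problem reduces to proving the identity
\[
(q(x) + \al) \Efxd{x}{\int_0^{\tau_x} \euler^{-\al t} \ind{X(t) = y} \,\dinf t} = \Efxd{y}{\euler^{-\al \ta_x} \prod_{l=1}^{\na_x} \rp{l}}.
\]
I would establish this by expanding both sides as sums over embedded-chain paths. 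Conditioning on $\{X_n\}$ and using that the sojourn $E_k$ is exponential with rate $q(X_{k-1})$ independently of the chain, the left-hand side becomes
\[
\sum_{n \ge 1} \sum_{\substack{z_0 = x,\, z_n = y \\ z_1,\ldots,z_{n-1} \neq x}} \frac{q(x) + \al}{q(z_n) + \al} \prod_{l=1}^n \frac{q(z_{l-1}, z_l)}{q(z_{l-1}) + \al}.
\]

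For the right-hand side, conditioning on the embedded chain $\{\alt{X}_n\}$ of $\alt{X}$ yields sojourn-time factors $\prod_j q(\alt{X}_{j-1})/(q(\alt{X}_{j-1}) + \al)$ (using $\alt{q}(\alt{X}_{j-1}) = q(\alt{X}_{j-1})$), while summing over paths $\alt{z}_0 = y, \ldots, \alt{z}_n = x$ with $\alt{z}_1,\ldots,\alt{z}_{n-1} \neq x$ brings out embedded-chain probabilities $\prod_l \alt{q}(\alt{z}_{l-1}, \alt{z}_l)/q(\alt{z}_{l-1})$. The random-product factor $\prod_l \rp{l}$ then cancels $\prod_l \alt{q}(\alt{z}_{l-1}, \alt{z}_l)$ in the denominator of the path probabilities and substitutes $\prod_l q(\alt{z}_l, \alt{z}_{l-1})$ in its place. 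By property (ii) of $\alt{\mtrx{Q}}$, these $\alt{X}$-paths are in bijection with $X$-paths $z_0 = x, \ldots, z_n = y$ avoiding $x$ in the interior via the reversal $z_l = \alt{z}_{n-l}$, so after reindexing the right-hand side becomes
\[
\sum_{n \ge 1} \sum_{\substack{z_0 = x,\, z_n = y \\ z_1,\ldots,z_{n-1} \neq x}} \prod_{l=1}^n \frac{q(z_{l-1}, z_l)}{q(z_l) + \al}.
\]
Since $z_0 = x$ and $z_n = y$, the ratio of the per-path summands on the two sides telescopes as $\tfrac{q(x)+\al}{q(z_n)+\al} \cdot \prod_{l=1}^n \tfrac{q(z_l)+\al}{q(z_{l-1})+\al} = 1$, proving the identity.

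The main obstacle will be the careful bookkeeping of the random-product cancellation together with the path-reversal bijection: one must verify that the $\alt{q}$ factors vanish and that the reversed paths remain subject to the correct avoidance constraint at $x$. A minor technical point is justifying the interchange of summation and expectation for $\al \in \Complex_+$; this follows because each series is dominated term-by-term by its value at $\RealPart{\al}$, where both sides are finite expected integrals of non-negative bounded functions.
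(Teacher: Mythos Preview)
The paper does not actually prove this theorem: it is quoted from \cite{Fralix2015_MC_Similar} (Corollary~2.1 and Theorem~2.1 there) and stated in Appendix~B without proof. So there is no ``paper's own proof'' to compare against.

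That said, your argument is correct and self-contained. The renewal decomposition for $\pi_{x,x}(\al)$ is standard and goes through exactly as you describe, with convergence of the geometric series for $\al \in \Complex_+$ guaranteed by $|\E{x}{\euler^{-\al\tau_x}}| \le \E{x}{\euler^{-\RealPart{\al}\,\tau_x}} < 1$. For the factorization, your path-expansion strategy works: after conditioning on the embedded chain, the left-hand side contributes $\tfrac{q(x)+\al}{q(y)+\al}\prod_l \tfrac{q(z_{l-1},z_l)}{q(z_{l-1})+\al}$ per path, while on the $\alt{X}$ side the path probability $\prod_l \alt q(\alt z_{l-1},\alt z_l)/q(\alt z_{l-1})$, the sojourn factor $\prod_l q(\alt z_{l-1})/(q(\alt z_{l-1})+\al)$, and the random product $\prod_l q(\alt z_l,\alt z_{l-1})/\alt q(\alt z_{l-1},\alt z_l)$ combine so that every $\alt q$ and every bare $q(\cdot)$ cancels, leaving $\prod_l q(\alt z_l,\alt z_{l-1})/(q(\alt z_{l-1})+\al)$. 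Property~(ii) guarantees the reversal $z_l=\alt z_{n-l}$ is a bijection between admissible $\alt X$-paths from $y$ to $x$ and admissible $X$-paths from $x$ to $y$ (both avoiding $x$ in the interior), and the telescoping ratio you identify shows the two per-path weights coincide. The dominated-convergence justification you sketch is adequate. One small point worth making explicit: the random product is only defined on paths where each $\alt q(\alt z_{l-1},\alt z_l)>0$, but those are precisely the paths carrying positive $\alt X$-probability, so no zero-denominator issue arises.
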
%


\section{Results for $M/M/1$ queues}%
\label{app:single-server_queues}%

Here we derive key quantities associated with $M/M/1$ queues. The first lemma is a restatement of \cite[Lemma~4]{Doroudi2015_CAP}, which was inspired by Problems 22 and 23 of \cite[Chapter~7]{Karlin1975_First_course_stochastic_processes}.

\begin{lemma}\label{lem:exponential_clearings_time_to_reach_state_j}%
Suppose $\{ X(t) \}_{t \ge 0}$ is an $M/M/1$ queueing model with exponential clearings. Arrivals occur according to a Poisson process with rate $\la$, each service is exponentially distributed with rate $\mu$ and there is an external Poisson process having rate $\theta$ of clearing instants, where, whenever a clearing occurs, all customers in the system at the clearing time are removed. Then, for $j \ge 1$,
\begin{equation}%
\Efxd{ 1 }{ \ind{ \tau_j < \tau_0 } \euler^{-\al \tau_j} } = \frac{\bigl( \frac{\la}{\mu} \phi_{\la,\mu}(\theta + \al) \bigr)^{j - 1} ( 1 - \frac{\la}{\mu} \phi_{\la,\mu}(\theta + \al)^2)}{1 - \bigl( \frac{\la}{\mu} \phi_{\la,\mu}(\theta + \al)^2 \bigr)^j}.
\end{equation}%
\end{lemma}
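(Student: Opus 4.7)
The plan is to reduce the clearing model to a standard $M/M/1$ queue via a killing argument and then determine the first-passage Laplace transform by solving a two-point boundary-value problem of gambler's-ruin type.

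First, I would decouple the clearing process from the underlying queue. Couple $X$ with a pure $M/M/1$ queue $Q$ having the same arrival and service rates, driven along the same sample path until the first clearing epoch. On the event that no clearing occurs in $[0,\tau_j^Q]$ the two processes coincide, and in particular $\tau_j=\tau_j^Q$; otherwise a clearing drags $X$ to $0$ before it can reach $j$, so $\ind{\tau_j<\tau_0}=0$. Because the clearing epochs form a rate-$\theta$ Poisson process independent of $Q$, conditioning on the full trajectory of $Q$ and marginalising the clearings contributes a factor $\euler^{-\theta\tau_j^Q}$, which yields
\begin{equation*}
\Efxd{1}{\ind{\tau_j<\tau_0}\euler^{-\al\tau_j}}=\Efxd{1}{\ind{\tau_j^Q<\tau_0^Q}\euler^{-(\al+\theta)\tau_j^Q}}.
\end{equation*}
Writing $s\defi\al+\theta$, it remains to evaluate the right-hand side on a pure $M/M/1$.

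Second, I would apply a classical first-step analysis. For $0\le k\le j$, set $h_k(s)\defi\Efxd{k}{\ind{\tau_j^Q<\tau_0^Q}\euler^{-s\tau_j^Q}}$; conditioning on the first jump of $Q$ (an exponential sojourn of rate $\la+\mu$, followed by an upward step with probability $\la/(\la+\mu)$) produces
\begin{equation*}
(\la+\mu+s)h_k(s)=\la h_{k+1}(s)+\mu h_{k-1}(s),\quad 1\le k\le j-1,
\end{equation*}
with boundary values $h_0(s)=0$ and $h_j(s)=1$. The characteristic polynomial $\la z^2-(\la+\mu+s)z+\mu$ has roots $z_\pm$ whose product equals $\mu/\la$; comparison with \eqref{eqn:busy_period_number_departures_MM1_LST} identifies the smaller root as $z_-=\phi_{\la,\mu}(s)$, whence $z_+=1/(\rho\phi_{\la,\mu}(s))$ with $\rho\defi\la/\mu$. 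The boundary conditions then force $h_k(s)=(z_+^k-z_-^k)/(z_+^j-z_-^j)$, and simplifying $h_1(s)$ with the explicit values of the roots, followed by substituting $s=\al+\theta$, yields precisely the stated expression.

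The only mildly delicate point is the decoupling in the first paragraph: one has to verify cleanly that integrating out the clearing process over the $Q$-measurable random horizon $\tau_j^Q$ genuinely contributes $\euler^{-\theta\tau_j^Q}$, which follows by conditioning on $Q$ and invoking independence of $Q$ and the clearing Poisson process. Once this is handled, the remainder is a routine gambler's-ruin computation; indeed the lemma is a restatement of \cite[Lemma~4]{Doroudi2015_CAP}, whose proof proceeds along the same lines.
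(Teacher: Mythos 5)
Your proof is correct and follows the same key reduction as the paper: both exploit independence of the clearing Poisson process to rewrite the target as a pure $M/M/1$ first-passage expectation with the rate shifted from $\alpha$ to $\alpha+\theta$, i.e.\ $\Efxd{1}{\ind{\tau_j^Q<\tau_0^Q}\euler^{-(\alpha+\theta)\tau_j^Q}}$. The only difference is that the paper then defers to the proof of Lemma~4 in Doroudi et al.\ for this remaining $M/M/1$ quantity, whereas you carry out the gambler's-ruin computation explicitly (difference equation with boundary values $h_0=0$, $h_j=1$, characteristic roots $z_-=\phi_{\la,\mu}(s)$ and $z_+=\mu/(\la\phi_{\la,\mu}(s))$, and $h_1=(z_+-z_-)/(z_+^j-z_-^j)$), which indeed simplifies to the stated formula.
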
%

\begin{proof}%
The time $\tau_0$ is the minimum of the busy period of a regular $M/M/1$ queue, i.e., $B_{\la,\mu}$, and an exponential random variable with parameter $\theta$. Note that the two random variables are independent. So,
\begin{equation}%
\Efxd{ 1 }{ \ind{ \tau_j < \tau_0 } \euler^{-\al \tau_j} } = \Efxd{ 1 }{ \ind{ \tau_j < B_{\la,\mu}} \ind{ \tau_j < E_\theta } \euler^{-\al \tau_j} }, \label{lem:proof_exponential_clearings_time_to_reach_state_j_1}
\end{equation}%
where $E_\theta$ is an exponential random variable with parameter $\theta$. Under this description, the time $\tau_j$ is equal to the time $\tau_j^*$ to reach state $j$ in a regular $M/M/1$ queue. Furthermore, conditioning on both $\tau_{j}^{*}$ and $B_{\la, \mu}$ yields
\begin{align}%
\eqref{lem:proof_exponential_clearings_time_to_reach_state_j_1} &= \Efxd{ 1 }{ \ind{ \tau_j^* < B_{\la,\mu}} \ind{ \tau_j^* < E_\theta } \euler^{-\al \tau_j^*} } \notag \\
&= \Efxd{ 1 }{ \Efxd{ \ind{ \tau_j^* < B_{\la,\mu}} \ind{ \tau_j^* < E_\theta } \euler^{-\al \tau_j^*} \mid B_{\la,\mu}, \tau_j^*} } \notag \\
&= \Efxd{ 1 }{ \ind{ \tau_j^* < B_{\la,\mu}} \euler^{-\al \tau_j^*} \Efxd{ \ind{ \tau_j^* < E_\theta } \mid B_{\la,\mu}, \tau_j^*} } \notag \\
&= \Efxd{ 1 }{ \ind{ \tau_j^* < B_{\la,\mu}} \euler^{-(\theta + \al) \tau_j^*}}.
\end{align}%
The remainder of the proof follows from the proof of \cite[Lemma~4]{Doroudi2015_CAP}.
\end{proof}%

The following lemma is a minor generalization of \cite[Theorem~2]{Doroudi2015_CAP}, in that we verify it is still valid for $\al \in \Complex_+$.

\begin{lemma}\label{lem:exponential_clearings_time_spent_in_state_j}%
Suppose $\{ X(t) \}_{t \ge 0}$ is the clearing model of \textup{Lemma~\ref{lem:exponential_clearings_time_to_reach_state_j}}. Then for each $j,k \ge 1$,
\begin{align}%
&\Efxd{ k }{\int_0^{\tau_0} \euler^{-\al t}\ind{X(t) = j} \, \dinf t } \notag \\
&= \begin{cases}%
\frac{\frac{\la}{\mu} \phi_{\la,\mu}(\theta + \al)}{\la ( 1 - \frac{\la}{\mu} \phi_{\la,\mu}(\theta + \al)^2)} \bigl( \frac{\la}{\mu} \phi_{\la,\mu}(\theta + \al) \bigr)^{j - k} \bigl( 1 - \bigl( \frac{\la}{\mu} \phi_{\la,\mu}(\theta + \al)^2 \bigr)^k \bigr), & 1 \le k \le j - 1, \\
\frac{\frac{\la}{\mu} \phi_{\la,\mu}(\theta + \al)}{\la ( 1 - \frac{\la}{\mu} \phi_{\la,\mu}(\theta + \al)^2)} \phi_{\la,\mu}(\theta + \al)^{k - j} \bigl( 1 - \bigl( \frac{\la}{\mu} \phi_{\la,\mu}(\theta + \al)^2 \bigr)^j \bigr) , & k \ge j.
\end{cases}%
\end{align}%
\end{lemma}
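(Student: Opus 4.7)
The plan is to invoke \cite[Theorem~2]{Doroudi2015_CAP}, which establishes the stated identity for real $\al > 0$, and extend it to all of $\Complex_+$ by analytic continuation. The argument has two ingredients: holomorphicity of both sides on $\Complex_+$, followed by the identity theorem for holomorphic functions.

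First I will verify that the left-hand side is holomorphic on $\Complex_+$. Since clearings arrive at rate $\theta$, under $\Prob{k}{\cdot}$ the hitting time $\tau_0$ is stochastically dominated by an exponential random variable of rate $\theta$, so the function $t \mapsto \Prob{k}{X(t) = j,~ t < \tau_0}$ is bounded above by $\euler^{-\theta t}$. Fubini then writes the LHS as
\begin{equation*}
\int_0^\infty \euler^{-\al t}\, \Prob{k}{X(t)=j,~ t < \tau_0}\, \dinf t,
\end{equation*}
which is the Laplace transform of an exponentially integrable function and is therefore holomorphic on $\Complex_+$ (and in fact on the larger strip $\RealPart{\al} > -\theta$).

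Next I will handle the right-hand side. The Laplace-Stieltjes transform $\phi_{\la,\mu}(\cdot)$ of the positive random variable $B_{\la,\mu}$ is holomorphic on $\Complex_+$, so the same is true for $\phi_{\la,\mu}(\theta + \al)$. The one nontrivial point is non-vanishing of the denominator $1 - \frac{\la}{\mu}\phi_{\la,\mu}(\theta+\al)^2$ on $\Complex_+$. For real $s > 0$, the busy-period quadratic $\la \phi^2 - (\la + \mu + s)\phi + \mu = 0$ has two roots $\phi_-(s) < 1 < \phi_+(s)$ with product $\phi_-(s)\phi_+(s) = \mu/\la$, whence $\frac{\la}{\mu}\phi_-(s)^2 = \phi_-(s)/\phi_+(s) < 1$. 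Combined with the standard Laplace-transform estimate $|\phi_{\la,\mu}(\theta+\al)| \le \phi_{\la,\mu}(\theta + \RealPart{\al})$, this gives $|\frac{\la}{\mu}\phi_{\la,\mu}(\theta+\al)^2| < 1$ on $\Complex_+$, so the denominator is bounded away from zero there and the RHS is holomorphic on all of $\Complex_+$.

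With both sides holomorphic on the connected open set $\Complex_+$ and coinciding on the positive real axis $(0,\infty)$ by \cite[Theorem~2]{Doroudi2015_CAP}, the identity theorem for holomorphic functions extends the equality to all of $\Complex_+$. The main obstacle I anticipate is the clean verification of non-vanishing of the denominator; the product-of-roots identity $\phi_-\phi_+ = \mu/\la$ is the decisive tool. An alternative route would be to rerun the Doroudi et al.\ argument directly with complex $\al$, noting that each step---a strong Markov decomposition at $\tau_j$ followed by a geometric-series summation over successive returns to state $j$ before the clearing time---carries over verbatim once absolute convergence is guaranteed by $|\euler^{-\al t}| \le \euler^{-\RealPart{\al} t}$. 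The analytic-continuation route is shorter but hides the one place where complex $\al$ demands genuine care.
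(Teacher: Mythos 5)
Your proposal is correct, but it takes a genuinely different route from the paper's. The paper's proof does \emph{not} argue by analytic continuation from Doroudi et al.'s real-axis result. Instead, it re-derives the identity directly for complex $\al$: it first computes the case $k = 1$ by equating two expressions for the Laplace transform $\pi_{0,j}(\al)$ of the clearing model's transition function --- one obtained from Theorem~\ref{thm:taboo_transition_functions}, the other from the random-product representation of Theorem~\ref{thm:Laplace_transform_transition_functions} in Appendix~\ref{app:random-product_representation} --- and then bootstraps to general $k$ via a strong Markov decomposition at $\tau_k$ (when $2 \le k \le j$) or at $\tau_j$ (when $k > j$), combined with Lemma~\ref{lem:exponential_clearings_time_to_reach_state_j}. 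Your route is shorter and has the virtue of cleanly isolating where complex $\al$ actually matters (the nonvanishing of $1 - \frac{\la}{\mu}\phi_{\la,\mu}(\theta+\al)^2$, which you settle via the product-of-roots identity $\phi_-\phi_+ = \mu/\la$ and the modulus bound $|\phi_{\la,\mu}(\theta+\al)|\le\phi_{\la,\mu}(\theta+\RealPart{\al})\le\phi_{\la,\mu}(\theta)$, giving a uniform lower bound on the denominator). The paper's route buys self-containment and an independent, probabilistic derivation that does not lean on the external real-axis statement; it also exercises the random-product machinery that the paper needs elsewhere, so the extra length is amortized. Both arguments are sound; yours is the slicker verification, the paper's is the more instructive one.
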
%

\begin{proof}%
Define the Laplace transform of the transition functions
\begin{equation}%
\pi_{0,j}(\al) \defi \int_0^\infty \euler^{-\al t} p_{0,j}(t) \, \dinf t.
\end{equation}%
The state space for this single-server queue is $\statespace = \Nat_0$. Select $A = \{ 0 \}$ and $B = \statespace \setminus A$ and apply Theorem~\ref{thm:taboo_transition_functions} to obtain for $j \ge 1$,
\begin{align}%
\pi_{0,j}(\al) &= \pi_{0,0}(\al) (\la + \al) \Efxd{ 0 }{ \int_0^{\tau_0} \euler^{-\al t} \ind{ X(t) = j } \, \dinf t } \notag \\
&= \pi_{0,0}(\al) \la \Efxd{ 1 }{ \int_0^{\tau_0} \euler^{-\al t} \ind{ X(t) = j } \, \dinf t }. \label{eqn:proof_exponential_clearings_1}
\end{align}%
We can use the random-product representation of Appendix~\ref{app:random-product_representation} to derive another expression for $\pi_{0,j}(\al)$. Construct a Markov process $\alt{X} \defi \{ \alt{X}(t) \}_{t \ge 0}$ with transition rates $\alt{q}(i,i - 1) = \la + \theta$ and $\alt{q}(i,i + 1) = \mu$ for $i \ge 1$. $\alt{X}$ also has transitions from state 0 to every other state, but these do not factor into the calculations so there is no need to formally define them here. From Theorem~\ref{thm:Laplace_transform_transition_functions},
\begin{align}%
\pi_{0,j}(\al) &= \pi_{0,0}(\al) \Efxd{ j }{ \euler^{-\al \ta_0} \prod_{l = 1}^{\na_0} \rp{l} } = \pi_{0,0}(\al) \Efxd{ 1 }{ \euler^{-\al B_{\mu,\la + \theta}} \bigl( \frac{\la}{\la + \theta} \bigr)^{D_{\mu,\la + \theta}} }^j \notag \\
&= \pi_{0,0}(\al) \bigl( \frac{\la}{\mu} \phi_{\la,\mu}(\theta + \al) \bigr)^j. \label{eqn:proof_exponential_clearings_2}
\end{align}%
Combining \eqref{eqn:proof_exponential_clearings_1} with \eqref{eqn:proof_exponential_clearings_2} gives
\begin{equation}%
\Efxd{ 1 }{ \int_0^{\tau_0} \euler^{-\al t} \ind{ X(t) = j } \, \dinf t } = \frac{1}{\la} \bigl( \frac{\la}{\mu} \phi_{\la,\mu}(\theta + \al) \bigr)^j.
\end{equation}%
For now, abbreviate $\phi \defi \phi_{\la,\mu}(\theta + \al)$ and $r \defi \frac{\la}{\mu} \phi_{\la,\mu}(\theta + \al)$. The remaining expected values can be computed. First, for $2 \le k \le j$,
\begin{align}%
\Efxd{ 1 }{ \int_0^{\tau_0} \euler^{-\al t} \ind{ X(t) = j } \, \dinf t } &= \Efxd{ 1 }{ \ind{ \tau_k < \tau_0 } \int_0^{\tau_0} \euler^{-\al t} \ind{ X(t) = j } \, \dinf t } \notag \\
&= \Efxd{ 1 }{ \ind{ \tau_k < \tau_0 } \euler^{-\al \tau_k} \int_{\tau_k} ^{\tau_0} \euler^{-\al (t - \tau_0)} \ind{ X(t) = j } \, \dinf t } \notag \\
&= \Efxd{ 1 }{ \ind{ \tau_k < \tau_0 } \euler^{-\al \tau_k} } \Efxd{ k }{ \int_0^{\tau_0} \euler^{-\al t} \ind{ X(t) = j } \, \dinf t }.
\end{align}%
Furthermore, from Lemma~\ref{lem:exponential_clearings_time_to_reach_state_j},
\begin{equation}%
\Efxd{ 1 }{ \ind{ \tau_k < \tau_0 } \euler^{-\al \tau_k} } = \frac{r^{k - 1} ( 1 - r \phi)}{1 - (r \phi)^k},
\end{equation}%
meaning
\begin{equation}%
\Efxd{ k }{ \int_0^{\tau_0} \euler^{-\al t} \ind{ X(t) = j } \, \dinf t } = \frac{r}{\la (1 - r \phi)} r^{j - k} (1 - (r \phi)^k).
\end{equation}%
Deriving the expected values when $k > j$ is a little more straightforward. Here,
\begin{align}%
\Efxd{ k }{ \int_0^{\tau_0} \euler^{-\al t} \ind{ X(t) = j } \, \dinf t } &= \Efxd{ k }{ \ind{ \tau_j < \tau_0 } \euler^{-\al \tau_j} \int_{\tau_j}^{\tau_0} \euler^{-\al (t - \tau_j)} \ind{ X(t) = j } \, \dinf t } \notag \\
&= \Efxd{ k }{ \ind{ \tau_j < \tau_0 } \euler^{-\al \tau_j} } \Efxd{ j }{ \int_0^{\tau_0} \euler^{-\al t} \ind{ X(t) = j } \, \dinf t } \notag \\
&= \frac{r}{\la (1 - r \phi)} \phi^{k - j} (1 - (r \phi)^j),
\end{align}%
which proves the claim.
\end{proof}%

\begin{lemma}\label{lem:recursion_busy_period_Poisson_points}%
The expectation $\rtwo{\la,\mu,\theta}{i}{\al}$ defined in \eqref{eqn:definition_busy_period_Poisson_points}, satisfies for $i \ge 1$ the recursion
\begin{equation}%
\rtwo{\la,\mu,\theta}{i}{\al} = \frac{\theta}{\la + \mu + \theta + \al} \rtwo{\la,\mu,\theta}{i - 1}{\al} + \frac{\la}{\la + \mu + \theta + \al} \sum_{k = 0}^i \rtwo{\la,\mu,\theta}{i - k}{\al} \rtwo{\la,\mu,\theta}{k}{\al} \label{eqn:recursion_busy_period_Poisson_points}
\end{equation}%
and $\rtwo{\la,\mu,\theta}{0}{\al} = \phi_{\la,\mu}(\theta + \al)$.
\end{lemma}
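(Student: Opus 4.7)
The plan is to derive the recursion by a first-step analysis, conditioning on the nature and timing of the first event after time $0$ in the joint process consisting of the $M/M/1$ queue and the independent Poisson clearing stream.

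First I will handle the base case. Since $\{\La_\theta(t)\}_{t \ge 0}$ is independent of $B_{\la,\mu}$, conditioning on $B_{\la,\mu}$ gives
\begin{equation*}
\rtwo{\la,\mu,\theta}{0}{\al} = \E{1}{\euler^{-\al B_{\la,\mu}} \Prob{\La_\theta(B_{\la,\mu}) = 0 \mid B_{\la,\mu}}} = \E{1}{\euler^{-(\al+\theta) B_{\la,\mu}}} = \phi_{\la,\mu}(\theta + \al),
\end{equation*}
by definition of the Laplace-Stieltjes transform $\phi_{\la,\mu}$.

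For $i \ge 1$, I will use the standard competing-exponentials representation. Starting from one customer, the next event is driven by three independent exponential clocks (arrival with rate $\la$, service completion with rate $\mu$, Poisson mark with rate $\theta$), so the time $T$ of the first event is exponential with rate $\la+\mu+\theta$, and independently of $T$, the first event is an arrival, a service, or a Poisson mark with probabilities $\la/(\la+\mu+\theta)$, $\mu/(\la+\mu+\theta)$, $\theta/(\la+\mu+\theta)$, respectively. The key identity that will appear in every branch is
\begin{equation*}
\E{\euler^{-\al T}} = \frac{\la+\mu+\theta}{\la+\mu+\theta+\al}.
\end{equation*}

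I then compute the three conditional contributions to $\E{1}{\euler^{-\al B_{\la,\mu}} \ind{\La_\theta(B_{\la,\mu})=i}}$. If the first event is a service completion, then $B_{\la,\mu} = T$ and the Poisson process registered no points on $[0,T)$, so this branch contributes $0$ to the case $i \ge 1$. If the first event is a Poisson mark, then one Poisson point has occurred, the queue still has one customer, and by the strong Markov property together with the memorylessness of the exponential service and arrival clocks, the residual busy period and its Poisson count are distributed as an independent copy of $(B_{\la,\mu}, \La_\theta(B_{\la,\mu}))$; the contribution is thus $\frac{\theta}{\la+\mu+\theta+\al}\,\rtwo{\la,\mu,\theta}{i-1}{\al}$. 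If the first event is an arrival, the queue has two customers, and the remaining busy period decomposes as $B_1 + B_2$ with $B_1, B_2$ i.i.d.\ copies of $B_{\la,\mu}$ (see, e.g., \cite[p.~32]{Takacs1962_Theory_of_queues}); restricting the independent Poisson stream to these disjoint intervals yields independent counts, so a convolution in the number of Poisson points gives the contribution $\frac{\la}{\la+\mu+\theta+\al} \sum_{k=0}^{i} \rtwo{\la,\mu,\theta}{i-k}{\al}\,\rtwo{\la,\mu,\theta}{k}{\al}$. Summing the three contributions yields~\eqref{eqn:recursion_busy_period_Poisson_points}.

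The only delicate point is justifying the independent Poisson-count convolution in the arrival branch: one must argue that, after time $T$, the restrictions of $\{\La_\theta(t)\}$ to the two successive sub-busy-periods are independent copies of $\La_\theta(B_{\la,\mu})$. This follows from the strong Markov property applied at $T$ and at $T+B_1$ to the joint Markov process $(X,\La_\theta)$, combined with the fact that a Poisson process restarted at a stopping time is again a Poisson process of the same rate, independent of the past; everything else is bookkeeping.
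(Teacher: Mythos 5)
Your proof is correct and follows essentially the same route as the paper's: a one-step (first-transition) analysis on the joint process of the $M/M/1$ queue and the independent Poisson clearing stream, combined with the strong Markov property and the i.i.d.\ decomposition of the busy period started from two customers. You are somewhat more explicit than the paper — in particular you spell out that the service-completion branch contributes nothing when $i \ge 1$, and you flag the independence argument needed for the Poisson-count convolution — but the underlying argument is the same.
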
%

\begin{proof}%
Conditioning on the length of the busy period, we have for $i = 0$,
\begin{align}%
\rtwo{\la,\mu,\theta}{0}{\al} &= \E{ 1 }{ \euler^{-\al B_{\la,\mu}} \ind{ \La_{\theta}(B_{\la,\mu}) = 0} } = \E{ 1 }{ \euler^{-\al B_{\la,\mu}} \ind{ B_{\la,\mu} < E_\theta } } \notag \\
&= \int_0^\infty \Prob{t < E_\theta} \euler^{-\al t} f_{B_{\la,\mu}}(t) \, \dinf t = \phi_{\la,\mu}(\theta + \al).
\end{align}%
The recursion for $i \ge 1$ follows from a one-step analysis and the strong Markov property. Since the birth--and--death process starts with 1 customer, the first event occurs after $E_{\la + \mu + \theta}$ time and is either an arrival according to the Poisson process with probability $\theta/(\la + \mu + \theta)$ or an arrival of an additional customer with probability $\la/(\la + \mu + \theta)$. If the former occurs, due to the strong Markov property, one less Poisson point needs to arrive. If the latter occurs, exactly $i$ Poisson points need to arrive in two busy periods (due to the homogeneous structure of the birth--and--death process). This reasoning establishes the recursion \eqref{eqn:recursion_busy_period_Poisson_points}.
\end{proof}%

\begin{lemma}\label{lem:recursion_busy_period_Poisson_points_solution}%
The $\{ \rtwo{\la,\mu,\theta}{i}{\al} \}_{i \ge 0}$ of \textup{Lemma~\ref{lem:recursion_busy_period_Poisson_points}} are given by $\rtwo{\la,\mu,\theta}{0}{\al} = \phi_{\la,\mu}(\theta + \al)$ and for $i \ge 1$,
\begin{equation}%
\rtwo{\la,\mu,\theta}{i}{\al} = \Rtwo_1^i \phi_{\la,\mu}(\theta + \al) \sum_{k = 0}^{i - 1} C_k \binom{i - 1 + k}{i - 1 - k} \Rtwo_2^k = \Rtwo_1^i \phi_{\la,\mu}(\theta + \al) b_{i - 1}(W_2), \label{eqn:recursion_busy_period_Poisson_points_solution}
\end{equation}%
where $C_k \defi \frac{1}{k + 1} \binom{2k}{k}$ are the Catalan numbers, $b_K(z)$ is defined in \textup{Lemma~\ref{lem:binomial_identity_mixing_moments}}, and
\begin{equation}%
\Rtwo_1 = \frac{\theta}{\la(1 - 2\phi_{\la,\mu}(\theta + \al)) + \mu + \theta + \al}, \quad \Rtwo_2 = \frac{\la \phi_{\la,\mu}(\theta + \al)}{\la(1 - 2\phi_{\la,\mu}(\theta + \al)) + \mu + \theta + \al}.
\end{equation}%
\end{lemma}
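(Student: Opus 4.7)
The plan is a proof by induction on $i$, using the recursion \eqref{eqn:recursion_busy_period_Poisson_points} from Lemma~\ref{lem:recursion_busy_period_Poisson_points} together with the convolution-type recursion \eqref{eqn:recursion_mixing_moments} for $b_{K}(\cdot)$ from Lemma~\ref{lem:binomial_identity_mixing_moments}. The equivalence of the two expressions on the right-hand side of \eqref{eqn:recursion_busy_period_Poisson_points_solution} is immediate from the definition of $b_{K}(z)$ in \eqref{eqn:explicit_expression_mixing_moments} with $K = i - 1$, so the real content is to verify the first expression.

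To set up the induction, I would first isolate the $w^{(i)}$ and $w^{(0)}$ contributions from the quadratic term in \eqref{eqn:recursion_busy_period_Poisson_points}. Writing $\phi \defi \phi_{\la,\mu}(\theta + \al)$ and using $w^{(0)} = \phi$, the identity
\begin{equation*}
\sum_{k = 0}^{i} w^{(i - k)} w^{(k)} = 2\phi \, w^{(i)} + \sum_{k = 1}^{i - 1} w^{(i - k)} w^{(k)}
\end{equation*}
lets me collect the $w^{(i)}$ terms on the left, yielding
\begin{equation*}
\bigl( \la(1 - 2\phi) + \mu + \theta + \al \bigr) w^{(i)} = \theta \, w^{(i - 1)} + \la \sum_{k = 1}^{i - 1} w^{(i - k)} w^{(k)}.
\end{equation*}
Dividing through by the bracketed factor $D \defi \la(1 - 2\phi) + \mu + \theta + \al$ and by $\phi$, and defining $\tilde{w}^{(i)} \defi w^{(i)}/\phi$ (so $\tilde{w}^{(0)} = 1$), I obtain the clean recursion
\begin{equation*}
\tilde{w}^{(i)} = W_1 \tilde{w}^{(i - 1)} + W_2 \sum_{k = 1}^{i - 1} \tilde{w}^{(i - k)} \tilde{w}^{(k)}, \qquad i \ge 1,
\end{equation*}
which is exactly the form that matches \eqref{eqn:recursion_mixing_moments}.

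For the induction, I would check the base case $i = 1$ directly: the recursion gives $\tilde{w}^{(1)} = W_1$, which agrees with $W_1^1 \, b_0(W_2) = W_1 \cdot 1$. Assuming the claim for all indices $1,\ldots,i - 1$, I substitute $\tilde{w}^{(j)} = W_1^j b_{j - 1}(W_2)$ into the right-hand side to obtain
\begin{equation*}
\tilde{w}^{(i)} = W_1^i \Bigl( b_{i - 2}(W_2) + W_2 \sum_{k = 1}^{i - 1} b_{i - k - 1}(W_2) \, b_{k - 1}(W_2) \Bigr).
\end{equation*}
After reindexing the inner sum via $l = k - 1$, the parenthesized expression becomes $b_{K}(W_2) + W_2 \sum_{l = 0}^{K} b_{l}(W_2) b_{K - l}(W_2)$ with $K = i - 2$, which by \eqref{eqn:recursion_mixing_moments} equals $b_{K + 1}(W_2) = b_{i - 1}(W_2)$. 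Multiplying back by $\phi$ then gives \eqref{eqn:recursion_busy_period_Poisson_points_solution}, completing the induction.

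The only delicate step is being careful with the bookkeeping when the quadratic convolution term in \eqref{eqn:recursion_busy_period_Poisson_points} is split into boundary contributions (which contribute to $W_1$, $W_2$ via the denominator $D$) and the interior convolution, so that the latter lines up precisely with the $b_K$ recursion. Once that is done, no further calculation is needed: the two forms in \eqref{eqn:recursion_busy_period_Poisson_points_solution} match by definition of $b_{i - 1}$, and the result follows.
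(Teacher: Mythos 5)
Your proof is correct and follows essentially the same approach as the paper: both isolate the $2\phi\,w^{(i)}$ boundary terms from the convolution, normalize (the paper by dividing the substituted identity by $W_1^{i+1}\phi$, you by working with $\tilde{w}^{(i)} = w^{(i)}/\phi$ from the start), and close the argument by matching the resulting recursion with \eqref{eqn:recursion_mixing_moments} from Lemma~\ref{lem:binomial_identity_mixing_moments}. You phrase this as an explicit induction while the paper phrases it as a direct verification that the claimed formula solves the simplified recursion, but these are the same argument.
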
%

\begin{proof}%
For brevity, define $\rtwo{i} \defi \rtwo{\la,\mu,\theta}{i}{\al}$. Rewrite \eqref{eqn:recursion_busy_period_Poisson_points} as
\begin{equation}%
(\la + \mu + \theta + \al) \rtwo{i + 1} = \theta \rtwo{i} + \la \Bigl( \sum_{k = 1}^{i} \rtwo{i + 1 - k} \rtwo{k} + 2 \rtwo{0} \rtwo{i + 1} \Bigr).
\end{equation}%
Using $\rtwo{0} = \phi_{\la,\mu}(\theta + \al)$, this reduces to
\begin{equation}%
\rtwo{i + 1} = \Rtwo_1 \rtwo{i} + \frac{\Rtwo_2}{\phi_{\la,\mu}(\theta + \al)} \sum_{k = 1}^{i} \rtwo{i + 1 - k} \rtwo{k}. \label{eqn:recursion_busy_period_Poisson_points_simplified}
\end{equation}%
Straightforwardly substituting \eqref{eqn:recursion_busy_period_Poisson_points_solution} into \eqref{eqn:recursion_busy_period_Poisson_points_simplified} and dividing by $\Rtwo_1^{i + 1} \phi_{\la,\mu}(\theta + \al)$ results in
\begin{equation}%
b_i(\Rtwo_2) = b_{i - 1}(\Rtwo_2) + \Rtwo_2 \sum_{k = 1}^{i} b_{i - k}(\Rtwo_2) \, b_{k - 1}(\Rtwo_2).
\end{equation}%
Now, change the summation index by setting $l = k - 1$ to retrieve
\begin{equation}%
b_i(\Rtwo_2) = b_{i - 1}(\Rtwo_2) + \Rtwo_2 \sum_{l = 0}^{i - 1} b_{i - 1 - l}(\Rtwo_2) \, b_l(\Rtwo_2),
\end{equation}%
so that Lemma~\ref{lem:binomial_identity_mixing_moments} proves the claim \eqref{eqn:recursion_busy_period_Poisson_points_solution}.
\end{proof}%

\begin{lemma}\label{lem:busy_periods_Poisson_points_generating_function}%
The generating function of the $\{ \rtwo{\la,\mu,\theta}{i}{\al} \}_{i \ge 0}$ is, for $|z| < 1$,
\begin{equation}%
\sum_{i = 0}^\infty \rtwo{\la,\mu,\theta}{i}{\al} \, z^i = \phi_{\la,\mu}(\theta(1 - z) + \al).
\end{equation}%
\end{lemma}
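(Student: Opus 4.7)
The plan is to evaluate the generating function directly from the definition of $w^{(i)}_{\la,\mu,\theta}(\al)$ given in \eqref{eqn:definition_busy_period_Poisson_points}, using nothing more than Fubini (to interchange the sum and the expectation) and the probability generating function of a Poisson random variable. The key observation is that $\Lambda_\theta$ is assumed independent of $B_{\la,\mu}$, so conditional on $B_{\la,\mu}$, $\Lambda_\theta(B_{\la,\mu})$ is simply a Poisson random variable.

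First I would write
\begin{equation*}
\sum_{i=0}^\infty \rtwo{\la,\mu,\theta}{i}{\al}\, z^i = \sum_{i=0}^\infty z^i\, \E{1}{\euler^{-\al B_{\la,\mu}} \ind{\La_\theta(B_{\la,\mu}) = i}}.
\end{equation*}
For $|z| < 1$ the summands are dominated in absolute value by $\E{1}{\euler^{-\RealPart{\al} B_{\la,\mu}} \ind{\La_\theta(B_{\la,\mu}) = i}}$, which sum to $\E{1}{\euler^{-\RealPart{\al} B_{\la,\mu}}} < \infty$ for $\al \in \Complex_+$, so Fubini's theorem applies and yields
\begin{equation*}
\sum_{i=0}^\infty \rtwo{\la,\mu,\theta}{i}{\al}\, z^i = \Efxd{1}{\euler^{-\al B_{\la,\mu}} \sum_{i=0}^\infty z^i \ind{\La_\theta(B_{\la,\mu}) = i}} = \Efxd{1}{\euler^{-\al B_{\la,\mu}}\, z^{\La_\theta(B_{\la,\mu})}}.
\end{equation*}

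Next I would condition on $B_{\la,\mu}$ and exploit that $\La_\theta$ is an independent Poisson process of rate $\theta$: given $B_{\la,\mu} = t$, the variable $\La_\theta(t)$ has a Poisson distribution with mean $\theta t$, whose probability generating function is $\E{z^{\La_\theta(t)}} = \euler^{-\theta t (1 - z)}$. Therefore
\begin{equation*}
\Efxd{1}{\euler^{-\al B_{\la,\mu}}\, z^{\La_\theta(B_{\la,\mu})}} = \Efxd{1}{\euler^{-\al B_{\la,\mu}} \euler^{-\theta(1 - z) B_{\la,\mu}}} = \Efxd{1}{\euler^{-(\al + \theta(1 - z)) B_{\la,\mu}}},
\end{equation*}
which by the definition of $\phi_{\la,\mu}$ in \eqref{eqn:busy_period_number_departures_MM1_LST} equals $\phi_{\la,\mu}(\theta(1 - z) + \al)$, as claimed.

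There is no real obstacle: the only subtlety is checking that the argument $\al + \theta(1 - z)$ lies in the domain where $\phi_{\la,\mu}$ is well-defined, which is ensured by $\al \in \Complex_+$ and $|z| < 1$ since then $\RealPart{\al + \theta(1 - z)} = \RealPart{\al} + \theta(1 - \RealPart{z}) > 0$. One could alternatively verify the identity by substituting the explicit expression from Lemma~\ref{lem:recursion_busy_period_Poisson_points_solution} and summing a series involving the $b_K$ coefficients, but the direct probabilistic argument above is considerably shorter and avoids the combinatorial machinery entirely.
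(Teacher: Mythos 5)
Your proof is correct and follows essentially the same route as the paper: interchange the sum and expectation, recognize the resulting expression as $\Efxd{1}{\euler^{-\al B_{\la,\mu}} z^{\La_\theta(B_{\la,\mu})}}$, condition on $B_{\la,\mu}$, and use the Poisson probability generating function to obtain $\phi_{\la,\mu}(\theta(1-z)+\al)$. Your explicit justification of Fubini and the domain check for $\phi_{\la,\mu}$ are welcome additions that the paper leaves implicit.
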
%

\begin{proof}%
We use the definition of $\rtwo{\la,\mu,\theta}{i}{\al}$ in Lemma~\ref{lem:recursion_busy_period_Poisson_points} and condition on the length of the busy period:
\begin{align}%
\sum_{i = 0}^\infty \rtwo{\la,\mu,\theta}{i}{\al} \, z^i &= \sum_{i = 0}^\infty \Efxd{ 1 }{ \euler^{-\al B_{\la,\mu}} \ind{ \La_\theta(B_{\la,\mu}) = i } } z^i \notag \\
&= \sum_{i = 0}^\infty \Efxd{ 1 }{ \euler^{-\al B_{\la,\mu}} \ind{ \La_\theta(B_{\la,\mu}) = i } z^i } \notag \\
&= \Efxd{ 1 }{ \euler^{-\al B_{\la,\mu}} z^{\La_\theta(B_{\la,\mu})} } \notag \\
&= \int_0^\infty \Efxd{ 1 }{ \euler^{-\al B_{\la,\mu}} z^{\La_\theta(B_{\la,\mu})} \mid B_{\la,\mu} = t} f_{B_{\la,\mu}}(t) \, \dinf t \notag \\
&= \int_0^\infty \euler^{-\al t} \Efxd{ z^{\La_\theta(t)} } f_{B_{\la,\mu}}(t) \, \dinf t \notag \\
&= \int_0^\infty \euler^{-(\theta(1 - z) + \al)t} f_{B_{\la,\mu}}(t) \, \dinf t \notag \\
&= \phi_{\la,\mu}(\theta(1 - z) + \al),
\end{align}%
where we used the probability generating function of a Poisson distribution with parameter $\theta t$.
\end{proof}%


%
\end{document}%